\newcommand{\N}{\mathbb{N}}
 \newcommand{\sym}{\mathfrak{S}}
 \newcommand{\alt}{\mathfrak{A}}
 \newcommand{\Z}{\mathbb{Z}}
 \newcommand{\IBr}{\operatorname{IBr}}
  \newcommand{\Ind}{\operatorname{Ind}}
  \newcommand{\Res}{\operatorname{Res}}
  \newcommand{\BB}{{B}}
\newtheorem{Th}{Theorem}[]
\newtheorem{lemma}[Th]{Lemma}
\newtheorem{Prop}[Th]{Proposition}
\theoremstyle{remark}
\newtheorem{Rem}[Th]{Remark}{\rmfamily}
\theoremstyle{definition}
\newtheorem{Def}[Th]{Definition}{\rmfamily}
\newtheorem{exa}[Th]{Example}{\rmfamily}
\newtheorem{Hyp}[Th]{Hypothesis}{\rmfamily}
\newtheorem{Not}[Th]{Notation}{\rmfamily}
\def\Z{\mathbb{Z}}
\newcommand{\Irr}{\operatorname{Irr}}
\newcommand\blfootnote[1]{%
  \begingroup
  \renewcommand\thefootnote{}\footnote{#1}%
  \addtocounter{footnote}{-1}%
  \endgroup
}
\begin{document}

\title{On unitriangular basic sets for symmetric and alternating groups}
\author{Olivier Brunat, Jean-Baptiste Gramain and Nicolas Jacon}
\maketitle
\date{}
\blfootnote{\textup{2020} \textit{Mathematics Subject Classification}: \textup{20C20;20C30}} 
\begin{abstract}
    We study the modular representation theory of the symmetric and
alternating groups. One of the most natural ways to label the irreducible
representations of a given group or algebra in the modular case is to show
the unitriangularity of the decomposition matrices, that is, the existence
of a unitriangular basic set. We study several ways to obtain such sets in the general case of a symmetric algebra. 
We apply our  results to  the symmetric groups and to their Hecke algebras and thus obtain new ways to label  
  the simple modules 
 for these objects.
 Finally, 
we show that these sets do not always
exist in the case of the alternating groups by studying   two
explicit cases in
 characteristic $3$. 
\end{abstract}

\section{Introduction}

   One of the major problems in the  representation theory of finite groups
is to find a natural classification of the set of simple modules, in
particular in the modular case. This problem may be attacked using the
notion of unitriangular basic sets in the wider context of symmetric
algebras.  Let $A$ be an integral domain and let $\mathcal{H}$ be an
associative symmetric $A$-algebra, finitely generated and free over $A$
(see \cite[Def.  7.1.1]{GP}). A standard example of such structure is the
group algebra of a finite group. Another example is the Iwahori-Hecke
algebra of a finite Weyl group. Let $\theta: A \to L$ be a ring
homomorphism into a field $L$ such that $L$ is the field of fractions of
$\theta (A)$.
   We obtain an $L$-algebra $L\mathcal{H}:=L\otimes_A \mathcal{H}$, where
$L$ is regarded as an $A$-module via $\theta$. We assume that
$L\mathcal{H}$ is split. Our problem is then to describe the set
$\operatorname{Irr}(L\mathcal{H})$ of isomorphism classes of simple
$L\mathcal{H}$-modules. Note that when $\mathcal{H}$ is a group algebra of
a finite group and $L$ is a sufficiently large field of characteristic $p$
dividing the order of the group, this problem coincides with that of
finding the irreducible modular representations of the finite group.

    A useful tool in this setting is the decomposition matrix. Let $K$ be
the field of fractions of $A$ and write $K\mathcal{H}:=K\otimes_A
\mathcal{H}$. We assume that $A$ is integrally closed in $K$ and that $K
\mathcal{H}$ is split semisimple. Assume that we get a classification of
the simple $K \mathcal{H}$-modules
\begin{equation}
\label{eq:notirr}  
 \operatorname{Irr}(K\mathcal{H})=\{ V^{\lambda}\ |\ \lambda \in \Lambda\}
\end{equation}
for some labeling set $\Lambda$. The decomposition map is then the map (see
\cite[Theorem 7.4.3]{GP})
$$d_{\theta}: R_0 (K \mathcal{H}) \to R_0 (L  \mathcal{H})$$
between the associated Grothendieck groups of finite dimensional modules.
In particular, for all $\lambda \in \Lambda$ and
$M\in\operatorname{Irr}(L \mathcal{H})$, there are uniquely determined non-negative
integers $d_{\lambda,M}$ such that
\begin{equation}
\label{eq:decmat}
d_{\theta}([V^{\lambda}])=\sum_{M\in \operatorname{Irr} (L \mathcal{H})} d_{\lambda, M}[M].
\end{equation}
    We then get the corresponding \emph{decomposition matrix} $(d_{\lambda,
M})_{\lambda \in \Lambda, M\in \operatorname{Irr}(L \mathcal{H})}$, which controls the
representation theory of $L \mathcal{H}$. 
  
\begin{Def}
A {\it unitriangular basic set for} $( \mathcal{H},\theta)$ is the datum of a triplet
$(\BB,\leq,\Psi)$ where $\BB\subset \Lambda$, $\leq$ is a total order
defined on $\Lambda$ and $\Psi$ is a bijective map: $$\Psi: \BB \to
\operatorname{Irr} (L\mathcal{H})$$ satisfying:
\begin{enumerate}
\item for all $\lambda \in \BB$, we have $d_{\lambda,\Psi (\lambda)}=1$,
\item for all $M\in \operatorname{Irr} (L \mathcal{H})$ and $\lambda\in \Lambda$, we
have $d_{\lambda,M}=0$ unless $\lambda \leq \Psi^{-1} (M)$.
\end{enumerate}
\end{Def}
    The existence of such a set can be helpful in order to solve our main
problem. Indeed, we first remark that 
$$ \operatorname{Irr} (L \mathcal{H})=\{\Psi(\lambda)  \ |\ \lambda \in
\BB\},$$
and the unitriangularity implies that the rows (labeled by $\Lambda$) and
columns (labeled by $\operatorname{Irr} (L \mathcal{H})$) of the associated
decomposition matrix can be ordered to get a unitriangular shape as
follows:

    {\tikzstyle{every picture}+=[remember picture]
\renewcommand{\arraystretch}{1.2}
\renewcommand{\arraycolsep}{2pt}
$$\begin{array}{c|p{1pt}ccccccp{1pt}|}
\cline{2-2}
\cline{9-9}
&&1&0&\cdots&0&\cdots&0&\\
&&*&\ddots&\ddots&\vdots&&\vdots&\\
&&*&&&1&&\vdots&\\
&&*&&&&&1&\\
&&*&&&&&*&\\
&&\vdots&&&&&\vdots&\\
\cline{2-2}
\cline{9-9}
\end{array}.
$$}
\medskip

    This then gives a natural and canonical way to label the simple 
modules of $L \mathcal{H}$ by the set $\BB$. Thus, it is an important problem to
construct, if possible, unitriangular basic sets associated to a given
symmetric algebra. In this paper, we will mainly focus on the case where
$ \mathcal{H}$ is the group algebra of the alternating or symmetric group,
or the Hecke algebra of the symmetric group.  In the first two cases, the field $L$ will be 
 a field of positive characteristic and the decomposition matrix given above
 corresponds to the usual ``$p$-decomposition matrix''  for a finite group. 
\medskip

    In \cite{BG}, the first two authors have already shown the existence
of a weak version of unitriangular basic sets (simply called {\emph{basic sets}})
for alternating groups. It was expected that this result may be
strengthened by exhibiting unitriangular basic sets. In
Section~\ref{sec:alterne}, we will in fact show that this is not possible
in general. This result is surprising and unexpected, especially when we
compare with the case of the symmetric groups where it is well known that
unitriangular basic sets do exist \cite{JK}. As usual, the representations
of the alternating groups can be obtained from those of the symmetric
groups using Clifford theory. We will recall several aspects of this
theory in relation with these unitriangular basic sets, and in particular
deduce  a criterion for the existence of unitriangular basic sets for
alternating groups. 
    Then, using this, we deduce two counterexamples for the existence of
unitriangular basic sets for $n=18$ and $n=19$ and $p=3$ by two different
methods Furthermore, the
definition of unitriangular basic set may be refined using the well known
partition of the simple modules into $p$-blocks. In the cases of the
symmetric and alternating groups, we can attach to each $p$-block a
non-negative integer called its {\emph{$p$-weight}}. The structure of two
$p$-blocks with the same $p$-weight is very similar. For example, Chuang
and Rouquier \cite{CR} proved that two such $p$-blocks of symmetric groups
are derived equivalent, and, in~\cite{BG}, a weaker result for $p$-blocks
of the alternating groups with the same weight is shown. We can then
expect that two $p$-blocks of the alternating groups with the same
$p$-weight should have the same unitriangularity property together. But we
will show that this is not the case. 

    Despite our counterexamples for the alternating groups, one can ask in
which cases and for which types of blocks unitriangular basic sets exist.
To attack this problem, our strategy consists in focusing on the symmetric
group, where we will be interested in the existence of unitriangular basic
sets of a special kind. 

    There is also another (connected) motivation to search for these sets.
It is well known that the ordinary irreducible representations of the
symmetric group $\mathfrak{S}_n$ are naturally parametrized by the set
$\Pi^1 (n)$ of partitions of $n$, that is, the set of non-increasing
sequences of non-negative integers of total sum $n$. In addition, these
representations, or equivalently the simple $K \mathfrak{S}_n$-modules
(where $K$ is a field of characteristic $0$), can be constructed
explicitly thanks to the Specht module theory (for example) and we get
nice formulae for their dimensions. For $\lambda\in \Pi^1 (n)$, we write
$S^{\lambda}$ for the corresponding simple $K\mathfrak S_n$-module. We
have
\begin{equation}
\label{eq:sym}
\operatorname{Irr}(K \mathfrak{S}_n)=\{ S^{\lambda} \ |\ \lambda \in 
      \Pi^1 (n)\}.
\end{equation}
    When $L$ is a field of characteristic $p>0$, there is also a canonical
way to label the simple $L\mathfrak{S}_n$-modules by a certain subset $\operatorname{Reg}_p (n)$ of
partitions, called the {\emph{$p$-regular
partitions}}. These are the partitions in $\Pi^1 (n)$ which do not have $p$ or more
parts of the same positive size. A complete set of simple
$L\mathfrak{S}_n$-modules is then obtained as certain non zero quotients
of the Specht modules:
$$\operatorname{Irr} (L\mathfrak{S}_n)=\{ D^{\lambda} \ |\ \lambda \in 
  \operatorname{Reg}_p (n) \}.$$
  
    In characteristic zero, the one-dimensional representations of the
symmetric group are naturally labeled by the partition $(n)$ (for the
trivial representation) and its conjugate partition $(\underbrace{1,
\ldots, 1}_{n \text{ times}})$ (for the sign representation). In positive
characteristic $p>2$, the trivial representation is still labeled by the
partition $(n)$ but the partition $(\underbrace{1,\ldots,1}_{n \text{
times}})$ is not $p$-regular in general so it cannot label the sign
representation. In fact, the problem of finding which $p$-regular
partition labels the sign representation is a particular case of a problem
raised by Mullineux in \cite{Mu}. The problem is the following. If
$\lambda$ is a partition, then it is well known that tensoring the simple
module $S^{\lambda}$ by the sign representation leads to a simple module
isomorphic to $S^{\lambda'}$ where $\lambda'$ is the conjugate of
$\lambda$. This construction still makes sense in positive characteristic
for the simple modules $D^{\lambda}$ indexed by the $p$-regular
partitions. However, the analogue of the conjugate partition is here more
difficult to obtain. In fact, we now know that it can be described by
several non-trivial recursive algorithms \cite{K,FK,X} (the first one
having been conjectured by Mullineux \cite{Mu}).

    It is a natural question to ask if one can obtain a classification of
the simple modules for which the tensor product by the sign representation is
easier to describe. One of our results will be  to give an answer to this
problem using  the theory of unitriangular basic sets.\medskip

    Let us now explain in detail the organization of the paper. First, in
Section~\ref{sec:indice2}, we consider the general situation where
$G$ is a finite group, $H$ is an index two subgroup of $G$ and $p$ is an
odd prime number dividing the order of $G$. We denote by
$\varepsilon:G\rightarrow \{-1,1\}$ the linear character of $G$ obtained
by inflating the faithful linear character of $G/H$ to $G$, and by
$\sigma:H\rightarrow H,\,g\mapsto xgx^{-1}$ an automorphism of $H$,
where $x\in G$ is a fixed element with $x\notin H$. In this context, we
will give relations between unitriangular $p$-basic sets of $G$ and of $H$.
More precisely, we will show that if $\mathfrak B$ is a union of
$p$-blocks that covers a $\sigma$-stable union $\mathfrak b$ of $p$-blocks
of $H$ such that $\mathfrak b$ has a unitriangular $p$-basic set $b$, then
there is a unitriangular $p$-basic set $(B,\leq,\Theta)$ of $\mathfrak B$
satisfying the following two conditions:
\begin{enumerate}[(i)]
\item The set $B$ is $\varepsilon$-stable.
\item The map $\Theta:B\rightarrow \IBr_p(\mathfrak B)$ is
$\varepsilon$-equivariant, where $\IBr_p(\mathfrak B)$ is the set of irreducible
Brauer characters of $\mathfrak B$.
\end{enumerate}
Furthermore, a unitriangular $p$-basic set of $\mathfrak B$ that satisfies
these properties restricts to a unitriangular $p$-basic set of
$\mathfrak b$. See Theorem~\ref{thm:basicGH}, Theorem~\ref{thm:basicHG}
and Remark~\ref{rk:equivalence}.

    In Section~\ref{sec:alterne},  we  study in detail the case of the
symmetric and alternating groups. We begin by applying our previous
results to this case. Then, in~\S\ref{subsec:contreexemples}, we give two
counterexamples for the existence of a unitriangular basic set for the
alternating groups, by showing that the principal $3$-blocks of $\alt_{18}$
and $\alt_{19}$ have no unitriangular $3$-basic set. 

    The aim of the last section is to give a general procedure to produce
unitriangular basic sets in the case of the symmetric and alternating groups
when this is possible. 
    In fact, we consider in Section~\ref{sec:symalgebras} this problem in
the more general setting of symmetric algebras. We propose a procedure to
obtain from a given unitriangular basic set, new such sets with nice
additional properties. This part is quite elementary but should be of
independent interest. In Section~\ref{sec:application}, we give some
applications of the above result to the symmetric group and to a well
known unitriangular basic set for this group, or more generally to its
Hecke algebra. Even these resulting new unitriangular $p$-basic sets are
not completely stable by tensoring by the sign of $\sym_n$ in general, but
they {\emph{almost}} are. We then give a way to modify these sets to
obtain $p$-basic sets satisfying conditions (i) and (ii) above. 
    Then, finally, we apply these  results to prove that any $p$-block of
$\alt_n$ with an odd $p$-weight has a unitriangular $p$-basic set. We also
study explicitly an example which shows  that ``unitriangularity'' is
not an invariant of the $p$-weight of the $p$-blocks of the alternating
groups.

\section{Groups with an index two subgroup}
\label{sec:indice2}

    In this section, we study the relations between the basic sets of a
group with those of an index two subgroup. Of course, we can have in
mind the symmetric and the alternating group as a fundamental example.
Theorem \ref{thm:basicGH} shows how one can  obtain a unitriangular basic
set for $H$ from a particular unitriangular basic set for $G$.  Theorem
\ref{eq:propcool} gives a necessary condition for the existence of a
unitriangular basic set for $H$. Both results will be crucial in the rest
of the paper. 

\subsection{Setting}

    Let $G$ be a finite group and $p$ be a prime number dividing $|G|$.
Assume that $A$ is a valuation ring such that $p$ belongs to its maximal
ideal. Suppose that its field of fractions $K$ is a splitting field for
$G$ containing $A$.  Consider the canonical map $\theta:A\to L$, where $L$
is the residue field of $A$. In this case, $(K,A,L)$ is a modular system
for $G$ and $p$ large enough in the
following. To any $LG$-module $M$, we can associate its Brauer character
$\varphi_M$. It is an $A$-valued class function which vanishes on the set of
$p$-singular elements of $G$. We write $\IBr_p(G)$ for the set of Brauer characters of irreducible $LG$-modules, and $\Irr(G)$ for the set
of (ordinary) irreducible characters of $G$. We assume that $\Lambda$  is
an indexing set for $\Irr(G)$, so that we have $\Irr(G) =
\{\chi_{\lambda}\mid \lambda\in\Lambda\}$. Furthermore, for any
$\varphi\in K\Irr(G)$, we define
\begin{equation}
\label{eq:defchapeau}
\widehat\varphi(g)=
\begin{cases}
\varphi(g)&\text{if $g$ is a $p$-regular element,}
\\ 
0&\text{otherwise.}
\end{cases}
\end{equation}
Recall that $\widehat{\chi}\in\N\IBr_p(G)$ for all $\chi\in\Irr(G)$, and that
 \begin{equation}
\label{eq:relbasique}
 \widehat\chi_{\lambda}=\sum_{M\in\Irr(L\mathcal{H})}d_{\lambda,M}\,\varphi_{M}.
\end{equation}
    The numbers $d_{\lambda,M}$ are the $p$-decomposition numbers, and the
associated matrix $(d_{\lambda,M})_{\lambda \in \Lambda, M \in
\Irr(L\mathcal{H})}$ is the decomposition matrix. The number
$d_{\lambda,M}$ is also denoted by $d_{\chi,\varphi}$ when $\chi$ is the
irreducible character of $G$ labeled by $\lambda$ and $\varphi$ is the
Brauer character of the simple $L\mathcal{H}$-module $M$.

    Now, for $\varphi\in\IBr_p(G)$, the projective indecomposable
character corresponding to $\varphi$ is the ordinary character
$\Phi_\varphi$ defined by
\begin{equation}
\label{eq:pims}
\Phi_\varphi=\sum_{\chi\in\Irr(G)}d_{\chi,\varphi}\chi.
\end{equation}
    Write $\operatorname{IPr}_p(G)$ for the set of projective
indecomposable characters of $G$. The set $\N\operatorname{IPr}_p(G)$ is
the set of projective characters of $G$. On the other hand, recall that
$\Z\operatorname{IPr}_p(G)$ is the set of generalized characters of $G$
vanishing on the set of $p$-singular elements. Recall that
$\operatorname{IPr}_p(G)$ is the dual basis of $\IBr_p(G)$ with respect to
the hermitian scalar product $\langle\,,\,\rangle_G$ on $K\Irr(G)$, that
is, the unique $K$-basis of the subspace of $K\Irr(G)$ vanishing on
$p$-singular elements such that $\langle \Phi_\varphi , \vartheta\rangle_G
= \delta_{\varphi,\vartheta}$ for all $\varphi,\,\vartheta\in\IBr_p(G)$.

\begin{lemma}
\label{lemma:automorphisme} We keep the notation as above. For any
$\sigma\in\operatorname{Aut}(G)$ and $\varphi\in\IBr_p(G)$, we have
$${}^{\sigma}\Phi_{\varphi}=\Phi_{{}^{\sigma}\varphi}.$$
\end{lemma}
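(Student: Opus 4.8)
The plan is to prove the statement by unwinding the definition of the projective indecomposable character and using the fact that the action of $\operatorname{Aut}(G)$ on class functions permutes $\Irr(G)$ and $\IBr_p(G)$ compatibly with $p$-decomposition numbers. First I would recall that for $\sigma\in\operatorname{Aut}(G)$ and a class function $\psi$ on $G$, one sets ${}^{\sigma}\psi(g)=\psi(\sigma^{-1}(g))$, and that $\psi\mapsto{}^{\sigma}\psi$ is a ring automorphism of $K\Irr(G)$ preserving the scalar product $\langle\,,\,\rangle_G$; moreover it sends $\Irr(G)$ to itself and $\IBr_p(G)$ to itself (since $\sigma$ sends $p$-regular elements to $p$-regular elements, hence preserves the span of Brauer characters and sends irreducible Brauer characters to irreducible Brauer characters, as $\sigma$ twists simple $LG$-modules to simple $LG$-modules). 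In particular ${}^{\sigma}\varphi\in\IBr_p(G)$ whenever $\varphi\in\IBr_p(G)$, so the right-hand side $\Phi_{{}^{\sigma}\varphi}$ makes sense.

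Next I would establish the key equivariance of decomposition numbers: for all $\chi\in\Irr(G)$ and $\varphi\in\IBr_p(G)$, $d_{{}^{\sigma}\chi,{}^{\sigma}\varphi}=d_{\chi,\varphi}$. This follows because applying $\sigma$ to the defining relation $\widehat{\chi}=\sum_{\varphi}d_{\chi,\varphi}\,\varphi$ gives $\widehat{{}^{\sigma}\chi}={}^{\sigma}\widehat{\chi}=\sum_{\varphi}d_{\chi,\varphi}\,{}^{\sigma}\varphi$, where the first equality uses that $\sigma$ preserves the set of $p$-regular elements so that forming $\widehat{\,\cdot\,}$ commutes with the $\sigma$-action; comparing with $\widehat{{}^{\sigma}\chi}=\sum_{\psi\in\IBr_p(G)}d_{{}^{\sigma}\chi,\psi}\,\psi$ and using that $\{{}^{\sigma}\varphi\mid\varphi\in\IBr_p(G)\}=\IBr_p(G)$ yields $d_{{}^{\sigma}\chi,{}^{\sigma}\varphi}=d_{\chi,\varphi}$ by uniqueness of the expansion in the basis $\IBr_p(G)$.

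Finally I would compute directly from \eqref{eq:pims}:
\begin{equation*}
{}^{\sigma}\Phi_{\varphi}=\sum_{\chi\in\Irr(G)}d_{\chi,\varphi}\,{}^{\sigma}\chi
=\sum_{\chi\in\Irr(G)}d_{{}^{\sigma}\chi,{}^{\sigma}\varphi}\,{}^{\sigma}\chi
=\sum_{\chi'\in\Irr(G)}d_{\chi',{}^{\sigma}\varphi}\,\chi'
=\Phi_{{}^{\sigma}\varphi},
\end{equation*}
where the second equality uses the equivariance of decomposition numbers just proved and the third is the reindexing $\chi'={}^{\sigma}\chi$, which is a bijection of $\Irr(G)$. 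This gives the claim. Alternatively, one can avoid the decomposition-number bookkeeping by using the dual-basis characterization: $\Phi_{{}^{\sigma}\varphi}$ is the unique element of the span of $\IBr_p(G)$-supported class functions with $\langle\Phi_{{}^{\sigma}\varphi},\vartheta\rangle_G=\delta_{{}^{\sigma}\varphi,\vartheta}$ for all $\vartheta\in\IBr_p(G)$; since $\sigma$ preserves both the relevant subspace of $K\Irr(G)$ and the scalar product, ${}^{\sigma}\Phi_{\varphi}$ satisfies $\langle{}^{\sigma}\Phi_{\varphi},\vartheta\rangle_G=\langle\Phi_{\varphi},{}^{\sigma^{-1}}\vartheta\rangle_G=\delta_{\varphi,{}^{\sigma^{-1}}\vartheta}=\delta_{{}^{\sigma}\varphi,\vartheta}$, so it must equal $\Phi_{{}^{\sigma}\varphi}$. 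The only point requiring mild care — the ``main obstacle'', such as it is — is checking that the $\sigma$-action genuinely permutes $\IBr_p(G)$ and commutes with the hat operation; once that is in place the rest is a short formal manipulation.
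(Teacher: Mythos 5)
Your main argument is exactly the paper's: apply $\sigma$ to the relation $\widehat{\chi}=\sum_{\varphi}d_{\chi,\varphi}\varphi$, use that $\sigma$ commutes with the hat operation and permutes $\IBr_p(G)$ to get $d_{{}^{\sigma}\chi,{}^{\sigma}\varphi}=d_{\chi,\varphi}$, and then transport this through the definition of $\Phi_{\varphi}$ (you merely make the reindexing $\chi\mapsto{}^{\sigma}\chi$ explicit, which the paper leaves implicit). The proposal is correct, and your alternative dual-basis argument is a valid shortcut but not needed to match the paper's proof.
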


\begin{proof} Assume that we have $\sigma\in\operatorname{Aut}(G)$. 
    By Equation~(\ref{eq:relbasique}) applied to ${}^{\sigma} \widehat{
\chi}$ and $\widehat{\chi}$, and using that ${}^{\sigma} \widehat{
\chi}=\widehat{ {  }^{\sigma}\chi}$, we obtain
$$\sum_{\varphi\in\IBr_p(G)}d_{\chi,\varphi}{}^{\sigma}\varphi= 
{}^{\sigma}\widehat{\chi}=
\widehat{{}^{\sigma}\chi}=\sum_{\varphi\in\IBr_p(G)}
d_{{}^{\sigma}\chi,\varphi}\,\varphi=
\sum_{\varphi\in\IBr_p(G)}d_{{}^{\sigma}\chi,{}^{\sigma} \,
\varphi}{}^{\sigma}\varphi.$$
Hence, the uniqueness of the coefficients in a basis gives
\begin{equation}
\label{eq:coeffauto}
d_{\chi,\varphi}=d_{{}^{\sigma}\chi,{}^{\sigma}\varphi} 
\end{equation}
for all $\chi\in\Irr(G)$ and $\varphi\in\IBr_p(G)$. Furthermore, for
$\varphi\in\IBr_p(G)$, Equation~(\ref{eq:coeffauto}) gives
$${}^{\sigma}\Phi_{\varphi} = \sum_{ \chi\in\Irr(G)} d_{
\chi,\varphi}{}^{\sigma}\chi
=\sum_{\chi\in\Irr(G)}d_{{}^{\sigma}\chi,{}^{\sigma}\varphi}\,{}^{\sigma}\chi
=\Phi_{{}^{\sigma}\varphi},$$
as required.
\end{proof}

    The {\emph{$p$-blocks}} of ordinary characters of $G$ are the equivalence classes of the
following equivalence relation on $\Irr(G)$: for $\chi,\,\psi\in\Irr(G)$, the characters $\chi$
and $\psi$ are in relation if and only if there are
$\varphi_0,\ldots,\varphi_r\in\IBr_p(G)$ such that $\chi \in\Phi_{
\varphi_0}$, $ \psi\in\Phi_{\varphi_r}$ and, for each $0\leq i\leq r-1$,
$$\langle\Phi_{\varphi_i},\Phi_{\varphi_{i+1}}\rangle_G\neq 0.$$ 
    The set of Brauer characters appearing in those PIMs form a $p$-block
of Brauer characters. In the following, a $p$-block of $G$ can refer to a
subset of $\Irr(G)$ or of $\IBr_p(G)$ according to the context. It  will
be denoted by $\Irr(\mathfrak B)$ or $\IBr_p(\mathfrak B)$, or sometimes
only by $\mathfrak B$ if there is no possible confusion.

\begin{lemma}
Let $\sigma\in\operatorname{Aut}(G)$ and $p$ be a prime number. Let
$\mathfrak B$ be a $p$-block of $G$. Then 
$${}^{\sigma}\Irr(\mathfrak B)=\{{}^{\sigma}\chi\mid \chi\in\Irr(\mathfrak
B)\}\quad\text{and}\quad
{}^{\sigma}\IBr_p(\mathfrak B)=\{{}^{\sigma}\varphi\mid \varphi\in\IBr_p(\mathfrak
B)\}$$
is a $p$-block of $G$, denoted by ${}^{\sigma}\mathfrak B$.
\end{lemma}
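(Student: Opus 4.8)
The plan is to show that applying $\sigma$ to a $p$-block yields a set of characters which again forms a single equivalence class under the relation defining $p$-blocks. The essential point is that $\sigma$, being an automorphism of $G$, preserves all the structure used to define this equivalence relation: the constituent relation $\chi \in \Phi_\varphi$ and the non-vanishing of scalar products $\langle \Phi_{\varphi_i}, \Phi_{\varphi_{i+1}}\rangle_G$.

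First I would record the two basic compatibilities. By Lemma~\ref{lemma:automorphisme} we have ${}^{\sigma}\Phi_{\varphi} = \Phi_{{}^{\sigma}\varphi}$ for all $\varphi \in \IBr_p(G)$, so $\sigma$ permutes the set $\operatorname{IPr}_p(G)$ of projective indecomposable characters, matching the permutation it induces on $\IBr_p(G)$. Second, since $\sigma$ is an automorphism of $G$, the hermitian scalar product on $K\Irr(G)$ is $\sigma$-invariant: $\langle {}^{\sigma}\alpha, {}^{\sigma}\beta\rangle_G = \langle \alpha,\beta\rangle_G$ for any class functions $\alpha,\beta$. Also, for $\chi \in \Irr(G)$ the multiplicity of $\chi$ in $\Phi_\varphi$ equals that of ${}^{\sigma}\chi$ in ${}^{\sigma}\Phi_\varphi = \Phi_{{}^{\sigma}\varphi}$, by Equation~(\ref{eq:coeffauto}) in the proof of Lemma~\ref{lemma:automorphisme}. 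Hence $\chi \in \Phi_\varphi$ if and only if ${}^{\sigma}\chi \in \Phi_{{}^{\sigma}\varphi}$.

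Now let $\mathfrak B$ be a $p$-block and put $\mathfrak C = {}^{\sigma}\Irr(\mathfrak B)$. Given ${}^{\sigma}\chi, {}^{\sigma}\psi \in \mathfrak C$ with $\chi,\psi \in \Irr(\mathfrak B)$, choose a chain $\varphi_0,\ldots,\varphi_r \in \IBr_p(G)$ witnessing that $\chi$ and $\psi$ lie in the same block: $\chi \in \Phi_{\varphi_0}$, $\psi \in \Phi_{\varphi_r}$, and $\langle \Phi_{\varphi_i}, \Phi_{\varphi_{i+1}}\rangle_G \neq 0$ for each $i$. Applying $\sigma$ and using the two compatibilities above, the chain ${}^{\sigma}\varphi_0, \ldots, {}^{\sigma}\varphi_r$ witnesses that ${}^{\sigma}\chi$ and ${}^{\sigma}\psi$ lie in the same block: ${}^{\sigma}\chi \in \Phi_{{}^{\sigma}\varphi_0}$, ${}^{\sigma}\psi \in \Phi_{{}^{\sigma}\varphi_r}$, and $\langle \Phi_{{}^{\sigma}\varphi_i}, \Phi_{{}^{\sigma}\varphi_{i+1}}\rangle_G = \langle {}^{\sigma}\Phi_{\varphi_i}, {}^{\sigma}\Phi_{\varphi_{i+1}}\rangle_G = \langle \Phi_{\varphi_i}, \Phi_{\varphi_{i+1}}\rangle_G \neq 0$. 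So $\mathfrak C$ is contained in a single $p$-block. Conversely, the same argument applied to $\sigma^{-1}$ shows that this $p$-block is contained in $\mathfrak C$, so $\mathfrak C$ is exactly a $p$-block; I would then note that the associated Brauer-character block is precisely ${}^{\sigma}\IBr_p(\mathfrak B)$, since the Brauer characters of a block are those appearing in its PIMs and $\sigma$ permutes PIMs compatibly.

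This argument is essentially a bookkeeping check rather than something with a genuine obstacle; the only thing that requires a moment of care is the observation that $\sigma$-conjugation commutes with taking $\widehat{(\cdot)}$ (already used in the proof of Lemma~\ref{lemma:automorphisme}) and is an isometry for $\langle\,,\,\rangle_G$ — both of which are immediate because $\sigma$ permutes the conjugacy classes of $G$ and preserves $p$-regularity. One should also make explicit that the chain condition is symmetric, so running the argument for $\sigma$ and $\sigma^{-1}$ gives equality of sets, not merely containment.
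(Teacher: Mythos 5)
Your proof is correct and follows essentially the same route as the paper: invoke Lemma~\ref{lemma:automorphisme} to get ${}^{\sigma}\Phi_{\varphi}=\Phi_{{}^{\sigma}\varphi}$, use the $\sigma$-invariance of $\langle\,,\,\rangle_G$, and transport the defining chains of PIMs. The paper compresses the final bookkeeping (including the reverse containment via $\sigma^{-1}$ and the statement about Brauer characters) into ``the result follows,'' which you simply spell out more explicitly.
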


\begin{proof}
    First, we remark that, for any class functions $\alpha$ and $\beta$ of
$G$, we have
$$\langle \alpha,\beta \rangle_G=\langle
{}^{\sigma}\alpha,{}^{\sigma}\beta\rangle_G.$$
Now, $\chi$ and $\psi$ are in $\Irr(\mathfrak B)$ if and only if there are
$\varphi_0,\ldots,\varphi_r\in\IBr_p(G)$ such that $
\langle\Phi_{\varphi_i},\Phi_{\varphi_{i+1}}\rangle_G\neq 0$. Since
$$
\langle\Phi_{\varphi_i},\Phi_{\varphi_{i+1}}\rangle_G=
\langle {}^{\sigma}\Phi_{\varphi_i},{}^{\sigma}
\Phi_{\varphi_{i+1}}\rangle_G=
\langle \Phi_{{}^{\sigma}\varphi_i},
\Phi_{{}^{\sigma}\varphi_{i+1}}\rangle_G,$$
the result follows.
\end{proof}

\begin{Def}
\label{def:basicset}
With the above notation, a subset $B\subseteq\Irr(\mathfrak B)$ of a
$p$-block $\mathfrak B$ is a {\emph{$p$-basic set}} of $\mathfrak B$ if the set $\{\widehat{\chi} \mid \chi\in B\}$ is a $\Z$-basis of the $\Z$-module $\Z\IBr_p(\mathfrak
B)$. 
\end{Def}

\begin{Rem}
\label{rk:defbasic}
    Since the set $\{ \widehat{\chi} \mid \chi\in\Irr(\mathfrak B)\}$
generates over $\Z$ the module $\Z\IBr_p(\mathfrak B)$, a subset
$B\subseteq\Irr(\mathfrak B)$ is a $p$-basic set of the $p$-block
$\mathfrak B$ if and only if
\begin{itemize}
\item For any $\chi\in\Irr(\mathfrak B)$, $\widehat \chi$ is a $\Z$-linear
combination of the set $\widehat B=\{\widehat \psi\mid \psi \in B\}$.
\item The family $\widehat B$ is free.
\end{itemize}
\end{Rem}

\subsection{Decomposition matrix for index two subgroups}
\label{sub:decompositionindectwo}

    In this section, we keep the notation of the last section  and we
assume that $p$ is odd. The following result can be seen as an analogue of
Clifford theory for the projective indecomposable modules. 

\begin{Prop}
\label{prop:cliffpims}
Let $G$ be a finite group and $H$ be a subgroup of $G$ of index $2$. Let
$p$ be an odd prime number dividing $|G|$. Write $\varepsilon:G\rightarrow
\{-1,1\}$ for the surjective morphism with kernel $H$ induced by the
canonical projection $G\rightarrow G/H$. Then
\begin{equation}
\label{eq:permpim}
  \forall \varphi\in\IBr_p(G), \;  \varepsilon\otimes\Phi_\varphi=\Phi_{\varepsilon\otimes\varphi}.
\end{equation}
Moreover,
\begin{enumerate}
\item If $\varepsilon\otimes\varphi\neq\varphi$, then
$\Res_{H}^G(\Phi_\varphi) = \Res_H^G(\Phi_{\varepsilon\otimes\varphi})\in
\operatorname{IPr}_p(H)$. 
\item If $\varepsilon\otimes\varphi=\varphi$, then $\Phi_\varphi$ splits
into two projective indecomposable characters of $H$.
\end{enumerate}
All projective indecomposable characters of $H$ are obtained
by this process.
\end{Prop}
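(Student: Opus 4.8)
The plan is to combine the ordinary Clifford theory for characters under an index-two subgroup with its analogue for projective characters, using the duality between $\IBr_p$ and $\operatorname{IPr}_p$ with respect to $\langle\,,\,\rangle$. First I would establish the identity $\varepsilon\otimes\Phi_\varphi=\Phi_{\varepsilon\otimes\varphi}$: since $\varepsilon$ is a linear character of order $2$, the map $\chi\mapsto\varepsilon\otimes\chi$ is an automorphism of the based module $\Z\Irr(G)$ preserving $\langle\,,\,\rangle$, and tensoring by $\varepsilon$ permutes $\IBr_p(G)$ (because $\varepsilon$ is $p$-regular-supported and trivial on $p$-singular elements, $\widehat{\varepsilon\otimes\chi}=\varepsilon\otimes\widehat\chi$), so from $\widehat{\chi_\lambda}=\sum_M d_{\lambda,M}\varphi_M$ we get $d_{\chi,\varphi}=d_{\varepsilon\otimes\chi,\varepsilon\otimes\varphi}$; feeding this into the defining formula $(\ref{eq:pims})$ for $\Phi_\varphi$ gives the displayed equation. (This is the exact analogue, for the linear character $\varepsilon$ rather than an automorphism $\sigma$, of Lemma~\ref{lemma:automorphisme}, and could even be quoted by viewing $\varepsilon\otimes(-)$ appropriately; but the direct computation is cleanest.)

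Next I would treat the two cases. In case (1), $\varepsilon\otimes\varphi\neq\varphi$: then $\varphi$ and $\varepsilon\otimes\varphi$ are distinct irreducible Brauer characters of $G$, and by standard Clifford theory for Brauer characters $\Res_H^G\varphi=\Res_H^G(\varepsilon\otimes\varphi)$ is an irreducible Brauer character $\bar\varphi$ of $H$, and $\Ind_H^G\bar\varphi=\varphi+\varepsilon\otimes\varphi$. Dualizing: $\Res_H^G\Phi_\varphi$ is a projective character of $H$, and I compute its multiplicities against $\IBr_p(H)$. For any $\psi\in\IBr_p(H)$, Frobenius reciprocity gives $\langle\Res_H^G\Phi_\varphi,\psi\rangle_H=\langle\Phi_\varphi,\Ind_H^G\psi\rangle_G$. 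Now $\Ind_H^G\psi$ is either irreducible (equal to some $\eta$ with $\varepsilon\otimes\eta=\eta$) or splits as $\eta_1+\eta_2$ with $\varepsilon\otimes\eta_1=\eta_2$; either way, by the defining biorthogonality $\langle\Phi_\varphi,\eta\rangle_G=\delta_{\varphi,\eta}$, this pairing is nonzero exactly when one of the constituents of $\Ind_H^G\psi$ equals $\varphi$, i.e. exactly when $\psi=\bar\varphi$ (the unique irreducible $H$-constituent of $\Res_H^G\varphi$), and then it equals $1$. Hence $\Res_H^G\Phi_\varphi$ is the projective indecomposable character $\Phi_{\bar\varphi}$ of $H$; the same computation with $\varepsilon\otimes\varphi$ gives the same answer, which also follows from $\Res_H^G(\varepsilon\otimes\Phi_\varphi)=\Res_H^G\Phi_\varphi$.

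In case (2), $\varepsilon\otimes\varphi=\varphi$: then $\Res_H^G\varphi=\psi_1+\psi_2$ with $\psi_2={}^{\sigma}\psi_1\neq\psi_1$, and $\Ind_H^G\psi_i=\varphi$ for $i=1,2$. Running the same Frobenius-reciprocity computation, $\langle\Res_H^G\Phi_\varphi,\psi\rangle_H=\langle\Phi_\varphi,\Ind_H^G\psi\rangle_G$ is nonzero (and equal to $1$) precisely when $\varphi$ is a constituent of $\Ind_H^G\psi$, which happens exactly for $\psi\in\{\psi_1,\psi_2\}$. Therefore $\Res_H^G\Phi_\varphi=\Phi_{\psi_1}+\Phi_{\psi_2}$, a sum of two projective indecomposable characters of $H$ (distinct since $\psi_1\neq\psi_2$), proving the splitting. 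Finally, surjectivity: every $\psi\in\IBr_p(H)$ occurs in $\Res_H^G\varphi$ for some $\varphi\in\IBr_p(G)$ (Clifford), so by the above $\Phi_\psi$ is a constituent of $\Res_H^G\Phi_\varphi$, hence is obtained by the process; and $\{\Phi_\psi\mid\psi\in\IBr_p(H)\}$ is all of $\operatorname{IPr}_p(H)$ by definition. I expect the only real care-point to be making the Brauer-character Clifford theory for index two (the restriction/induction dichotomy for $\IBr_p$, and that $\varepsilon\otimes(-)$ implements the $\sigma$-action on $\IBr_p(H)$-orbits) watertight, since everything else is then formal biorthogonality; this is routine but worth stating precisely, perhaps with a reference.
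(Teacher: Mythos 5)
Your proposal is correct and follows essentially the same route as the paper: Clifford theory for Brauer characters in the index-two, $p$ odd situation, combined with Frobenius reciprocity and the duality between $\IBr_p$ and $\operatorname{IPr}_p$ to identify $\Res_H^G(\Phi_\varphi)$, and the same mechanism for surjectivity. The only (cosmetic) differences are that you derive~(\ref{eq:permpim}) from the identity $d_{\chi,\varphi}=d_{\varepsilon\otimes\chi,\varepsilon\otimes\varphi}$ and formula~(\ref{eq:pims}), in the spirit of Lemma~\ref{lemma:automorphisme}, whereas the paper uses uniqueness of the dual basis for $\langle\,,\,\rangle_G$ directly, and your closing surjectivity argument goes through the Clifford correspondence for $\IBr_p(H)$ rather than through $\Ind_H^G(\Psi_\alpha)\in\Z\operatorname{IPr}_p(G)$.
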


\begin{proof}
    First, we remark that $\varepsilon\otimes\varphi\in\IBr_p(G)$ for all
$\varphi\in\IBr_p(G)$. Furthermore, for $\varphi,\,\vartheta\in\IBr_p(G)$
we have
\begin{align*}
 \langle
\varepsilon\otimes\Phi_{\varphi},\varepsilon\otimes\vartheta\rangle_G&=\frac
1 {|G|}\sum_{g\in
G}\overline{\varepsilon(g)\Phi_\varphi(g)}\varepsilon(g)\vartheta(g)\\
&=\frac
1 {|G|}\sum_{g\in
G}\varepsilon(g)^2\overline{\Phi_\varphi(g)}\vartheta(g)\\
&=\langle\Phi_{\varphi},\vartheta\rangle_G\\
&=\delta_{\varphi,\vartheta}\\
&=\delta_{\varepsilon\otimes\varphi,\varepsilon\otimes\vartheta}\\
&=\langle\Phi_{\varepsilon\otimes\varphi},\varepsilon\otimes\vartheta\rangle_G.
\end{align*}
Hence, by uniqueness of the dual basis, we deduce that
$$\varepsilon\otimes\Phi_{\varphi}=\Phi_{\varepsilon\otimes\varphi}.$$
    Now, since $p$ is odd and $G/H$ is cyclic of order prime to $p$,
Clifford's theory for Brauer characters~\cite[Theorem 9.18]{hupp} can be
applied. We have
\begin{itemize}
\item If $\varepsilon\otimes\varphi\neq\varphi$, then
$\Res_H^G(\varphi)=\Res_H^G(\varepsilon\otimes\varphi)\in\IBr_p(H)$.  To
simplify the notation, we still denote by $\varphi$ the restriction of
$\varphi$ to $H$.  ~\cite[Theorem 9.8]{hupp} also gives that
\begin{equation} 
\label{eq:ind1}
\Ind_H^G(\varphi)=\varphi+\varepsilon\otimes\varphi. 
\end{equation}
\item If $\varepsilon\otimes\varphi=\varphi$, then
$\Res_H^G(\varphi)=\varphi^++\varphi^-$ with $\varphi^{\pm}\in\IBr_p(H)$,
and~\cite[Theorem 9.8]{hupp} again gives that 
\begin{equation}
\label{eq:ind2} 
\Ind_H^G(\varphi^+)=\Ind_H^G(\varphi^-)=\varphi.
\end{equation}
\end{itemize}
    Write $\operatorname{IPr}_p(H)=\{\Psi_\alpha\mid \alpha \in
\IBr_p(H)\}$. Let $\varphi \in \IBr_p(G)$. Since $\Res_H^G(\Phi_\varphi)$
vanishes on $p$-singular elements of $H$, it is a generalized projective
character of $H$. Thus, there are integers $a_{\alpha}$ for
$\alpha\in\IBr_p(H)$ such that 
$$\Res_H^G(\Phi_\varphi)=\sum_{\alpha\in\IBr_p(H)}a_\alpha
\Psi_\alpha.$$
Frobenius reciprocity gives that
\begin{align*}
a_{\alpha}=\langle \Res_H^G(\Phi_{\varphi}),\alpha\rangle_H
=\langle \Phi_{\varphi},\Ind_H^G(\alpha)\rangle_G,
\end{align*}
which is zero except for
$\alpha=\Res_{H}^G(\varphi)=\Res_{H}^G(\varepsilon\otimes\varphi)$ when
$\varepsilon\otimes\varphi\neq\varphi$ by~(\ref{eq:ind1}), and for
$\alpha=\varphi^\pm$ when $\varepsilon\otimes\varphi=\varphi$
by~(\ref{eq:ind2}). In these last cases, $a_{\alpha}=1$. Hence, we obtain
$$\Res_{H}^G(\Phi_{\varphi})=\Res_{H}^G(\varepsilon\otimes
\Phi_{\varphi})=\Psi_{\varphi}\quad\text{if
}\varepsilon\otimes\varphi\neq\varphi, 
$$
and
$$\Res_{H}^G(\Phi_{\varphi})=\Psi_{\varphi^+}+\Psi_{\varphi^-}\quad\text{if
}\varepsilon\otimes\varphi=\varphi, 
$$
as required. Finally, every projective indecomposable character of $H$ is
obtained by this process, because if $\Psi_{\alpha} \in
\operatorname{IPr}_p(H)$, then there is $\varphi\in\IBr_p(G)$ such that
$\langle \Psi_{\alpha},\Res_H^G(\Phi_\varphi)\rangle_H=\langle
\Ind_H^G(\Psi_{\alpha}),\Phi_\varphi\rangle_G\neq 0$ since
$\Ind_H^G(\Psi_{\alpha})\in\Z\operatorname{IPr}_p(G)$. The result follows.
\end{proof}

\begin{Rem}
    Note that, by Clifford theory, the constituents $\varphi^{\pm} \in
\IBr_p(H)$ of an $\varepsilon$-stable Brauer character
$\varphi\in\IBr_p(G)$ are $G$-conjugate, that is
$$\varphi^{+} = {}^{\sigma}\varphi^{-}$$ 
for some automorphism $\sigma\in\operatorname{Aut}(H)$ induced by an inner
automorphism of $G$. In particular, Lemma~\ref{lemma:automorphisme}
implies that
\begin{equation}
\label{eq:imageautopim}
{}^{\sigma}\Phi_{\varphi^+}=\Phi_{\varphi^-}.
\end{equation}
\end{Rem}

    By~(\ref{eq:defchapeau}), we have $\varepsilon \otimes
\widehat{\varphi} = \widehat{\varepsilon \otimes \varphi}$ for all class
functions $\varphi$ on $G$. Hence~(\ref{eq:relbasique}) gives
$d_{\varepsilon\otimes\chi,\varepsilon\otimes\varphi}=d_{\chi,\varphi}$
for any $\chi\in\Irr(G)$ and $\varphi\in\IBr_p(G)$. It follows that
$\varepsilon$ acts on the $p$-blocks of $G$. Let $\mathfrak B$ be a
$p$-block of $G$. We write $\Res_H^G(\mathfrak B)$ for the set of
constituents of the $\Res_H^G(\chi)$ for $\chi\in \mathfrak B$. The last
proposition gives information on the decomposition matrix of
$\Res_H^G(\mathfrak B)$ from that of $\mathfrak B$ as follows.

\begin{Prop}
\label{prop:restrictbloc}
Let $\mathfrak{B}$ be a $p$-block of $G$. Write $D_\mathfrak{B}$ for the
decomposition matrix of $\mathfrak{B}$.
\begin{enumerate}
\item If $\mathfrak{B}\neq\varepsilon(\mathfrak{B})$, then
$b=\Res_H^G(\mathfrak{B})=\Res_H^G(\varepsilon(\mathfrak{B}))$ is a
$p$-block of $H$, and the restriction of $D_\mathfrak{B}$ to $H$ (which is
equal to that of $D_{\varepsilon(\mathfrak{B})}$) is the decomposition
matrix of $b$.
\item If $\mathfrak{B}=\varepsilon(\mathfrak{B})$, then
$\Res_H^G(\mathfrak{B})$ is the sum of at most two $p$-blocks of $H$.
Moreover, if there is $\alpha\in \mathfrak{B}$ such that
$\alpha\neq\varepsilon\otimes\alpha$, then it is a single $p$-block whose
decomposition matrix has columns $\Psi_{\varphi}$ for $\varphi\in
\mathfrak{B}$ such that $\varphi\neq\varepsilon\otimes\varphi$ and
$\Psi_{\varphi^+}$ and $\Psi_{\varphi^-}$ otherwise.
\end{enumerate}
\end{Prop}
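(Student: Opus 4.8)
The plan is to deduce everything from Proposition~\ref{prop:cliffpims} and its relation to the decomposition numbers of $\mathfrak B$, combined with the description of the $p$-blocks of $H$ via the PIMs $\Psi_\alpha$. I would begin with case (1). Assume $\mathfrak B\neq\varepsilon(\mathfrak B)$. Since $\varepsilon$ acts on the blocks and $d_{\varepsilon\otimes\chi,\varepsilon\otimes\varphi}=d_{\chi,\varphi}$, the block $\varepsilon(\mathfrak B)$ has decomposition matrix obtained from $D_\mathfrak B$ by relabelling rows and columns by $\varepsilon\otimes(-)$; in particular no $\varphi\in\IBr_p(\mathfrak B)$ is $\varepsilon$-stable (otherwise $\varepsilon(\mathfrak B)=\mathfrak B$), so by Proposition~\ref{prop:cliffpims}(1) every such $\varphi$ restricts irreducibly to $H$ and $\Res_H^G(\varphi)=\Res_H^G(\varepsilon\otimes\varphi)\in\IBr_p(H)$. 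Using $\Res_H^G(\widehat{\chi_\lambda})=\widehat{\Res_H^G(\chi_\lambda)}$ together with~(\ref{eq:relbasique}), I would check that for $\chi\in\mathfrak B$ the Brauer characters of $H$ appearing in $\widehat{\Res_H^G\chi}$ are exactly those $\Res_H^G\varphi$ with $d_{\chi,\varphi}\neq 0$, with the same multiplicities; hence the decomposition numbers of $b=\Res_H^G(\mathfrak B)$ coincide with those of $\mathfrak B$ after identifying the rows $\{\chi,\varepsilon\otimes\chi\}$ and the columns $\{\varphi,\varepsilon\otimes\varphi\}$. That $b$ is actually a single $p$-block of $H$ follows from the PIM description: $\Res_H^G(\Phi_\varphi)=\Psi_{\Res_H^G\varphi}\in\operatorname{IPr}_p(H)$ by Proposition~\ref{prop:cliffpims}(1), and since $\langle\Psi_{\Res_H^G\varphi},\Psi_{\Res_H^G\vartheta}\rangle_H=\langle\Ind_H^G\Psi_{\Res_H^G\varphi},\Phi_\vartheta\rangle_G$, and $\Ind_H^G\Res_H^G\Phi_\varphi=\Phi_\varphi+\Phi_{\varepsilon\otimes\varphi}$, the block-linking graph on $\{\Res_H^G\varphi:\varphi\in\mathfrak B\}$ inherits connectivity from that of $\mathfrak B\cup\varepsilon(\mathfrak B)$; one checks no Brauer character of $H$ outside $b$ is linked to it, so $b$ is a union of $p$-blocks of $H$ that is in fact indecomposable, hence a single block.

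For case (2), assume $\mathfrak B=\varepsilon(\mathfrak B)$. Write $\mathfrak B=\mathfrak B_0\sqcup\mathfrak B_1$ where $\mathfrak B_0$ is the set of $\varepsilon$-stable $\varphi\in\IBr_p(\mathfrak B)$ and $\mathfrak B_1$ its complement (on which $\varepsilon\otimes(-)$ is a fixed-point-free involution). By Proposition~\ref{prop:cliffpims}, each $\varphi\in\mathfrak B_0$ contributes two PIMs $\Psi_{\varphi^\pm}$ of $H$ with $\Res_H^G\Phi_\varphi=\Psi_{\varphi^+}+\Psi_{\varphi^-}$, while each $\varphi\in\mathfrak B_1$ contributes $\Psi_{\Res_H^G\varphi}$, with $\varphi$ and $\varepsilon\otimes\varphi$ giving the same PIM. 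So the projective characters of $H$ appearing in $\Res_H^G(\mathfrak B)$ are $\{\Psi_{\varphi^+},\Psi_{\varphi^-}:\varphi\in\mathfrak B_0\}\cup\{\Psi_{\Res_H^G\varphi}:\varphi\in\mathfrak B_1\}$. To bound the number of $p$-blocks of $H$ they span: using again $\langle\Psi_\alpha,\Psi_\beta\rangle_H=\langle\Ind_H^G\Psi_\alpha,\Phi_\cdot\rangle_G$ and the induction formulas~(\ref{eq:ind1})–(\ref{eq:ind2}), together with~(\ref{eq:imageautopim}) (so $\Psi_{\varphi^+}$ and $\Psi_{\varphi^-}$ are $\sigma$-conjugate), I would show that the linking graph on these PIMs has at most two connected components, interchanged by $\sigma$; hence $\Res_H^G(\mathfrak B)$ is a sum of at most two $p$-blocks of $H$. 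Finally, if there is $\alpha\in\mathfrak B$ with $\alpha\neq\varepsilon\otimes\alpha$, i.e. $\mathfrak B_1\neq\emptyset$, pick such an $\alpha$; then $\Psi_{\Res_H^G\alpha}$ is $\sigma$-stable (it equals $\Psi_{\Res_H^G(\varepsilon\otimes\alpha)}$, and conjugation by $\sigma$ permutes the two constituents of a split PIM but fixes $\Res_H^G\alpha$), so it lies in a $\sigma$-stable block of $H$; since $\sigma$ swaps the (at most two) components, there can be only one, i.e. $b:=\Res_H^G(\mathfrak B)$ is a single $p$-block. Its columns are then $\Psi_\varphi$ for $\varphi\in\mathfrak B$ with $\varphi\neq\varepsilon\otimes\varphi$ (one column per $\varepsilon$-orbit) and $\Psi_{\varphi^+},\Psi_{\varphi^-}$ for $\varphi\in\mathfrak B$ with $\varphi=\varepsilon\otimes\varphi$, as claimed.

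I expect the main obstacle to be the bookkeeping in the block-connectivity argument: translating "two PIMs $\Phi_\varphi,\Phi_\vartheta$ of $G$ are linked" into a statement about their restrictions $\Res_H^G\Phi_\varphi,\Res_H^G\Phi_\vartheta$, keeping careful track of which constituents $\varphi^\pm$ get identified under $\Res_H^G$ and which get swapped by $\sigma$, and arguing cleanly that the resulting graph on $H$-PIMs has at most two components. The scalar-product computation $\langle\Psi_\alpha,\Psi_\beta\rangle_H=\langle\Ind_H^G\Psi_\alpha,\Phi_\beta'\rangle_G$ via Frobenius reciprocity (with $\Ind_H^G\Psi_\alpha\in\Z\operatorname{IPr}_p(G)$ expressed through~(\ref{eq:ind1})–(\ref{eq:ind2})) is the technical heart; once that is in place, the two-component bound and the collapse to one component when $\mathfrak B_1\neq\emptyset$ are formal. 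The statement about decomposition numbers being "equal to that of $D_\mathfrak B$" in case (1) and the column description in case (2) are then immediate from Proposition~\ref{prop:cliffpims} and~(\ref{eq:relbasique}) applied to $\Res_H^G\widehat{\chi}$.
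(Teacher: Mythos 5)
Your proposal is correct, and its skeleton is the same as the paper's: everything is driven by Proposition~\ref{prop:cliffpims}, the induction formulas~(\ref{eq:ind1})--(\ref{eq:ind2}), and Frobenius-reciprocity computations of $\langle\Psi_\alpha,\Psi_\beta\rangle_H$ translating linkings of PIMs of $G$ into linkings of their constituents over $H$; your case (1) (no $\varepsilon$-stable characters, irreducible restriction of rows and columns, decomposition numbers read off from $\Res_H^G\widehat\chi$ via~(\ref{eq:relbasique})) is the paper's argument in only slightly different words.

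The one place where you genuinely diverge is the endgame of case (2). The paper fixes the non-$\varepsilon$-stable $\alpha$, takes an arbitrary $\beta$ in $\mathfrak B$, and walks along a chain $\alpha=\alpha_0,\ldots,\alpha_r=\beta$ with $\langle\Phi_{\alpha_i},\Phi_{\alpha_{i+1}}\rangle_G\neq 0$, checking case by case (stable/non-stable at each step) that the constituents of consecutive restrictions stay linked in $H$, so everything lands in one block. You instead first organize the picture as ``the linking graph of $H$-PIMs under $\mathfrak B$ has at most two components, interchanged by $\sigma$'' and then note that a non-stable $\alpha$ produces a $\sigma$-stable PIM $\Psi_{\Res_H^G\alpha}$ (via Lemma~\ref{lemma:automorphisme}), which can only sit in a $\sigma$-stable component, forcing a single block. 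That symmetry/fixed-point argument is clean and buys a conceptual explanation of why non-stable characters glue the two halves together, whereas the paper's chain-tracing is more hands-on; but note that your ``at most two components swapped by $\sigma$'' step is exactly where the same scalar-product computations (and a small quotient-graph argument identifying $\Psi_{\varphi^+}$ with $\Psi_{\varphi^-}$, using~(\ref{eq:imageautopim})) must be carried out, so the technical content is not avoided, only repackaged. Like the paper, you leave implicit the closure fact that no $H$-PIM outside $\Res_H^G(\mathfrak B)$ is linked to it (so that one really gets full blocks of $H$); that is at the same level of detail as the published proof and is supplied by the final statement of Proposition~\ref{prop:cliffpims} together with one more reciprocity computation.
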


\begin{proof}
    Assume $\mathfrak{B}\neq\varepsilon(\mathfrak{B})$. Then $\mathfrak{B}
\cap \varepsilon(\mathfrak{B})=\emptyset$ and $\mathfrak{B}$ contains no
$\varepsilon$-stable character. The columns of $D_\mathfrak{B}$ and
$D_{\varepsilon(\mathfrak{B})}$ are the ordinary constituents of
$\Phi_{\varphi}$ and $\Phi_{\varepsilon\otimes\varphi}$ for $\varphi\in
\mathfrak{B}$. Thanks to~(\ref{eq:permpim}), the columns of
$D_\mathfrak{B}$ and $D_{\varepsilon(\mathfrak{B})}$ are interchanged by
$\varepsilon$, and part 1 of Proposition~\ref{prop:cliffpims} implies that
$\mathfrak b=\Res_H^G(\mathfrak{B})$ is a single $p$-block of $H$ and that
the restriction of $D_\mathfrak{B}$ to $H$ is the decomposition matrix of
$\mathfrak b$.

    Assume now $\mathfrak{B}=\varepsilon(\mathfrak{B})$. Let $\alpha$ and
$\beta$ be in $\mathfrak{B}$. Then there are $\alpha_0,\,\ldots,\alpha_r\in
\mathfrak{B}$ with $\alpha_0=\alpha$, $\alpha_r=\beta$ and such that
$\langle \Phi_{\alpha_i}, \Phi_{\alpha_{i+1}} \rangle_G \neq 0$ for $0\leq
i\leq r-1$. Furthermore, we derive from the proof
of~Proposition~\ref{prop:cliffpims} that $\Ind_H^G(\Psi_{\beta}) =
\Phi_{\beta}+\varepsilon \otimes \Phi_{\beta}$ if $\beta \neq
\varepsilon \otimes \beta$, and $\Ind_H^G(\Psi_{\beta^+}) =
\Ind_H^G(\Psi_{\beta^-}) = \Phi_\beta$ otherwise. Suppose that
$\alpha_i=\alpha_{i} \otimes \varepsilon$ and $\alpha_{i+1} = \alpha_{i+1}
\otimes \varepsilon$. Then
$$
\langle\Psi_{\alpha_i^+}+\Psi_{\alpha_i^-},\Psi_{\alpha_{i+1}^{\pm}}\rangle_H=\langle
\Res_H(\Phi_{\alpha_i}),\Psi_{\alpha_{i+1}^{\pm}}\rangle_H=
\langle\Phi_{\alpha_i},\Ind_H^G(\Psi_{\alpha_{i+1}^{\pm}})\rangle_G=
\langle\Phi_{\alpha_i},\Phi_{\alpha_{i+1}}\rangle_G\neq
0.$$
It follows that either $\langle \Psi_{\alpha_i^+} ,
\Psi_{\alpha_{i+1}^{\pm}} \rangle_H \neq 0$ or $\langle \Psi_{\alpha_i^-}
, \Psi_{\alpha_{i+1}^{\pm}} \rangle_H \neq 0$. Assume $\alpha_i \neq
\varepsilon\alpha_i$. If $\alpha_{i+1} = \varepsilon\alpha_{i+1}$, then
$$\langle\Psi_{\alpha_i},\Psi_{\alpha_{i+1}^{\pm}}\rangle_H=\langle
\Res_H^G(\Phi_{\alpha_i}),\Psi_{\alpha_{i+1}^{\pm}}\rangle_H=\langle
\Phi_{\alpha_i},\Ind(\Psi_{\alpha_{i+1}^{\pm}})\rangle_G
=\langle\Phi_{\alpha_i},\Phi_{\alpha_{i+1}}\rangle_G\neq
0.$$
If $\alpha_{i+1}\neq\varepsilon\alpha_{i+1}$, then
 $$\langle\Psi_{\alpha_i},\Psi_{\alpha_{i+1}}\rangle_H=\langle
\Res_H^G(\Phi_{\alpha_i}),\Psi_{\alpha_{i+1}}\rangle_H=\langle
\Phi_{\alpha_i},\Ind(\Psi_{\alpha_{i+1}})\rangle_G
=\langle\Phi_{\alpha_i},\Phi_{\alpha_{i+1}}+\varepsilon\Phi_{\alpha_{i+1}}\rangle_G\neq
0.$$
    Now, assume there is $\alpha\in \mathfrak{B}$ such that
$\alpha\neq\varepsilon\otimes\alpha$. Let $\beta\in \mathfrak{B}$. Then
there are $\alpha_1,\ldots,\alpha_r\in \mathfrak{B}$ as above, and the
previous computations show that the constituents of $\Res_H^G(\alpha_i)$
and of $\Res_H^G(\alpha_{i+1})$ for any $0\leq i \leq r-1$ are in the same
$p$-block of $H$. Hence, $\Res_H^G(\mathfrak B)$ is a $p$-block of $H$,
and the result follows. 
\end{proof}

\subsection{Relation between unitriangular $p$-basic sets of  $G$ and
$H$}

We now make the following assumption

\begin{Hyp}
\label{hyp:cadre}
Let $G$ be a finite group and $H$ be a normal subgroup of index $2$. Let
$x\in G\backslash H$. Denote by $\sigma$ the automorphism of
$H$ induced by conjugation by $x$, and $\varepsilon$ the linear
character of $G$ induced by the canonical morphism $G\rightarrow G/H$. Let
$p$ be an odd prime number and $\mathfrak b$ be a union of $p$-blocks of
$H$ covered by a union $\mathfrak B$ of $p$-blocks of $G$. 
\end{Hyp}

    Let $\mathfrak{B}$ be a $p$-block of $G$. For any $p$-basic set $B$ of
$\mathfrak B$, we denote by $\Res_H^G(B)$ the set of constituents of the
$\Res_H^G(\chi)$ for $\chi\in B$.

\begin{Prop}
\label{prop:nombre}
With the notation as above, if $B$ is an $\varepsilon$-stable $p$-basic
set of a union of $p$-blocks $\mathfrak B$ of $G$ then $$|\IBr_p(
\Res^G_H(\mathfrak B)|=|\Res_H^G(B)|.$$
\end{Prop}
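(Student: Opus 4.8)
The plan is to deduce the equality from the fact that the trace of a linear map is independent of the chosen basis. Both sides will be read off from a count of fixed points of the involution ``tensoring by $\varepsilon$'': on the left, the fixed points of this involution acting on $B$; on the right, its fixed points acting on $\IBr_p(\mathfrak B)$. So the heart of the matter is to prove that these two numbers of fixed points coincide, after which the statement follows by an elementary count of constituents of restrictions.

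First I would set the stage. Recalling from~(\ref{eq:defchapeau}) that $\widehat{\varepsilon\otimes\chi}=\varepsilon\otimes\widehat\chi$, and that $\widehat B=\{\widehat\chi\mid\chi\in B\}$ is a $\Z$-basis of $\Z\IBr_p(\mathfrak B)$ (so that $\chi\mapsto\widehat\chi$ is injective on $B$ and $|B|=|\IBr_p(\mathfrak B)|$), the hypothesis that $B$ is $\varepsilon$-stable gives that tensoring by $\varepsilon$ maps $\widehat B$ into itself; being an involution, it then permutes $\widehat B$ and defines a $\Z$-linear automorphism of $\Z\IBr_p(\mathfrak B)$. Using the block decomposition $\Z\IBr_p(G)=\bigoplus_{\mathfrak C}\Z\IBr_p(\mathfrak C)$ one gets $\Z\IBr_p(\mathfrak B)\cap\IBr_p(G)=\IBr_p(\mathfrak B)$, so $\IBr_p(\mathfrak B)$ is itself $\varepsilon$-stable. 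Passing to $V=\Q\otimes_\Z\Z\IBr_p(\mathfrak B)$, tensoring by $\varepsilon$ is now a single $\Q$-linear involution $t$ of $V$ which permutes each of the two $\Q$-bases $\widehat B$ and $\IBr_p(\mathfrak B)$ of $V$; hence $\operatorname{tr}(t)$ equals both the number $f$ of $\chi\in B$ with $\varepsilon\otimes\chi=\chi$ and the number of $\varphi\in\IBr_p(\mathfrak B)$ with $\varepsilon\otimes\varphi=\varphi$, so these two numbers coincide.

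Next I would prove an elementary counting lemma: for any finite $\varepsilon$-stable set $S$ of irreducible characters of $G$ --- ordinary characters, or irreducible Brauer characters --- the number of distinct irreducible constituents of the family $(\Res_H^G(\chi))_{\chi\in S}$ equals $\tfrac12(|S|+3s)$, where $s$ is the number of $\varepsilon$-stable members of $S$. Indeed, by Clifford theory for the index two subgroup $H$ (classical for ordinary characters, and Proposition~\ref{prop:cliffpims} for Brauer characters), a non-$\varepsilon$-stable $\chi$ restricts to an irreducible character with $\Res_H^G(\chi)=\Res_H^G(\varepsilon\otimes\chi)$, while an $\varepsilon$-stable $\chi$ restricts to a sum of two distinct irreducibles $\chi^{\pm}$ with $\Ind_H^G(\chi^{\pm})=\chi$. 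Moreover, if one irreducible character of $H$ lies below both $\chi$ and $\chi'$ in $S$, then $\chi$ and $\chi'$ are both constituents of $\Ind_H^G$ of it, which has at most two constituents, any two of which differ by $\varepsilon$; so $\chi'\in\{\chi,\varepsilon\otimes\chi\}$. Thus distinct $\varepsilon$-orbits of $S$ contribute pairwise disjoint constituent sets: each orbit of size $2$ contributes one constituent, each orbit of size $1$ (a fixed point) contributes two, giving $(|S|-s)/2+2s=\tfrac12(|S|+3s)$.

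Finally I would combine the two. Applying the lemma to $S=B$ gives $|\Res_H^G(B)|=\tfrac12(|B|+3f)$. Observing that $\IBr_p(\Res_H^G(\mathfrak B))$ is exactly the set of irreducible constituents of the $\Res_H^G(\varphi)$ for $\varphi\in\IBr_p(\mathfrak B)$ (again by Clifford theory, since every Brauer character of $H$ in a block covered by $\mathfrak B$ is such a constituent, cf. Proposition~\ref{prop:restrictbloc}), and applying the lemma to the $\varepsilon$-stable set $S=\IBr_p(\mathfrak B)$, gives $|\IBr_p(\Res_H^G(\mathfrak B))|=\tfrac12(|\IBr_p(\mathfrak B)|+3f)$, using the coincidence of the two fixed-point counts from the second paragraph. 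Since $|B|=|\IBr_p(\mathfrak B)|$ because $B$ is a $p$-basic set, the two right-hand sides agree, which is the assertion. The main obstacle I anticipate is the first step: making precise that ``tensoring by $\varepsilon$'' is genuinely \emph{one} linear operator on $\Q\IBr_p(\mathfrak B)$ that is simultaneously a permutation matrix in the basis $\widehat B$ and in the basis $\IBr_p(\mathfrak B)$ --- once this is clean, the equality of fixed-point counts, and hence the whole statement, follows formally.
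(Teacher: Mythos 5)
Your proof is correct, and it follows the same two-step structure as the paper's: first the equality $|\operatorname{Fix}_\varepsilon(B)|=|\operatorname{Fix}_\varepsilon(\IBr_p(\mathfrak B))|$, then a Clifford-theoretic count of constituents of restrictions (non-$\varepsilon$-stable characters restrict irreducibly in $\varepsilon$-pairs, stable ones split into two distinct constituents, and distinct $\varepsilon$-orbits have disjoint constituent sets). The only real difference is how the fixed-point equality is obtained: the paper transports the argument of \cite[Proposition 6.1]{BG} verbatim to get Equation~(\ref{eq:cardinalite}), whereas you prove it directly by observing that tensoring with $\varepsilon$ is a single linear involution of $\Q\otimes_\Z\Z\IBr_p(\mathfrak B)$ which permutes both bases $\widehat B$ and $\IBr_p(\mathfrak B)$, so its trace computes both fixed-point counts at once. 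This is essentially the same change-of-basis idea underlying the cited result, but your version is self-contained, and you also settle two small points the paper leaves implicit: that $\IBr_p(\mathfrak B)$ is itself $\varepsilon$-stable (via the block decomposition of $\Z\IBr_p(G)$, given that $\varepsilon$ stabilizes the lattice $\Z\IBr_p(\mathfrak B)$ spanned by $\widehat B$), and that $\chi\mapsto\widehat\chi$ is injective on $B$, so that fixed points of the permutation of $\widehat B$ really correspond to $\varepsilon$-fixed characters of $B$ and $|B|=|\IBr_p(\mathfrak B)|$; both are needed to make the trace argument and the final count $\tfrac12(|S|+3s)$ rigorous.
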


\begin{proof}
    We follow step by step the proof of~\cite[Proposition 6.1]{BG}. This
is given for the symmetric group $\sym_n$, the sign character
$\varepsilon$ and the full decomposition matrix of $\Irr(\mathfrak S_n)$
but the argument is analogous for a group $G$ with index two subgroup
$H$ and a $p$-block $\mathfrak B$ with $\varepsilon$-stable $p$-basic set
$B$. We obtain
\begin{equation}
\label{eq:cardinalite}
|\operatorname{Fix}_{\varepsilon}(B)| = |\operatorname{Fix}_{\varepsilon}(
\IBr_p(\mathfrak B))|, 
\end{equation}
where $\operatorname{Fix}_{\varepsilon}(B))$ is the subset of
$\varepsilon$-fixed characters of $B$. 
    Now, since $p$ is odd, Clifford's modular theory~\cite[Theorem
9.18]{hupp} implies that each Brauer character of $\operatorname{ Fix}_{
\varepsilon}( \IBr( \mathfrak B))$ splits into two irreducible Brauer
characters of $\Res_H^G(\IBr(\mathfrak B))$, while the others restrict
irreducibly to $H$. Using that every irreducible character $\chi$ of $B$
also splits into two or one irreducible character(s) of $\Res_H^G(B)$
depending on whether $\chi \in \operatorname{ Fix}_{\varepsilon}( B)$ or
not, the result follows.
\end{proof}

\begin{Rem}
Let $B$ be an $\varepsilon$-stable $p$-basic set of a $p$-block $\mathfrak
B$ of $G$. Even though $\Res_H^G(B)$ is not a $p$-basic set of $\Res_H^G(
\mathfrak B)$ in general, Proposition~\ref{prop:nombre} asserts however
that we can derive from $B$ the number of Brauer characters of
$\Res_H^G(\mathfrak B)$.   
\end{Rem}

The following result is one of the main results of this paper. 
\begin{Th} 
\label{thm:basicGH}
We assume that Hypothesis~\ref{hyp:cadre} holds. We suppose that
$\mathfrak B$ has an $\varepsilon$-stable unitriangular $p$-basic set $(B,
\leq, \Theta)$ such that $\Theta:B \rightarrow \IBr_p(\mathfrak B)$ is
$\varepsilon$-equivariant. Then $b = \Res_H^G(B)$ is a unitriangular
$p$-basic set of $\mathfrak b$.
\end{Th}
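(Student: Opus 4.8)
The plan is to transfer the unitriangular structure from $\mathfrak B$ to $\mathfrak b$ by restricting everything to $H$ and keeping careful track of which characters split. First I would recall the combinatorial shape of the situation: by Proposition~\ref{prop:restrictbloc} the restriction $\Res_H^G(\mathfrak B)$ is a union of $p$-blocks of $H$, and its irreducible Brauer characters are exactly the $\Res_H^G(\varphi)$ for $\varphi\in\IBr_p(\mathfrak B)$ with $\varepsilon\otimes\varphi\neq\varphi$, together with the two constituents $\varphi^+,\varphi^-$ for each $\varepsilon$-stable $\varphi$; and similarly each $\chi\in B$ restricts to one irreducible character of $H$ if $\varepsilon\otimes\chi\neq\chi$ and to two, $\chi^+,\chi^-$, if $\chi$ is $\varepsilon$-fixed. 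Because $\Theta$ is $\varepsilon$-equivariant, $\chi\in B$ is $\varepsilon$-fixed if and only if $\Theta(\chi)$ is $\varepsilon$-fixed, so the "splitting" pattern on $B$ matches the one on $\IBr_p(\mathfrak B)$ perfectly; in particular $|\Res_H^G(B)|=|\IBr_p(\Res_H^G(\mathfrak B))|$, which is also Proposition~\ref{prop:nombre}.

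Next I would define the bijection $\Theta_H\colon \Res_H^G(B)\to\IBr_p(\mathfrak b)$: for a non-$\varepsilon$-fixed $\chi\in B$ send $\Res_H^G(\chi)$ to $\Res_H^G(\Theta(\chi))$, and for an $\varepsilon$-fixed $\chi$ send $\chi^\pm$ to $\Theta(\chi)^\pm$ (choosing the labelling of the two constituents of $\chi$ and of $\Theta(\chi)$ compatibly — here I can use the automorphism $\sigma$, since $\sigma$ permutes the two constituents of any $\varepsilon$-fixed character and, by Lemma~\ref{lemma:automorphisme} together with the remark after Proposition~\ref{prop:cliffpims}, commutes appropriately with the decomposition data). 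Then I would compute the relevant decomposition numbers: for $\chi,\varphi\in\IBr_p(\mathfrak B)$ one has $d_{\chi,\varphi}=\langle\widehat{\chi},\Phi_\varphi\rangle_G$, and restriction/Frobenius reciprocity express the $H$-decomposition numbers $d_{\Res\chi,\Res\varphi}$ in terms of the $G$-ones and the splittings recorded in Proposition~\ref{prop:cliffpims}. The key point to verify is the diagonal entry: $d_{\chi,\Theta(\chi)}=1$ forces $d_{\Res_H^G(\chi),\Theta_H(\text{constituent})}=1$ in each of the cases (non-split/non-split, split/split), using that $\Res_H^G(\Phi_\varphi)=\Psi_\varphi$ or $\Psi_{\varphi^+}+\Psi_{\varphi^-}$ accordingly, and that $\widehat{\chi^\pm}$ are the constituents of $\Res_H^G(\widehat\chi)$.

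For the triangularity condition I would first transport the order: define $\leq_H$ on $\Lambda_H$ (the labelling set of $\Irr(H)$) by pulling back $\leq$ along restriction, breaking the tie between $\chi^+$ and $\chi^-$ arbitrarily (say $\chi^+<_H\chi^-$), and likewise making each pair $\chi^\pm$ sit just below/above where $\chi$ sat. Then, given $M\in\IBr_p(\mathfrak b)$ and an $H$-character $\psi$ with $d_{\psi,M}\neq 0$, I lift: $\psi$ is a constituent of $\Res_H^G(\chi)$ for some $\chi\in\Irr(\mathfrak B)$, $M$ is a constituent of $\Res_H^G(\Theta(\chi'))$ for $\chi'=\Theta_H^{-1}(M)$'s lift, and the nonvanishing of $d_{\psi,M}$ together with Frobenius reciprocity forces $d_{\chi,\Theta(\chi')}\neq 0$ (possibly after summing over the $\varepsilon$-twists, which does not hurt since twisting preserves the order-relevant data by $\varepsilon$-equivariance of $\Theta$); hence $\chi\leq\chi'$ by unitriangularity of $(B,\leq,\Theta)$, and by construction of $\leq_H$ this gives $\psi\leq_H$ the lift of $M$. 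I expect the main obstacle to be exactly this bookkeeping in the $\varepsilon$-fixed case: one must ensure the arbitrary choice of which constituent is "$+$" and which is "$-$" is made consistently on both the $B$-side and the $\IBr_p$-side (via $\sigma$), and that the resulting $\leq_H$ genuinely refines to a total order with the constituents of a split character adjacent — so that a strict inequality upstairs is not lost when two characters with comparable $\leq$ have overlapping sets of $H$-constituents. Everything else reduces to Clifford theory bookkeeping already packaged in Proposition~\ref{prop:cliffpims} and Proposition~\ref{prop:restrictbloc}.
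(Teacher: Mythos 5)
Your plan follows the same route as the paper's proof: restrict characters and PIMs via Proposition~\ref{prop:cliffpims}, match split constituents, transport the order, and check triangularity by Frobenius reciprocity. However, one load-bearing choice is left undetermined in your sketch, and it is precisely where the $\varepsilon$-stability of $B$ and the $\varepsilon$-equivariance of $\Theta$ get consumed. For a non-$\varepsilon$-stable $\chi\in B$, the character $\psi=\Res_H^G(\chi)=\Res_H^G(\varepsilon\otimes\chi)$ has \emph{two} preimages in $B$, so ``pulling back $\leq$ along restriction'' does not define $\leq_H$ on such characters, and the choice matters: Frobenius reciprocity only gives $d_{\psi,\Res_H^G(\Theta(\chi'))}=d_{\chi,\Theta(\chi')}+d_{\varepsilon\otimes\chi,\Theta(\chi')}$, and non-vanishing of this sum, combined with $d_{\varepsilon\otimes\chi,\varepsilon\otimes\varphi}=d_{\chi,\varphi}$ and the equivariance of $\Theta$, yields an order relation between $\operatorname{Max}_{\leq}(\chi,\varepsilon\otimes\chi)$ and $\operatorname{Max}_{\leq}(\chi',\varepsilon\otimes\chi')$, but in general no relation between the minima of the two pairs. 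Hence $\leq_H$ must be defined by positioning $\psi$ at the \emph{larger} member of its fibre; this is exactly the paper's normalization ``$\varepsilon\otimes\chi_i\leq\chi_i$'' for the chosen representatives, which is also what makes terms like $d_{\varepsilon\otimes\chi_i,\Theta(\chi_j)}$ vanish in the mixed cases. If the position of $\psi$ were taken at the smaller member (which ``breaking ties arbitrarily'' would allow), the triangularity argument genuinely breaks, so you should state this normalization explicitly rather than leave it to an unspecified pullback.

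By contrast, the point you single out as the main obstacle, the consistent labelling of the split constituents, is handled exactly as you suggest and is not delicate: the computation $d_{\psi_i^{\pm},\varphi_i^{+}}+d_{\psi_i^{\pm},\varphi_i^{-}}=\langle\chi_i,\Phi_{\Theta(\chi_i)}\rangle_G=1$ forces a $\{0,1\}$ pattern, one labels so that $d_{\psi_i^{+},\varphi_i^{+}}=1$, and then the identity~(\ref{eq:coeffauto}) applied to $\sigma$ (which swaps $\psi_i^{\pm}$ and $\varphi_i^{\pm}$) gives $d_{\psi_i^{-},\varphi_i^{-}}=1$ and $d_{\psi_i^{-},\varphi_i^{+}}=0$; note that this labelling is pinned down by the decomposition numbers themselves rather than by $\sigma$ alone, and the adjacency of $\psi_i^{+},\psi_i^{-}$ in the order is then harmless. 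With the maximal-representative normalization made explicit, your argument coincides with the proof in the paper.
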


\begin{proof}
    We consider a subset $A=\{\chi_1,\ldots,\chi_t\}$ of $B$ that
contains all $\varepsilon$-stable characters of $B$, and only one of
$\chi$ and $\varepsilon\otimes\chi$ when $\chi$ is a non
$\varepsilon$-stable character of $B$.
    By Clifford theory, each character of $A$ is above a character of $b$.
Furthermore, we suppose that the characters are chosen such that
$$\chi_1\leq\chi_2\leq\cdots\leq\chi_s\quad\text{and}\quad
\varepsilon\otimes\chi_i\leq\chi_i \ \text{if } i\ \text{is such that }
\varepsilon\otimes\chi_i\neq\chi_i.$$
    Since $B$ is $\varepsilon$-stable, Condition (ii) and
Equation~(\ref{eq:permpim}) give that $\chi_i$ is $\varepsilon$-stable if
and only if $\Phi_{\Theta(\chi_i)}$ is. If $\chi_i$ is $\varepsilon$-stable,
then we denote by $\varphi_i^{\pm}$ the constituents of
$\Res_H^G(\Theta(\chi_i))$. If $\chi_i\neq \varepsilon\otimes \chi_i$,
then $\Theta(\chi_i)\neq\varepsilon\otimes\Theta(\chi_i)$ by
Condition~(ii), and we write $\varphi_i=\Res_H^G(\Theta(\chi_i))\in
\IBr_p(\mathfrak b)$. 
    Now, we order $\Res_H^G(B)$ such that if $\psi$ and $\psi'$ are
constituents of $\Res_H^G(\chi_i)$ and $\Res_H^G(\chi_j)$ with $i\leq j$,
then $\psi\leq\psi'$. Suppose that $\chi_i\in B$ is $\varepsilon$-stable.
Write $\psi_i^+$ and $\psi_i^-$ for the constituents of
$\Res_H^G(\chi_i)$.
We have
$$\begin{array}{rcl}
d_{\psi_i^{\pm},\varphi_i^{+}}+d_{\psi_i^{\pm},\varphi_i^{-}}&=&\langle
\psi_i^{\pm},\Phi_{\varphi_i^{+}}\rangle_H + \langle
\psi_i^{\pm},\Phi_{\varphi_i^{\-}}\rangle_H\\
&=&\langle
\psi_i^{\pm},\Res_H^G(\Phi_{\Theta(\chi_i))}\rangle_H \\
&=&\langle\chi_i,\Phi_{\Theta(\chi_i)}
\rangle_G\\ 
&=&1.\end{array}$$
Hence $\{d_{\psi_i^{\pm}, \varphi_i^{+}}, d_{\psi_i^{\pm},
\varphi_i^{-}}\} = \{0,1\}$, because decomposition numbers are non-negative
integers. We assume that the labeling of $\psi_i^{\pm}$ is chosen such
that $d_{\psi_i^{+},\varphi_i^{+}}=1$ and
$d_{\psi_i^{+},\varphi_i^{-}}=0$. Furthermore,~(\ref{eq:coeffauto})
implies that $d_{\psi_i^{-},\varphi_i^{-}}=1$ and
$d_{\psi_i^{-},\varphi_i^{+}}=0$. With these choices, we  define
$\psi_i^+\leq\psi_i^{-}$.
    Finally, we set
$$\begin{array}{cccc}
\Psi:&\Res_H^G(B)&\longrightarrow &\IBr_p(\mathfrak b),\\
&\psi_i^{\pm}&\longmapsto &\varphi_i^{\pm}\\
&\psi_i&\longmapsto& \varphi_i
\end{array}.$$
Assume that $1\leq i\leq j\leq t$.
\begin{itemize}
\item Suppose that $\psi_i$ and $\psi_j$ are $\sigma$-stable.
Then
\begin{equation}
\label{eq:inter}
\begin{array}{rcl}
d_{\psi_i,\varphi_j}&=&\langle \psi_i,\Phi_{\varphi_j}\rangle_H\\
&=&
\langle \Ind_H^G(\psi_i),\Phi_{\Theta(\chi_j)}\rangle_G\\
&=&\langle
\chi_i+\varepsilon\otimes
\chi_i,\Phi_{\Theta(\chi_j)}\rangle_G\\
&=&d_{\chi_i,\Theta(\chi_j)}+d_{\varepsilon\otimes\chi_i,\Theta(\chi_j)}.
\end{array}
\end{equation}
However, $\varepsilon\otimes\chi_i\leq \chi_i\leq \chi_j$. If $i<j$ then
$d_{\chi_i,\Theta(\chi_j)}=0=d_{\varepsilon\otimes\chi_i,\Theta(\chi_j)}$,
and $d_{\psi_i,\varphi_j}=0$. If $i=j$, then~(\ref{eq:inter}) gives
$d_{\psi_i,\varphi_i}=1$ because
$d_{\chi_i,\Theta(\chi_i)}=1$, and
$d_{\varepsilon\otimes\chi_i,\Theta(\chi_i)}=0$ since
$\varepsilon\otimes\chi_i\leq \chi_i$.
\item Suppose that $\psi_i$ is $\sigma$-stable and $j$ labels $\psi_j^+$
and $\psi_j^-$.
Then
\begin{align*}
d_{\psi_i,\varphi_j^{+}}+d_{\psi_i,\varphi_j^-}&=
\langle \psi_i,\Phi_{\varphi_j^+}\rangle_H + \langle
\psi_i,\Phi_{\varphi_j^-}\rangle_H=
\langle \psi_i,\Phi_{\varphi_j^+}+\Phi_{\varphi_j^-}\rangle_H
=\langle \psi_i,\Res_H^G(\Phi_{\Theta(\chi_j)})\rangle_H\\
&=
\langle \Ind_H^G(\psi_i),\Phi_{\Theta(\chi_j)}\rangle_G
=\langle \varepsilon\otimes\chi_i+\chi_i,\Phi_{\Theta(\chi_j)}\rangle_G\\
&=d_{\chi_i,\Theta(\chi_j)}+d_{\varepsilon\otimes\chi_i,\Theta(\chi_j)}\\
&=0
\end{align*}
since $\varepsilon\otimes\chi_i\leq \chi_i < \chi_j$.
\item Suppose that $i$ and $j$ label non $\sigma$-stable characters.
Assume $i<j$. The same computation as above gives
\begin{align*}
d_{\psi_i^{\pm},\varphi_j^{+}}+d_{\psi_i^{\pm},\varphi_j^-}&=
\langle \chi_i,\Phi_{\Theta(\chi_j)}\rangle_G\\
&=d_{\chi_i,\Theta(\chi_j)}\\
&=0.
\end{align*}
If $i=j$, then $d_{\psi_i^+,\varphi_i^+}=1=d_{\psi_i^-,\varphi_i^-}$ and 
$d_{\psi_i^+,\varphi_i^-}=0$ by construction.
\item Suppose that $i$  labels two characters and $\chi_j$ is
$\varepsilon$-stable. Then
\begin{align*}
d_{\psi_i^{\pm},\varphi_j}&=
\langle \psi_i^{\pm},\Res_H^G(\Phi_{\Theta(\chi_j)})\rangle_H
=\langle \Ind_H^G(\psi_i^{\pm}),\Phi_{\Theta(\chi_j)}\rangle_G\\
&=\langle \chi_i,\Phi_{\Theta(\chi_j)}\rangle_G\\
&=d_{\chi_i,\Theta(\chi_j)}\\
&=0.
\end{align*}
\end{itemize}
This proves the result.
\end{proof}

\noindent We consider the set $\mathcal T$ of $\varepsilon$-stable
irreducible characters $\chi$ of $G$ such that the constituents $\chi^+$
and $\chi^-$ of their restriction to $H$ satisfies $\widehat{\chi}^+ \neq
\widehat{\chi}^-$.
\label{def:defT}

\begin{Rem}
\label{rk:ensembleTrestriction} 
Note that, if $B$ is a unitriangular $p$-basic set of $G$ which satisfies
the assumptions of Theorem~\ref{thm:basicGH}, then the $\sigma$-stable
characters of $B$ lie in $\mathcal T$. Indeed, since $\Res_H^G(B)$ is a
$p$-basic set of $H$, for any $\varepsilon$-stable character $\chi$ of
$B$, by Remark~\ref{rk:defbasic}, the family $(\widehat \chi^+,
\widehat\chi^-)$ is free and, in particular, $\widehat\chi^+ \neq
\widehat\chi^-$.
\end{Rem}

\begin{exa}
\label{ex:S6A6}
Let $G=\sym_6$ and $H=\alt_6$. Write $\operatorname{sgn}$ for the sign
character of $G$. The principal $3$-block $\mathfrak B_0$ of $G$ has $9$
irreducible characters and $5$ Brauer characters. It has a unitriangular
$3$-basic set $(B,\leq,\Theta)$ with $B=\{\chi_1, \chi_2, \chi_3, \chi_4,
\chi_5\}$, such that $\chi_2 = \operatorname{sgn} \otimes \chi_1$,
$\chi_4 = \operatorname{sgn} \otimes \chi_3$, and $\chi_5$ is
$\varepsilon$-stable and lies in $\mathcal T$. 
The restriction of the $3$-decomposition matrix to $B$ is

{\tikzstyle{every picture}+=[remember picture]
\renewcommand{\arraystretch}{1.2}
\renewcommand{\arraycolsep}{2pt}
$$
\begin{array}{c|p{1pt}cccccp{1pt}|}
\cline{2-2}
\cline{8-8}
\chi_1&  & 1 & 0 & 0 & 0 & 0 &  \\
\chi_2&  & 0 & 1 & 0 & 0 & 0 &  \\
\chi_3&  & 1 & 0 & 1 & 0 & 0 &  \\
\chi_4&  & 0 & 1 & 0 & 1 & 0 &  \\
\chi_5&  & 1 & 1 & 1 & 1 & 1 &  \\
\cline{2-2}
\cline{8-8}
\end{array}.
$$
}

    We remark that $\Theta$ is $\varepsilon$-equivariant. We set
$\widetilde{\varphi}_i=\Theta(\chi_i)$. Furthermore, we write $\psi_i =
\Res_H^G(\chi_i)$ for $1\leq i\leq 4$ and $\Res_H^G(\chi_5) =
\psi_5^++\psi_5^-$. Then the set $A$ appearing in the proof of
Theorem~\ref{thm:basicGH} is $A=\{\chi_2, \chi_4, \chi_5\}$. Set
$\varphi_2 = \Res_H^G(\widetilde{\varphi}_1)$, $\varphi_4 =
\Res_H^G(\widetilde{\varphi}_4)$ and $\Res_H^G(\widetilde{\varphi}_5) =
\varphi_5^++\varphi_5^-$. Using the fact that $\langle \psi_5^+,
\Phi_{\varphi_i}\rangle_H = \langle \psi_5^-, \Phi_{\varphi_i}\rangle_H =
d_{\chi_5,\Phi_{ \widetilde{ \varphi}_i}}$ for $i\in\{2,4\}$ (see for
example the last computation in the proof of Theorem~\ref{thm:basicGH}),
we deduce from Theorem~\ref{thm:basicGH} that the restriction to
$\{\psi_2,\psi_4,\psi_5^+,\psi_5^-\}$ of the $3$-decomposition matrix of
the principal block of $\alt_6$ is

{\tikzstyle{every picture}+=[remember picture]
\renewcommand{\arraystretch}{1.2}
\renewcommand{\arraycolsep}{2pt}
$$
\begin{array}{c|p{1pt}ccccp{1pt}|}
\cline{2-2}
\cline{7-7}
\psi_2&  & 1 & 0 & 0 & 0 &  \\
\psi_4&  & 1 & 1 & 0 & 0 &  \\
\psi_5^+&  & 1 & 1 & 1 & 0 &  \\
\psi_5^-&  & 1 & 1 & 0 & 1 &  \\
\cline{2-2}
\cline{7-7}
\end{array}.
$$
}
\end{exa}
\bigskip

    Now, we assume that Hypothesis~\ref{hyp:cadre} holds and that
$\mathfrak b$ has a unitriangular $p$-basic set $(b,\,\leq,\,\Psi)$. We
write $\mathcal R_{\mathfrak b}$ for the set of Brauer characters
$\widetilde{\varphi}\in\IBr_p(G)$ such that the restriction of $\varphi$
to $H$ splits into the sum of two Brauer characters of $\mathfrak b$. Set 
$$\mathcal S_{\mathfrak b}=\{\varphi,\,{}^{\sigma}\varphi\mid
\widetilde{\varphi}\in \mathcal
R_{\mathfrak b}\}\quad\text{and}\quad \mathcal
C_{b}=\{\Psi^{-1}(\varphi)\mid\varphi\in\mathcal S_{\mathfrak b}\}.$$

\begin{Not}
\label{notation:choixH}
Assume Hypothesis~\ref{hyp:cadre} is satisfied, and that $\mathfrak b$ has
a unitriangular $p$-basic set $(b,\leq,\Psi)$. For any $\varphi\in\mathcal
S_{\mathfrak b}$, we write $E(\varphi)$ and $E(\varphi)^{\star}$ for the
irreducible characters of $b$ such that
\begin{equation}
\label{eq:propchoix}
\{E(\varphi),E(\varphi)^{\star}\}=\{\Psi^{-1}(\varphi),
\Psi^{-1}({}^{\sigma}\varphi)\}\quad\text{and}\quad E(\varphi)\leq
E(\varphi)^{\star}.
\end{equation}
Note that this construction depends on the basic set $b$.
\end{Not}

\begin{Th}
\label{thm:proprietecoolH}
Assume Hypothesis~\ref{hyp:cadre} holds and that $\mathfrak b$ has a
unitriangular $p$-basic set $(b,\leq,\Psi)$. Then, for any $\varphi \in
\mathcal S_{\mathfrak b}$, we have
\begin{equation}
\label{eq:propcool}
{}^{\sigma}\widehat{E}(\varphi)\neq
\widehat{E}(\varphi)\quad\text{and}\quad
E(\varphi)\leq{}^{\sigma}E(\varphi).
\end{equation}
Furthermore, if we write
$$\chi=\Ind_H^G(E(\varphi))\quad\text{and}\quad
\vartheta=\Ind_H^G(\varphi),$$
then $\chi\in\Irr(G)$ and
$$d_{\chi,\vartheta}=1.$$
\end{Th}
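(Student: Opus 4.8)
The plan is to analyze the restriction and induction maps between $G$ and $H$ for the characters $E(\varphi), E(\varphi)^\star$ and the Brauer character $\varphi$, using the unitriangularity of $(b,\leq,\Psi)$ together with the structure results of \S\ref{sub:decompositionindectwo}. First I would establish the two relations in~(\ref{eq:propcool}). For the inequality $E(\varphi)\leq{}^{\sigma}E(\varphi)$: since $\sigma$ permutes the basic set $b$ only up to the fact that $\Psi$ need not be $\sigma$-equivariant, I would instead argue directly. By definition of $\mathcal{S}_{\mathfrak b}$, the Brauer characters $\varphi$ and ${}^\sigma\varphi$ are the two constituents of $\Res_H^G(\widetilde\varphi)$ for some $\varepsilon$-stable $\widetilde\varphi\in\IBr_p(G)$; in particular $\varphi\neq{}^\sigma\varphi$, and $\Psi^{-1}(\varphi)\neq\Psi^{-1}({}^\sigma\varphi)$. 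Then $E(\varphi)$ and $E(\varphi)^\star$ are, by Notation~\ref{notation:choixH}, exactly these two characters ordered so that $E(\varphi)\leq E(\varphi)^\star$. The point is to show ${}^\sigma E(\varphi)=E(\varphi)^\star$, i.e.\ that $\sigma$ swaps the two; this follows because applying $\sigma$ to $\Psi^{-1}(\varphi)=E(\varphi)$ and using~(\ref{eq:coeffauto}) shows ${}^\sigma E(\varphi)$ has decomposition row obtained by permuting columns via $\sigma$, and the only candidate in $b$ with $d_{{}^\sigma E(\varphi),{}^\sigma\varphi}=1$ among $\{E(\varphi),E(\varphi)^\star\}$ is $E(\varphi)^\star$ (since $E(\varphi)$ already has a $1$ in column $\varphi\neq{}^\sigma\varphi$). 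Combined with $E(\varphi)\leq E(\varphi)^\star$ this gives $E(\varphi)\leq{}^\sigma E(\varphi)$.

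For the inequality ${}^\sigma\widehat{E}(\varphi)\neq\widehat{E}(\varphi)$: suppose for contradiction that $\widehat{E}(\varphi)$ is $\sigma$-fixed. By~(\ref{eq:relbasique}) and~(\ref{eq:coeffauto}), this forces the $\varphi$-row of the decomposition matrix of $b$ to be $\sigma$-stable, so $d_{E(\varphi),\psi}=d_{E(\varphi),{}^\sigma\psi}$ for all $\psi\in\IBr_p(\mathfrak b)$; in particular $d_{E(\varphi),{}^\sigma\varphi}=d_{E(\varphi),\varphi}=1$. But then $\widehat{E}(\varphi)$ has two distinct Brauer constituents $\varphi,{}^\sigma\varphi$ with coefficient $1$, whereas by the unitriangularity of $(b,\leq,\Psi)$ and condition~(i) of the definition of unitriangular basic set, $\widehat{\Psi^{-1}(\varphi)}$ has a unique constituent $\varphi$ with coefficient $1$ among those $\psi$ with $\Psi^{-1}(\psi)\geq\Psi^{-1}(\varphi)$; since $E(\varphi)$ was taken to be $\leq E(\varphi)^\star$, unitriangularity (condition~(ii)) forbids $d_{E(\varphi),{}^\sigma\varphi}\neq 0$ when ${}^\sigma\varphi$ indexes the strictly larger character. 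This contradiction gives the claim. (Here I use that ${}^\sigma\varphi=\Psi(E(\varphi)^\star)$ with $E(\varphi)^\star>E(\varphi)$, established in the previous paragraph.)

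For the final assertion, I would invoke Clifford theory for $H\triangleleft G$: since $\widehat{E}(\varphi)\neq{}^\sigma\widehat{E}(\varphi)$, the character $E(\varphi)$ is not $\sigma$-stable (if it were, its hat would be too), hence by the index-two Clifford theory $\chi=\Ind_H^G(E(\varphi))\in\Irr(G)$ and likewise, since $\varphi\neq{}^\sigma\varphi$, $\vartheta=\Ind_H^G(\varphi)\in\IBr_p(G)$ — indeed $\vartheta=\widetilde\varphi$, the $\varepsilon$-stable Brauer character whose restriction splits as $\varphi+{}^\sigma\varphi$. Then compute, using Frobenius reciprocity and~(\ref{eq:coeffauto}),
\begin{align*}
d_{\chi,\vartheta}&=\langle\widehat\chi,\Phi_\vartheta\rangle_G
=\langle\Ind_H^G(\widehat{E}(\varphi)),\Phi_\vartheta\rangle_G
=\langle\widehat{E}(\varphi),\Res_H^G(\Phi_\vartheta)\rangle_H\\
&=\langle\widehat{E}(\varphi),\Phi_\varphi+\Phi_{{}^\sigma\varphi}\rangle_H
=d_{E(\varphi),\varphi}+d_{E(\varphi),{}^\sigma\varphi}
=1+0=1,
\end{align*}
where $\Res_H^G(\Phi_\vartheta)=\Phi_\varphi+\Phi_{{}^\sigma\varphi}$ is part~2 of Proposition~\ref{prop:cliffpims}, $d_{E(\varphi),\varphi}=1$ is condition~(i) of the basic set, and $d_{E(\varphi),{}^\sigma\varphi}=0$ is condition~(ii) together with $E(\varphi)<E(\varphi)^\star=\Psi^{-1}({}^\sigma\varphi)$.

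The main obstacle I anticipate is the bookkeeping around the claim ${}^\sigma E(\varphi)=E(\varphi)^\star$ and, more precisely, verifying that ${}^\sigma\varphi=\Psi(E(\varphi)^\star)$ rather than some unrelated element of $\IBr_p(\mathfrak b)$: this is exactly where one must use that $\varphi,{}^\sigma\varphi$ are a $\sigma$-orbit and that $\Psi$ is a bijection $b\to\IBr_p(\mathfrak b)$, so that $\{\Psi^{-1}(\varphi),\Psi^{-1}({}^\sigma\varphi)\}=\{E(\varphi),E(\varphi)^\star\}$ by Notation~\ref{notation:choixH} — after that the ordering choice $E(\varphi)\leq E(\varphi)^\star$ pins everything down. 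Once this identification is in hand, the two displayed inequalities and the final decomposition-number computation are routine applications of~(\ref{eq:coeffauto}), Frobenius reciprocity, and the defining properties of the unitriangular basic set $(b,\leq,\Psi)$.
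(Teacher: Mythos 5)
Your second paragraph and your final computation are fine: the contradiction argument for ${}^{\sigma}\widehat{E}(\varphi)\neq\widehat{E}(\varphi)$ is essentially the paper's, and your Frobenius-reciprocity computation $d_{\chi,\vartheta}=\langle E(\varphi),\Res_H^G(\Phi_{\vartheta})\rangle_H=d_{E(\varphi),\varphi}+d_{E(\varphi),{}^{\sigma}\varphi}=1+0$ is a legitimate (and in fact shorter) route than the paper's, which instead restricts $\widehat{\chi}$ to $H$, extracts the identity $d_{\chi,\vartheta}=d_{E(\varphi),\varphi}+d_{{}^{\sigma}E(\varphi),\varphi}$ and rules out $d_{\chi,\vartheta}>1$ by contradiction. (Both you and the paper tacitly relabel so that $E(\varphi)=\Psi^{-1}(\varphi)$, which is harmless since the statement is symmetric in $\varphi$ and ${}^{\sigma}\varphi$.)

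The genuine gap is in your first paragraph, where you prove $E(\varphi)\leq{}^{\sigma}E(\varphi)$ by claiming ${}^{\sigma}E(\varphi)=E(\varphi)^{\star}$. That claim is false in general: the basic set $b$ is not assumed $\sigma$-stable, so ${}^{\sigma}E(\varphi)$ need not lie in $b$ at all, let alone in $\{E(\varphi),E(\varphi)^{\star}\}$; your phrase ``the only candidate in $b$ \ldots among $\{E(\varphi),E(\varphi)^{\star}\}$'' presupposes exactly this membership, and even granting it, a row of $b$ can have entry $1$ in a column other than its own diagonal one, so $d_{{}^{\sigma}E(\varphi),{}^{\sigma}\varphi}=1$ would not identify ${}^{\sigma}E(\varphi)$ with $\Psi^{-1}({}^{\sigma}\varphi)$. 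A concrete counterexample to ${}^{\sigma}E(\varphi)=E(\varphi)^{\star}$ is the basic set $b'=\{\psi_2,\psi_4,\psi_5^+,\psi_6\}$ of Example~\ref{ex:passtable}: there $E(\varphi_5^+)=\psi_5^+$, $E(\varphi_5^+)^{\star}=\psi_6$, while ${}^{\sigma}\psi_5^+=\psi_5^-\notin b'$. The correct argument, which is the paper's, does not need any equality: from $d_{E(\varphi),\varphi}=1$ and~(\ref{eq:coeffauto}) one gets $d_{{}^{\sigma}E(\varphi),{}^{\sigma}\varphi}=1\neq 0$, and the triangularity condition (ii) applies to \emph{every} row of $\Irr(\mathfrak b)$, in particular to ${}^{\sigma}E(\varphi)$ whether or not it belongs to $b$; since the column ${}^{\sigma}\varphi$ has $\Psi$-preimage $E(\varphi)^{\star}$, this forces $E(\varphi)^{\star}\leq{}^{\sigma}E(\varphi)$, and then~(\ref{eq:propchoix}) gives $E(\varphi)\leq{}^{\sigma}E(\varphi)$. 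With that repair, the remainder of your argument goes through unchanged, since your later steps only use $E(\varphi)<E(\varphi)^{\star}=\Psi^{-1}({}^{\sigma}\varphi)$ and not the false identification of ${}^{\sigma}E(\varphi)$.
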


\begin{proof}
    First, we remark that  ${}^{\sigma}\mathfrak b \cap \mathfrak b \neq
\emptyset$ since $\varphi$ and ${}^{\sigma}\varphi$ lie in $\mathfrak b$.
Hence, ${}^{\sigma}\mathfrak b = \mathfrak b$. Furthermore,
$\Phi_{\varphi}$ and $\Phi_{{}^{\sigma}\varphi}$ are distinct by
Lemma~\ref{lemma:automorphisme} since $\varphi \neq {}^{\sigma}\varphi$. 

    By assumption, $b$ is a unitriangular $p$-basic set of $\mathfrak b$
and $d_{E(\varphi),\varphi}=1$. Hence, Equation~(\ref{eq:coeffauto}) gives
$$d_{{}^{\sigma}E(\varphi),{}^{\sigma}\varphi}=1\neq 0,$$ 
and $E(\varphi)^{\star} \leq {}^{\sigma}E(\varphi)$, where
$E(\varphi)^{\star}$ is defined in Notation~\ref{notation:choixH}.
Then~(\ref{eq:propchoix}) implies that $E(\varphi) \leq
{}^{\sigma}E(\varphi)$. 
    On the other hand, by the choice of $E(\varphi)$, we also have
$d_{E(\varphi),{}^{\sigma}\varphi} = 0$. Hence
$\widehat{E}(\varphi)\neq{}^{\sigma}\widehat{E}(\varphi)$, otherwise
$d_{E(\varphi),{}^{\sigma}\varphi} = d_{{}^{\sigma}E(\varphi),
{}^{\sigma}\varphi}=1$. 
    In particular, $E(\varphi) \neq {}^{\sigma}E(\varphi)$, and $\chi$ is
an irreducible character of $G$ which satisfies $\chi = \varepsilon \otimes
\chi$ by Clifford theory. Write
$$\widehat{\chi}=\sum_{\beta\in \IBr_p(G)}d_{\chi,\beta}\,\beta.
$$
By Clifford theory, restricting this relation to $H$, we obtain
$$\widehat{E}(\varphi)+{}^{\sigma}\widehat{E}(\varphi)=\sum_{\beta =
\varepsilon\otimes\beta\in \IBr_p(G)}d_{\chi,\beta}\,(\beta^+ + \beta^-)
+\sum_{\beta\neq\varepsilon\otimes\beta}(d_{\chi,\beta}+
d_{\chi,\varepsilon\otimes\beta})\beta,
$$
and the following two identities:
\begin{align*}
\widehat{E}(\varphi)&=\sum_{\beta = \varepsilon\otimes\beta}
(d_{E(\varphi),\beta^+}\beta^++d_{E(\varphi),\beta^-}\beta^-)+\sum_{\beta
\neq\varepsilon \beta}d_{E(\varphi),\beta}\,\beta,
\\
{}^{\sigma}\widehat{E}(\varphi)&=\sum_{\beta = \varepsilon\otimes\beta}
(d_{{}^{\sigma}E(\varphi),\beta^+}\beta^++d_{{}^{\sigma}
E(\varphi),\beta^-}\beta^-)+\sum_{\beta
\neq\varepsilon \beta}d_{{}^{\sigma}E(\varphi),\beta}\,\beta.
\end{align*}
By uniqueness of the coefficients in a basis,  for all $\beta\in\IBr_p(G)$
such that $\beta = \varepsilon \otimes \beta$, we obtain
\begin{equation}
\label{eq:relres}
d_{\chi,\beta} =
d_{E(\varphi),\beta^+}+d_{{}^{\sigma}E(\varphi),\beta^+}\quad\text{and}
\quad
d_{\chi,\beta} =
d_{E(\varphi),\beta^-}+d_{{}^{\sigma}E(\varphi),\beta^-}.
\end{equation}
In particular, $d_{\chi,\vartheta} = d_{E(\varphi),\varphi} +
d_{{}^{\sigma}E(\varphi),\varphi}$. Assume that $d_{\chi,\vartheta} > 1$.
Then Equation~(\ref{eq:coeffauto}) gives
$$\begin{array}{rcl}
d_{E(\varphi),{}^{\sigma}\varphi} &=& d_{{}^{\sigma}E(\varphi),\varphi}\\
&=& d_{\chi,\vartheta}-d_{E(\varphi),\vartheta}\\
&=& d_{\chi,\vartheta}-1\\
&>&0,
\end{array}$$
which is a contradiction. The result follows.
\end{proof}
For any subsets $A\subseteq \Irr(H)$ and $B\subseteq \IBr_p(H)$, we write
$D_{A,B}$ for the restriction of the $p$-decomposition matrix of $H$ to
$A\times B$.

\begin{Prop}
\label{prop:unitriangulairerestriction}
Assume Hypothesis~\ref{hyp:cadre} holds, and that $\mathfrak b$ has a
unitriangular $p$-basic set $(b,\leq,\Psi)$ Let $D$ be the $p$-decomposition
matrix of $G$. Then there is a subset $\mathcal T_b$ of $\mathcal T$ 
so that $D_{\mathcal T_b,\mathcal R_{\mathfrak b }}$ is
unitriangularisable.
\end{Prop}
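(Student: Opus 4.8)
The plan is to build the subset $\mathcal T_b$ directly from the data supplied by Theorem~\ref{thm:proprietecoolH}. Recall that $\mathcal R_{\mathfrak b}$ is the set of Brauer characters $\widetilde\varphi \in \IBr_p(G)$ whose restriction to $H$ splits as $\varphi^+ + \varphi^-$ with $\varphi^\pm \in \IBr_p(\mathfrak b)$; write $\varphi = \varphi^+$ and ${}^\sigma\varphi = \varphi^-$, so that $\mathcal S_{\mathfrak b}$ is exactly $\{\varphi, {}^\sigma\varphi : \widetilde\varphi \in \mathcal R_{\mathfrak b}\}$. For each such $\widetilde\varphi$, Theorem~\ref{thm:proprietecoolH} hands us the character $E(\varphi) = E(\varphi^+)$ together with the induced character $\chi_{\widetilde\varphi} := \Ind_H^G(E(\varphi)) \in \Irr(G)$, which is $\varepsilon$-stable and satisfies $d_{\chi_{\widetilde\varphi}, \widetilde\varphi} = 1$ (here $\vartheta = \Ind_H^G(\varphi) = \widetilde\varphi$ by~(\ref{eq:ind2})). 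First I would check that $\chi_{\widetilde\varphi} \in \mathcal T$: it is $\varepsilon$-stable by construction, and the constituents of $\Res_H^G(\chi_{\widetilde\varphi})$ are $E(\varphi)$ and ${}^\sigma E(\varphi)$, which by the first assertion~(\ref{eq:propcool}) of Theorem~\ref{thm:proprietecoolH} satisfy ${}^\sigma\widehat E(\varphi) \neq \widehat E(\varphi)$; hence $\chi_{\widetilde\varphi} \in \mathcal T$. So I set
$$\mathcal T_b = \{\chi_{\widetilde\varphi} \mid \widetilde\varphi \in \mathcal R_{\mathfrak b}\} = \{\Ind_H^G(E(\varphi)) \mid \widetilde\varphi \in \mathcal R_{\mathfrak b}\} \subseteq \mathcal T.$$

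Next I would set up the orders so that $D_{\mathcal T_b, \mathcal R_{\mathfrak b}}$ becomes unitriangular. Since $(b, \leq, \Psi)$ is a basic set, $\Psi^{-1}$ identifies $\IBr_p(\mathfrak b)$ with a subset of $\Irr(\mathfrak b)$; the columns $\mathcal R_{\mathfrak b}$ correspond via $\widetilde\varphi \mapsto \varphi = \varphi^+$ to the subset $\{\Psi^{-1}(\varphi) : \widetilde\varphi \in \mathcal R_{\mathfrak b}\}$ of $\mathcal C_b \subseteq \Irr(\mathfrak b)$, and the map $\chi_{\widetilde\varphi} \mapsto \widetilde\varphi$ is then the candidate bijection $\mathcal T_b \to \mathcal R_{\mathfrak b}$. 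I order $\mathcal R_{\mathfrak b}$ (equivalently $\mathcal T_b$) by pulling back the restriction of $\leq$ to $\{E(\varphi) : \widetilde\varphi \in \mathcal R_{\mathfrak b}\}$ along $\widetilde\varphi \mapsto E(\varphi)$. With $d_{\chi_{\widetilde\varphi}, \widetilde\varphi} = 1$ giving the diagonal, it remains to show that $d_{\chi_{\widetilde\varphi}, \widetilde\psi} = 0$ whenever $E(\psi) < E(\varphi)$ in $\mathfrak b$ (for $\widetilde\varphi, \widetilde\psi \in \mathcal R_{\mathfrak b}$). For this I invoke the restriction formula established inside the proof of Theorem~\ref{thm:proprietecoolH}: for any $\beta \in \IBr_p(G)$ with $\beta = \varepsilon \otimes \beta$, Equation~(\ref{eq:relres}) gives $d_{\chi_{\widetilde\varphi}, \beta} = d_{E(\varphi), \beta^+} + d_{{}^\sigma E(\varphi), \beta^+}$. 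Applying this with $\beta = \widetilde\psi$ and $\beta^+ = \psi$, and using~(\ref{eq:coeffauto}) to rewrite $d_{{}^\sigma E(\varphi), \psi} = d_{E(\varphi), {}^\sigma\psi}$, I get $d_{\chi_{\widetilde\varphi}, \widetilde\psi} = d_{E(\varphi), \psi} + d_{E(\varphi), {}^\sigma\psi}$.

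The core of the argument is then a purely triangular estimate in the basic set $(b, \leq, \Psi)$. Since $b$ is unitriangular with $\Psi(E(\varphi))$ — no: more precisely, by Notation~\ref{notation:choixH}, $\{E(\varphi), E(\varphi)^\star\} = \{\Psi^{-1}(\varphi), \Psi^{-1}({}^\sigma\varphi)\}$, so $\Psi(E(\varphi)) \in \{\varphi, {}^\sigma\varphi\}$; in either case condition~(2) of unitriangularity for $b$ forces $d_{E(\varphi), \psi} = 0$ unless $E(\varphi) \leq \Psi^{-1}(\psi)$ and $d_{E(\varphi), {}^\sigma\psi} = 0$ unless $E(\varphi) \leq \Psi^{-1}({}^\sigma\psi)$, i.e. unless $E(\varphi) \leq E(\psi)$ or $E(\varphi) \leq E(\psi)^\star$. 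Since $E(\psi) \leq E(\psi)^\star$ by~(\ref{eq:propchoix}), in the case $E(\psi) < E(\varphi)$ we would need $E(\varphi) \leq E(\psi)^\star$; I would rule this out — and this is the step I expect to be the main obstacle — by a slightly more careful bookkeeping, noting that $d_{E(\varphi),{}^\sigma\psi}\neq 0$ together with~(\ref{eq:coeffauto}) would give $d_{{}^\sigma E(\varphi),\psi}\neq 0$, hence $E(\psi)^\star \leq {}^\sigma E(\varphi)$ by unitriangularity applied to the column $\psi$, while $E(\psi) < E(\varphi) \leq {}^\sigma E(\varphi)$ by~(\ref{eq:propcool}); combining these constraints on the four characters $E(\varphi), {}^\sigma E(\varphi), E(\psi), E(\psi)^\star$ (all of which are $\Psi$-images of $\varphi, {}^\sigma\varphi, \psi, {}^\sigma\psi$) with the fact that $\sigma$ is an involution up to inner automorphisms, forces a contradiction with $E(\psi)<E(\varphi)$. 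Granting this, $d_{\chi_{\widetilde\varphi},\widetilde\psi}=0$ whenever $E(\psi)<E(\varphi)$, which is exactly the triangularity of $D_{\mathcal T_b,\mathcal R_{\mathfrak b}}$ with respect to the chosen orders, so $D_{\mathcal T_b,\mathcal R_{\mathfrak b}}$ is unitriangularisable and the proposition follows.
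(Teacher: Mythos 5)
Your construction of $\mathcal T_b$ is exactly the paper's: you take $\chi_{\widetilde\varphi}=\Ind_H^G(E(\varphi))$, get $\chi_{\widetilde\varphi}\in\mathcal T$ and the diagonal entries $d_{\chi_{\widetilde\varphi},\widetilde\varphi}=1$ from Theorem~\ref{thm:proprietecoolH}, and your identity $d_{\chi_{\widetilde\varphi},\widetilde\psi}=d_{E(\varphi),\psi}+d_{E(\varphi),{}^{\sigma}\psi}$ (via~(\ref{eq:relres}) and~(\ref{eq:coeffauto})) is the same relation the paper obtains by Frobenius reciprocity and Proposition~\ref{prop:cliffpims}. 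Two smaller omissions: you never check that $\widetilde\varphi\mapsto\chi_{\widetilde\varphi}$ is injective (the paper does, using~(\ref{eq:propcool})), and injectivity of $\widetilde\varphi\mapsto E(\varphi)$ is needed for your pulled-back order on $\mathcal R_{\mathfrak b}$ to be well defined.

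The genuine gap is the triangularity step, and it is not a matter of ``more careful bookkeeping'': you try to kill the entries with $E(\psi)<E(\varphi)$, i.e.\ the entries on the side of the diagonal about which the hypothesis on $b$ says nothing, whereas the paper kills the entries with $E(\varphi)<E(\psi)$. In the reading of condition (2) that is actually used throughout Section~2 (nonzero decomposition numbers in the column of $\varphi$ occur only in rows $\geq\Psi^{-1}(\varphi)$; this is the reading under which the proof of Theorem~\ref{thm:proprietecoolH} — in particular $d_{E(\varphi),{}^{\sigma}\varphi}=0$ and $E(\varphi)\leq{}^{\sigma}E(\varphi)$ — goes through), the entries with $E(\psi)<E(\varphi)$ are precisely the unconstrained below-diagonal entries and need not vanish, so the statement you are trying to prove is false in general. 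If instead you read condition (2) literally ($d_{\lambda,M}=0$ unless $\lambda\leq\Psi^{-1}(M)$), then your applications of unitriangularity of $b$ are fine, but~(\ref{eq:propcool}) is no longer available as stated (under that reading the analogous facts hold with $E(\varphi)^{\star}$ in place of $E(\varphi)$), and your intermediate claim ``$E(\psi)^{\star}\leq{}^{\sigma}E(\varphi)$ by unitriangularity applied to the column $\psi$'' is the wrong way round: nonvanishing of $d_{{}^{\sigma}E(\varphi),\psi}$ would only give ${}^{\sigma}E(\varphi)\leq\Psi^{-1}(\psi)$. So in either consistent convention the step you flag as ``the main obstacle'' cannot be completed as posed. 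The correct and much shorter finish is the paper's: order the rows by $E(\varphi_1)\leq\cdots\leq E(\varphi_r)$ and show $d_{\chi_i,\widetilde\varphi_j}=0$ for $i<j$; this is immediate from your identity, because both $\Psi^{-1}(\psi_j)$ and $\Psi^{-1}({}^{\sigma}\psi_j)$ lie in $\{E(\varphi_j),E(\varphi_j)^{\star}\}$ and $E(\varphi_i)<E(\varphi_j)\leq E(\varphi_j)^{\star}$ kills both terms at once, with no case analysis and no contradiction argument.
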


\begin{proof}
    We label $\mathcal R_{\mathfrak b} = \{\widetilde{\varphi}_1, \ldots
,\widetilde{\varphi}_r\}$ so that
$$E(\varphi_1)\leq E(\varphi_2) \leq \cdots \leq E(\varphi_r).$$
For any $1\leq i\leq r$, set
$$\chi_i=\Ind_H^G(E(\varphi_i)).$$
Then $\chi_i\in\mathcal T$ by Theorem~\ref{thm:proprietecoolH}. Set
\begin{equation}
\label{eq:ensembleTb}
\mathcal T_{b}=\{\chi_1,\ldots,\chi_r\}.
\end{equation}
    We remark that the elements of $\mathcal T_b$ are pairwise distinct.
Indeed, if $\chi_i = \chi_j$ for $i\neq j$, then $E(\varphi_i) =
{}^{\sigma}E(\varphi_j)$. However, Theorem~\ref{thm:proprietecoolH} gives
that $E(\varphi_i) \leq {}^{\sigma}E(\varphi_i) = E(\varphi_j)$ and
$E(\varphi_j) \leq {}^{\sigma}E(\varphi_j) = E(\varphi_i)$. Hence,
$E(\varphi_i) = E(\varphi_j) = {}^{\sigma}E(\varphi_i)$, which is a
contradiction. In particular, $|\mathcal T_b| = r =|\mathcal R_b|$.

    Now, we define an order on $\mathcal T_{b}$ by setting $\chi_i\leq
\chi_j$ whenever $i\leq j$, and we set
$$\Theta:\mathcal T_{b}\longrightarrow \mathcal R_{\mathfrak b},\
\chi_i\longmapsto \widetilde{\varphi}_i.$$
The map $\Theta$ is well-defined and surjective by construction. Hence, it
is bijective by cardinality. Now, we prove that $D_{\mathcal T_{b},
\mathcal R_{\mathfrak b}}$ is unitriangular with respect to the datum
$(\leq,\Theta)$.
First, by Theorem~\ref{thm:proprietecoolH}, we have, for all $1\leq i\leq
r$
$$d_{\chi_i,\Theta(\chi_i)}=d_{\Ind_H^G(E(\varphi_i)),\Ind_H^G(\varphi_i)}=1.$$
Assume $i\leq j$ (in particular $\chi_i\leq \chi_j)$. Then
\begin{align*}
d_{\chi_i,\widetilde{\varphi}_j}&=\langle
\chi_i,\Phi_{\widetilde{\varphi_j}}\rangle_G\\
&=\langle
E(\varphi_i),\Res_H^G(\Phi_{\widetilde{\varphi}_j})\rangle_H\quad\text{(by
Frobenius Reciprocity)}\\
&=\langle
E(\varphi_i),\Phi_{\varphi_j}+\Phi_{{}^{\sigma}\varphi_j}\rangle_H\quad\text{(by
Proposition~\ref{prop:cliffpims})}\\
&=\langle E(\varphi_i),\Phi_{\varphi_j})\rangle_H +\langle
E(\varphi_i),\Phi_{{}^{\sigma}\varphi_j}\rangle_H\\
&=\langle E(\varphi_i),\Phi_{\Psi(E(\varphi_j))})\rangle_H +\langle
E(\varphi_i),\Phi_{{}^{\sigma}\Psi(E(\varphi_j))}\rangle_H\\
&=d_{E(\varphi_i),\Psi(E(\varphi_j))}+d_{E(\varphi_i),\Psi(E(\varphi_j)^*)}.
\end{align*}
However, $E(\varphi_i)\leq E(\varphi_j)\leq E(\varphi_j)^*$, hence
$$d_{E(\varphi_i),\Psi(E(\varphi_j))}=0\quad\text{and}\quad
d_{E(\varphi_i),\Psi(E(\varphi_j)^*)}=0.$$
    This proves that $d_{\chi_i,\widetilde{\varphi}_j}=0$ whenever
$\chi_i\leq \chi_j$, as required.
\end{proof}

\begin{Th}
\label{thm:basicHG}
Let $G$ be a finite group and $H$ be an index $2$ subgroup of $G$. Let $p$
be an odd prime number. Assume Hypothesis~\ref{hyp:cadre} is satisfied,
and that $\mathfrak b$ has a unitriangular $p$-basic set $(b,\leq,\Psi)$.
Let $\mathfrak B$ be the union of all $p$-blocks of $G$ covering $\mathfrak b$.
Then $\mathfrak B$ has a unitriangular $p$-basic set.
\end{Th}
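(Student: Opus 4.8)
The plan is to build a unitriangular $p$-basic set of $\mathfrak B$ by combining the characters $\mathcal T_b$ produced in Proposition~\ref{prop:unitriangulairerestriction} with enough other characters of $\mathfrak B$ to account for the Brauer characters of $\mathfrak B$ not lying in $\mathcal R_{\mathfrak b}$. First I would analyze how $\IBr_p(\mathfrak B)$ decomposes: by Proposition~\ref{prop:cliffpims} and Clifford theory, a Brauer character $\widetilde\varphi\in\IBr_p(G)$ restricts to $H$ either irreducibly (giving one Brauer character of $\mathfrak b$, possibly equal to its $\sigma$-conjugate or not) or splits into two conjugate Brauer characters of $\mathfrak b$; the latter are exactly the $\widetilde\varphi\in\mathcal R_{\mathfrak b}$. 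So $|\IBr_p(\mathfrak B)| = |\mathcal R_{\mathfrak b}| + (\text{number of } \widetilde\varphi \text{ restricting irreducibly to } \mathfrak b)$. Since $b$ is a basic set of $\mathfrak b$, the pairs $\{\varphi,{}^\sigma\varphi\}$ for $\varphi\in\mathcal S_{\mathfrak b}$ cover all the split constituents, and the remaining Brauer characters of $\mathfrak b$ (those fixed by $\sigma$ or whose $\sigma$-orbit maps to a single $\IBr_p(G)$-character) are in bijection with the non-split part. This bookkeeping is the technical heart and where I expect the main obstacle: one must carefully match $\sigma$-orbits on $\IBr_p(\mathfrak b)$ with $\IBr_p(\mathfrak B)$, keeping track of which characters of $b$ are already "used'' by $\mathcal T_b$.

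Next I would produce the complementary set of characters. For each $\widetilde\varphi\in\IBr_p(\mathfrak B)$ with $\Res_H^G(\widetilde\varphi)=\varphi$ irreducible, the corresponding Brauer character $\varphi$ of $\mathfrak b$ equals $\Psi(\mu)$ for a unique $\mu\in b$; I would like to take $\chi_\mu\in\Irr(G)$ lying above $\mu$ with $d_{\chi_\mu,\widetilde\varphi}=1$. When $\varphi$ is $\sigma$-stable, $\mu$ may or may not be $\sigma$-stable, but in either case Clifford theory relates characters of $G$ above $\mu$ to those of $H$; the relation $d_{\chi,\widetilde\varphi}$ versus $d_{\mu,\varphi}$ must be unwound as in the proof of Theorem~\ref{thm:basicGH} (using $\Ind_H^G$ and Frobenius reciprocity). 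The candidate set is then $B = \mathcal T_b \sqcup \{\chi_\mu : \mu \in b\setminus\mathcal C_b\}$ (roughly), with the order inherited from stacking: order the $\chi_\mu$ by the order of $\mu$ in $b$, and interleave/append $\mathcal T_b$ using the order from Proposition~\ref{prop:unitriangulairerestriction}, refining so that $\Theta$ respects $\leq$. I would define $\Theta: B\to\IBr_p(\mathfrak B)$ by $\Theta(\chi_i)=\widetilde\varphi_i$ on $\mathcal T_b$ (as in that proposition) and $\Theta(\chi_\mu)=\widetilde{\Psi(\mu)}$ (the $\IBr_p(G)$-character restricting to $\Psi(\mu)$) on the complement; surjectivity holds by the cardinality count above, hence bijectivity.

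Finally I would verify the two unitriangularity conditions. Condition (1), $d_{\chi,\Theta(\chi)}=1$: for $\chi\in\mathcal T_b$ this is exactly Theorem~\ref{thm:proprietecoolH}; for $\chi=\chi_\mu$ it follows from $d_{\mu,\Psi(\mu)}=1$ together with the Clifford-theoretic relation between $d_{\chi_\mu,\widetilde{\Psi(\mu)}}$ and $d_{\mu,\Psi(\mu)}$ (one checks the induction/restriction does not inflate the coefficient, using $d_{\mu,\Psi(\mu)}=1$ and that other summands vanish by the triangularity of $b$ and the choice of $\chi_\mu$ minimal above $\mu$). Condition (2), $d_{\chi,M}=0$ unless $\chi\le\Theta^{-1}(M)$: for each pair I would reduce, via Frobenius reciprocity and Proposition~\ref{prop:cliffpims}, to a sum of decomposition numbers $d_{\nu,\tau}$ of $H$ with $\nu\le$ (the preimage in $b$) and invoke unitriangularity of $(b,\le,\Psi)$, exactly mirroring the case analysis in the proof of Theorem~\ref{thm:basicGH} (four cases: both characters $\sigma$-stable vs.\ split, etc.). The delicate point is consistency of the global order on $B$ with the orders used in the two sub-arguments; I would handle this by choosing the total order on $B$ to be \emph{any} linear refinement of the partial order "$\Theta(\chi)$ precedes $\Theta(\chi')$ in a fixed compatible ordering of $\IBr_p(\mathfrak B)$ lifting the order on $b$'', and check that every vanishing statement needed only depends on this coarser information. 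The main obstacle, as noted, is the combinatorial matching of $\sigma$-orbits in Step~1 together with verifying that the complementary characters $\chi_\mu$ genuinely lie in $\Irr(\mathfrak B)$ and satisfy $d_{\chi_\mu,\Theta(\chi_\mu)}=1$; once that is in place, the triangularity verification is a routine adaptation of the earlier proofs.
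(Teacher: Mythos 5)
Your overall skeleton is the paper's: keep the characters $\mathcal T_b$ coming from Proposition~\ref{prop:unitriangulairerestriction} (equivalently, the irreducible inductions $\Ind_H^G(E(\varphi))$), complete them by characters of $G$ lying above the remaining members of $b$, order everything compatibly with $\leq$ on $b$, and check unitriangularity by Frobenius reciprocity exactly as in Theorem~\ref{thm:basicGH}. However, there is a genuine gap at the point you yourself call the main obstacle, and it is more than bookkeeping. By Proposition~\ref{prop:cliffpims} (with $p$ odd), a $\sigma$-stable Brauer character $\varphi\in\IBr_p(\mathfrak b)$ is \emph{not} matched with a single Brauer character of $\mathfrak B$: it lies under two distinct irreducible Brauer characters $\widetilde{\varphi}^+$ and $\widetilde{\varphi}^-=\varepsilon\otimes\widetilde{\varphi}^+$ of $G$, both restricting to $\varphi$. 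So the ``bijection'' claimed in your first step is false, your assignment $\Theta(\chi_\mu)=\widetilde{\Psi(\mu)}$ (``the $\IBr_p(G)$-character restricting to $\Psi(\mu)$'') is not well defined, and your candidate set $\mathcal T_b\sqcup\{\chi_\mu \mid \mu\in b\setminus\mathcal C_b\}$, with one character per $\mu$, falls short of $|\IBr_p(\mathfrak B)|$ by exactly the number of $\sigma$-stable characters of $\IBr_p(\mathfrak b)$; no choice of order can make it a unitriangular basic set.

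The repair, which is the paper's proof, requires two ingredients you leave open. First, for $\mu\in b\setminus\mathcal C_b$ with $\Psi(\mu)$ $\sigma$-stable one needs $\mu$ itself to be $\sigma$-stable (you explicitly say it ``may or may not be''): only then does $\Ind_H^G(\mu)$ split into two constituents $\chi_\mu^{\pm}$, providing the two rows needed for the two columns $\widetilde{\varphi}^{\pm}$; the paper records this as the ``Note'' opening the proof (the non-$\sigma$-stability of the $E(\varphi)$'s being Theorem~\ref{thm:proprietecoolH}). Second, one must pin down the pairing $\chi_\mu^{\pm}\leftrightarrow\widetilde{\varphi}^{\pm}$ so that the corresponding $2\times 2$ diagonal block is the identity: the computation
$$d_{\chi_\mu^+,\widetilde{\varphi}^{\pm}}+d_{\chi_\mu^-,\widetilde{\varphi}^{\pm}}=\langle \Ind_H^G(\mu),\Phi_{\widetilde{\varphi}^{\pm}}\rangle_G=\langle \mu,\Phi_{\Psi(\mu)}\rangle_H=1$$
forces a $0/1$ split, and the $\varepsilon$-action, which swaps $\chi_\mu^{\pm}$ and swaps $\Phi_{\widetilde{\varphi}^{\pm}}$, then determines the complementary entries after a choice of labels; this is (\ref{eq:labeling}) and the sentence following it, and it is precisely what makes $\Theta$ well defined and $\varepsilon$-equivariant. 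Granting these two points, the rest of your plan (take $B$ to be the constituents of the inductions of $b\setminus\{E(\varphi)^*\mid\widetilde{\varphi}\in\mathcal R_{\mathfrak b}\}$, order them by the order inherited from $b$ — so $\mathcal T_b$ is interleaved, not appended — and kill the off-diagonal entries using $\psi_i\leq E(\varphi_j)\leq E(\varphi_j)^*$ together with unitriangularity of $b$) is indeed the routine case-by-case verification you describe and coincides with the paper's argument.
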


\begin{proof}
    We keep Notation~\ref{notation:choixH}, and define $E_b =
\Irr(\mathfrak b) \backslash \{E(\varphi)^* \mid \widetilde{\varphi} \in
\mathcal R_{\mathfrak b}\}$. Suppose that $E_b = \{\psi_1,\ldots,\psi_s\}$
with
$$\psi_1\leq\psi_2\leq\cdots \leq \psi_s.$$
Note that, for all $1 \leq i\leq s$, we have ${}^{\sigma}\psi_i = \psi_i$
if and only if $\Phi_{\Psi(\psi_i)} = {}^{\sigma}\Phi_{\Psi(\psi_i)}$. For
any $1\leq i\leq s$, we write $\widetilde{\varphi}_i^+$ and
$\widetilde{\varphi}_i^-$ for the constituents of
$\Ind_{H}^{G}(\Psi(\psi_i))$ if $\psi_i$ is $\sigma$-stable, and
$\widetilde{\varphi}_i = \Ind_H^G(\Phi(\psi_i))$ if $\psi_i$ is not
$\sigma$-stable. Moreover, we also write
$$\Res_H^G(\widetilde{\varphi_i}^+)=\varphi_i\quad\text{and}\quad\Res_H^G(\widetilde{\varphi_i})=\varphi_i+{}^{\sigma}\varphi_i.$$
Assume that $\Ind_{H}^G(\psi_i)$ has two constituents, $\chi_i^+$ and
$\chi_i^-$. Then
\begin{equation}
\label{eq:labeling}
\begin{array}{rcl}
 d_{\chi_i^+,\widetilde{\varphi}_i^{\pm}}^{\pm}+d_{\chi_i^-,\widetilde{\varphi}_i^{\pm}}^{\pm}&=&
 \langle
 \Ind_H^G(\psi_i),\Phi_{\widetilde{\varphi}_i^{\pm}}\rangle_G\\
 &=&\langle
 \psi_i,\Res_H^G(\Phi_{\widetilde{\varphi}_i^{\pm}})\rangle_H\\
 &=&\langle
 \psi_i,\Phi_{\varphi_i}\rangle_H\\
 &=&1.
 \end{array}
\end{equation}
Since these are non-negative integers, one has to be equal to $1$ and the
other to $0$. We then choose the labeling such that $d_{\chi_i^+,
\widetilde{\varphi}_i^+} = 1$ and $d_{\chi_i^-, \widetilde{\varphi}_i^-} =
0$. Furthermore, since the characters of $\{\chi_i^+, \chi_i^-\}$ and of the
set  $\{\Phi_{\widetilde{\varphi}_i^+}, \Phi_{\widetilde{\varphi}_i^-}\}$
are interchanged by $\varepsilon$, we deduce that $d_{\chi_i^+,
\widetilde{\varphi}_i^-} = 0$ and $d_{\chi_i^-, \widetilde{\varphi}_i^-} =
1$.

    We define $B$ as the set of constituents of the $\Ind_H^G(\psi_i)$'s for $1
\leq i \leq s$. We order $B$ so that the natural order of the indices is 
respected and $\chi_i^+ \leq \chi_i^-$. Finally, we set
\begin{equation}
\label{eq:defthetaHG}
\begin{array}{cccc}
\Theta:&B&\longrightarrow &\IBr_p(\mathfrak B),\\
&\chi_i^{\pm}&\longmapsto& \widetilde{\varphi}_i^{\pm}\\
&\chi_i&\longmapsto &\widetilde{\varphi}_i
\end{array}.
\end{equation}
    Now, we prove that $B$ is a unitriangular $p$-basic set of $\mathfrak
B$ with respect to $(\leq,\Theta)$. We already checked  in
Proposition~\ref{prop:unitriangulairerestriction} that if $\chi_i$ and
$\chi_j$ are $\varepsilon$-stable and such that $\chi_i\leq\chi_j$, then
$d_{\chi_i,\Theta(\chi_i)}=1$ and $d_{\chi_i,\Theta(\chi_j)}=0$. 
Suppose $1\leq i< j\leq s$.
\begin{itemize}
\item Assume that $i$ labels two non
$\varepsilon$-stable characters $\chi_i^+$ and $\chi_i^-$. If $\chi_j$ is
$\varepsilon$-stable, then so is $\widetilde{\varphi}_j$ and
\begin{align*}
 d_{\chi_i^+,\widetilde{\varphi}_j}+d_{\chi_i^-,\widetilde{\varphi}_j}&=
 \langle \Ind_H^G(\psi_i),\Phi_{\widetilde{\varphi}_j}\rangle_G\\&=\langle
 \psi_i,\Res_H^G(\Phi_{\widetilde{\varphi}_j})\rangle_H\\&=\langle
 \psi_i,\Phi_{\varphi_j}\rangle_H+\langle
 \psi_i,\Phi_{{}^{\sigma}\varphi_j}\rangle_H\\&=0,
\end{align*}
because $\psi_j=E(\varphi_j)$, whence $\psi_i\leq E(\varphi_j)\leq E(\varphi_j)^*$.
It follows that
$d_{\chi_i^+,\widetilde{\varphi}_j}=0=d_{\chi_i^-,\widetilde{\varphi}_j}$
since both are non-negative integers.
If $j$ labels two non $\varepsilon$-stable characters, then $\Res_H^G
\Phi_{\widetilde{\varphi}_j^+}=\Res_H^G
\Phi_{\widetilde{\varphi}_j^-}=\Phi_{\varphi_j}$. Hence, an analogue
computation as above shows that
$$\begin{array}{rcl}
 d_{\chi_i^+,\widetilde{\varphi}_j^{\pm}}^{\pm}+d_{\chi_i^-,\widetilde{\varphi}_j^{\pm}}^{\pm}&=&
 \langle
 \Ind_H^G(\psi_i),\Phi_{\widetilde{\varphi}_j^{\pm}}\rangle_G\\
 &=&\langle
 \psi_i,\Res_H^G(\Phi_{\widetilde{\varphi}_j^{\pm}})\rangle_H\\
 &=&\langle
 \psi_i,\Phi_{\varphi_j}\rangle_H\\
 &=&0
 \end{array}
 $$
because $\psi_i\leq\psi_j$.
Finally, by the choice of our labeling (see the discussion
after~(\ref{eq:labeling})), we have
$d_{\chi_i^{\pm},\Phi_{\widetilde{\varphi}_i^{\pm}}}=1$ and
$d_{\chi_i^+,\Phi_{\widetilde{\varphi}_i^-}}=0$.
\item Assume $\chi_i$ is $\varepsilon$-stable and $j$ labels two non
$\varepsilon$-stable characters. Then
$$\begin{array}{rcl}
d_{\chi_i,\widetilde{\varphi}_j^{\pm}}&=&\langle
\chi_i,\Phi_{\widetilde{\varphi}_j^{\pm}}\rangle_G\\
&=&
\langle
\Ind_H^G(\psi_i),\Phi_{\widetilde{\varphi}_j^{\pm}}\rangle_G\\
&=&
\langle \psi_i,\Res_H^G(\Phi_{\widetilde{\varphi}_j^{\pm}})\rangle_H\\
&=&
\langle \psi_i,\Phi_{\varphi_j}\rangle_H=d_{\psi_i,\varphi_j}\\
&=&0.\end{array}$$ 
\end{itemize}
This proves the result.
\end{proof}

\begin{exa}
\label{ex:A6S6}
    Consider the unitriangular $3$-basic set $(b,\leq,\Psi)$ of the
principal block $\mathfrak b_{0}$ of $\alt_6$ obtained in
Example~\ref{ex:S6A6}, where $b=\{\psi_2,\psi_4,\psi_5^+,\psi_5^-\}$. We
have $\mathcal R_{\mathfrak b_{0}}=\{\chi_5\}$ and 
$$E(\varphi_5^+)=E(\varphi_5^-)=\psi_5^+\quad\text{and}\quad 
E(\varphi_5^+)^*=E(\varphi_5^-)^*=\psi_5^-.$$
Applying Theorem~\ref{thm:basicHG}, we obtain a unitriangular $3$-basic
set of $\mathfrak B_{0}$ with decomposition matrix of the form

{\tikzstyle{every picture}+=[remember picture]
\renewcommand{\arraystretch}{1.2}
\renewcommand{\arraycolsep}{2pt}
$$
\begin{array}{c|p{1pt}cccccp{1pt}|}
\cline{2-2}
\cline{8-8}
&  & 1 & 0 & 0 & 0 & 0 &  \\
&  & 0 & 1 & 0 & 0 & 0 &  \\
&  & * & * & 1 & 0 & 0 &  \\
&  & * & * & 0 & 1 & 0 &  \\
&  & 1 & 1 & 1 & 1 & 1 &  \\
\cline{2-2}
\cline{8-8}
\end{array}.
$$
}

Note that, even though we deduce the existence of a unitriangular $3$-basic set of $\mathfrak
B_{0}$, Theorem~\ref{thm:basicHG} does not give its complete
associated decomposition matrix. Since $d_{\psi_4,\Phi_{\varphi_2}}=1$, we
only know that the missing
values are either $\begin{pmatrix}
1&0\\
0&1
\end{pmatrix}$ or $\begin{pmatrix}
0&1\\
1&0
\end{pmatrix}$. 
\end{exa}

\begin{Rem} 
\label{rk:equivalence}
    The $p$-basic set $\widetilde{b}$ of $\mathfrak B$ constructed from $b$ in
Theorem~\ref{thm:basicHG} is $\varepsilon$-stable, and its
$\varepsilon$-stable characters lie in $\mathcal T$ by construction.
Furthermore, the bijection $\Theta$ is $\varepsilon$-equivariant
by~(\ref{eq:defthetaHG}). 
    In particular, $\widetilde{b}$ satisfies the assumptions of
Theorem~\ref{thm:basicGH}. Hence, if $\mathfrak b$ has a unitriangular
$p$-basic set, then $\mathfrak b$ has a unitriangular $p$-basic set that
comes from the restriction of an $\varepsilon$-stable $p$-basic set of
$\mathfrak B$. However, note that in general 
$$\Res_H^G(\widetilde{b})\neq b.$$
\end{Rem}

\begin{exa}
\label{ex:passtable}
    The principal $3$-block $\mathfrak b_{0}$ of $\alt_6$ has an
$\varepsilon$-stable character $\psi_6$ of degree $5$ whose
modular restriction with respect to the labeling of $\IBr_3(\mathfrak
b_{0})$ given in Example~\ref{ex:S6A6} is 
 $$[0\ 1\ 1\ 1].$$
    In particular, $b'=\{\psi_2,\psi_4,\psi_5^+,\psi_6\}$ is also a
unitriangular $3$-basic set of $\mathfrak b_{0}$. Note that, when
we apply Theorem~\ref{thm:basicHG} with $b'$, we obtain the same
unitriangular basic set of $\mathfrak B_0$ as that of
Example~\ref{ex:A6S6}.
\end{exa}

\section{Consequences for the $p$-basic sets of the alternating groups}
\label{sec:alterne}

\subsection{Ordinary and modular representations}

    Let $p$ be a prime number and $n$ be a positive integer. For this
section, we refer to~\cite{JK} for more details. For any
$\lambda\in\Pi^1(n)$ we denote by $\chi_{\lambda}$ the character of the
simple module $S^{\lambda}$ given in~(\ref{eq:sym}). We write $\lambda'$
for the conjugate partition of $\lambda$ and recall that $S^{\lambda'} =
\operatorname{sgn} \otimes S^{\lambda}$, where $\operatorname{sgn}$ is the
sign character of $\sym_n$. Furthermore, the irreducible characters of
$\alt_n$ can be described from those of $\sym_n$ as follows. If $\lambda \neq
\lambda'$, then $\Res_{\alt_n}^{\sym_n}(\chi_{\lambda}) =
\Res_{\alt_n}^{\sym_n}(\operatorname{sgn} \otimes \chi_{\lambda})$ is
irreducible and denoted by $\rho_{\lambda}$. If $\lambda = \lambda'$, then
$\Res_{\alt_n}^{\sym_n}(\chi_{\lambda})$ splits into two irreducible
characters $\rho_{\lambda}^+$ and $\rho_{\lambda}^-$ of $\alt_n$. These two
class functions take the same values everywhere, except on the elements with cycle
structure given by the partition $\overline{\lambda}$, whose parts are the
diagonal hook lengths of $\lambda$. These elements form a single
conjugacy class in $\sym_n$ which splits into two classes
$\overline\lambda_-$ and $\overline\lambda_+$ of $\alt_n$, and,
following~\cite[Theorem 2.5.13]{JK}, the notation can be chosen such that 
$$\rho_{\lambda}^{\pm}(t_{\overline\lambda_+}) = x_{\lambda}\pm
y_{\lambda}\quad \text{and} \quad
\rho_{\lambda}^{\pm}(t_{\overline\lambda_-}) = x_{\lambda}\mp
y_{\lambda},$$
where $t_{\overline\lambda_{+}}$ (resp. $t_{\overline\lambda_{-}}$) is a representative of the class
$\overline\lambda_{+}$ (resp. $\overline\lambda_{-}$) of $\alt_n$, and, if $\overline\lambda =
(d_1,d_2,\ldots,d_k)$, then
\begin{equation}
\label{eq:valAn}
x_{\lambda}=\frac 1 2(-1)^{(n-k)/2}\quad\text{and}\quad y_{\lambda}=
\frac 1 2\sqrt{(-1)^{(n-k)/2}d_1\cdots d_k}.
\end{equation}

\begin{lemma}
\label{lemme:TSn}
Let $p$ be an odd prime number. The set $\mathcal T$ defined on
page~\pageref{def:defT} for $\alt_n$ is 
$$\mathcal T=\{\chi_{\lambda}\mid \lambda\in\mathcal G\},$$
where $\mathcal G$ is the set of self-conjugate partitions none of whose
diagonal hooks has length divisible by $p$.
\end{lemma}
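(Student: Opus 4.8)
The plan is to unravel Definition of $\mathcal{T}$ for $G=\sym_n$, $H=\alt_n$ and compare it with the explicit description of the irreducible characters of $\alt_n$ recalled just before the lemma. Recall that $\mathcal{T}$ consists of the $\varepsilon$-stable (here $\operatorname{sgn}$-stable) irreducible characters $\chi$ of $\sym_n$ such that the two constituents $\chi^+,\chi^-$ of $\Res_{\alt_n}^{\sym_n}(\chi)$ satisfy $\widehat{\chi^+}\neq\widehat{\chi^-}$. Now $\chi_\lambda = \operatorname{sgn}\otimes\chi_\lambda$ exactly when $\lambda=\lambda'$, i.e. $\lambda$ is self-conjugate, since $\operatorname{sgn}\otimes\chi_\lambda=\chi_{\lambda'}$. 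So $\mathcal{T}\subseteq\{\chi_\lambda\mid \lambda=\lambda'\}$, and for such $\lambda$ the two constituents are precisely $\rho_\lambda^+$ and $\rho_\lambda^-$. The remaining task is to decide, among self-conjugate $\lambda$, for which ones $\widehat{\rho_\lambda^+}\neq\widehat{\rho_\lambda^-}$.

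First I would use the explicit character values recalled in the excerpt: $\rho_\lambda^+$ and $\rho_\lambda^-$ agree on every conjugacy class of $\alt_n$ except on the two classes $\overline\lambda_+$ and $\overline\lambda_-$ into which the class with cycle type $\overline\lambda=(d_1,\ldots,d_k)$ (the diagonal hook lengths of $\lambda$) splits, where they take the values $x_\lambda\pm y_\lambda$ and $x_\lambda\mp y_\lambda$ with $x_\lambda,y_\lambda$ as in~(\ref{eq:valAn}). Hence $\rho_\lambda^+$ and $\rho_\lambda^-$ differ as ordinary characters precisely when $y_\lambda\neq 0$, which always holds since the $d_i$ are positive and distinct (distinct diagonal hook lengths of a self-conjugate partition). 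But passing to $\widehat{\rho_\lambda^{\pm}}$ we zero out the values on all $p$-singular classes, so $\widehat{\rho_\lambda^+}=\widehat{\rho_\lambda^-}$ if and only if both $\overline\lambda_+$ and $\overline\lambda_-$ consist of $p$-singular elements, i.e. if and only if some $d_i$ is divisible by $p$ (an element of cycle type $(d_1,\ldots,d_k)$ is $p$-regular iff no $d_i$ is divisible by $p$). Therefore $\widehat{\rho_\lambda^+}\neq\widehat{\rho_\lambda^-}$ exactly when no diagonal hook length of $\lambda$ is divisible by $p$, which is the defining condition of $\mathcal{G}$.

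The one point needing a little care, and the main (minor) obstacle, is the equivalence ``$t_{\overline\lambda_\pm}$ is $p$-regular $\iff$ no $d_i$ is divisible by $p$'': an element of $\sym_n$ with cycle type $(d_1,\ldots,d_k)$ (together with fixed points) has order $\operatorname{lcm}(d_i)$, so it is $p$-regular iff $p\nmid d_i$ for all $i$; this is unaffected by the splitting of the class in $\alt_n$ since the two $\alt_n$-classes contain elements conjugate in $\sym_n$ to each other and hence of the same order. Also one should note that if $\rho_\lambda^+$ and $\rho_\lambda^-$ agree on the classes $\overline\lambda_\pm$ as well (which never happens here, as $y_\lambda\neq 0$) then a fortiori $\widehat{\rho_\lambda^+}=\widehat{\rho_\lambda^-}$; since $y_\lambda\neq 0$ always, the only mechanism forcing equality of the $\widehat{\rho_\lambda^{\pm}}$ is the vanishing coming from $p$-singularity. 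Assembling these observations gives $\mathcal{T}=\{\chi_\lambda\mid\lambda\in\mathcal{G}\}$ with $\mathcal{G}$ as stated, completing the proof.
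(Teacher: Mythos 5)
Your proposal is correct and follows essentially the same route as the paper: unwind the definition of $\mathcal T$ to reduce to self-conjugate $\lambda$, use that $\rho_\lambda^+$ and $\rho_\lambda^-$ agree off the split classes $\overline\lambda_\pm$, and observe that these classes are $p$-regular precisely when no diagonal hook length is divisible by $p$, in which case the values $x_\lambda\pm y_\lambda$ from~(\ref{eq:valAn}) differ since $y_\lambda\neq 0$. Your extra remarks (the order of an element of cycle type $\overline\lambda$ being $\operatorname{lcm}(d_i)$, and the two $\alt_n$-classes sharing the same $p$-regularity) are correct fillings-in of points the paper leaves implicit.
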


\begin{proof}
    For any self-conjugate partition $\lambda$, if $p$ divides a part of
$\overline \lambda$, then the corresponding class is $p$-singular. In
particular, $\widehat {\rho}_{\lambda}^{\pm}(t_{\overline\lambda_-}) = 0 =
\widehat {\rho}_{\lambda}^{\pm}(t_{\overline\lambda_+})$, and
$\widehat{\rho}_{\lambda}^+ = \widehat{\rho}_{\lambda}^-$. Thus,
$\chi_{\lambda} \notin \mathcal T$. Now, if $\lambda\in\mathcal G$, then
$\overline\lambda_+$ and $\overline\lambda_-$ label $p$-regular classes of
$\alt_n$. The values of $\widehat{\rho}^{+}_{\lambda}$ and
$\widehat{\rho}^{-}_{\lambda}$ on these classes are distinct
by~(\ref{eq:valAn}), and the result follows.
\end{proof}

    By \cite{JK}, the simple $\sym_n$-modules in characteristic $p$
are labeled by the set of $p$-regular partitions $\mathcal R_n^p$ of $n$.
Write $D^{\mu}$ for the simple $\sym_n$-module labeled by
$\mu\in\mathcal R_n^p$ and $\varphi_{\mu}$ for its Brauer character. For
any $p$-regular partition $\mu$, the Mullineux partition $m(\mu)$
associated to $\mu$ is the $p$-regular partition such that
$$\operatorname{sgn}\otimes D^{\mu}\simeq D^{m(\mu)}.$$ 
    We denote by $\mathcal{M}_n^p$ the set of $p$-regular partitions $\mu$
such that $m(\mu) = \mu$. Since $p$ is odd, and following the notation of
\S\ref{sub:decompositionindectwo}, we then have
\begin{equation}
\label{eq:brauerAn}
\IBr_p(\alt_n)=\{\varphi_{\mu}\mid \mu\neq
m(\mu)\}\cup\{\varphi_{\mu}^{\pm}\mid \mu\in\mathcal M_n^p\}. 
\end{equation}
    We write $\sigma$ for conjugation by the transposition $(1\ 2)$.
In particular, for any self-conjugate partition $\lambda\in\Pi_n^1$ and
$\mu\in \mathcal M_n^p$,
$${}^{\sigma}\rho_{\lambda}^{\pm}=\rho_{\lambda}^{\mp} \quad \text{and}
\quad {}^{\sigma}\varphi_{\mu}^{\pm}=\varphi_{\mu}^{\mp}.$$

    Now, we recall the construction of the $p$-blocks of $\sym_n$ and
$\alt_n$. To any partition $\lambda$ of $n$, we can associate its $p$-core
$\lambda_{(p)}$ and its $p$-quotient $\lambda^{(p)} = (\lambda^1, \ldots,
\lambda^p) \in \Pi^p(w)$, where $\Pi^p(n)$ is the set of
$p$-multipartitions of $n$ and $w=(n-|\lambda_{(p)}|)/p$. Note that the
map
\begin{equation}
\label{eq:corequotient}
\lambda\mapsto (\lambda_{(p)},\lambda^{(p)})
\end{equation} is bijective.
Recall that (see for example~\cite[Lemma 3.1]{BG})
\begin{equation}
\label{eq:conjcorequotient}
(\lambda')_{(p)}=\lambda_{(p)}'\quad \text{and}\quad 
(\lambda')^{(p)}=\left((\lambda^p)',(\lambda^{p-1})',\ldots,(\lambda^1)'\right).
\end{equation}
    Furthermore, the Nakayama Conjecture~\cite[\S 6.1]{JK} asserts that two
irreducible characters $\chi_{\lambda}$ and $\chi_{\mu}$ lie in the same
$p$-blocks of $\sym_n$ if and only if $\lambda_{(p)} = \mu_{(p)}$. It
follows that the $p$-block of $\sym_n$ are parametrized by the $p$-cores
of partitions of $n$. In particular, by~(\ref{eq:corequotient}), the
irreducible characters lying in a $p$-block of $\sym_n$ corresponding to a
$p$-core $\gamma$ are labeled by the set $\Pi^p(w)$, where $w =
(n-|\gamma|)/p$ is called the \emph{$p$-weight} of the block.

    The $p$-blocks of $\alt_n$ can then be parametrized as follows. Let
$\gamma$ be the $p$-core of a partition of $n$. Write
$\mathfrak{B}_{\gamma}$ for the $p$-block of weight $w$ of $\sym_n$ corresponding to
$\gamma$, and $\mathfrak b_{\gamma} = \Res_{\alt_n}^{\sym_n}(\mathfrak
B_{\gamma})$. If $\gamma\neq\gamma'$ then $\mathfrak b_{\gamma} = \mathfrak b_{\gamma'}$ is a
$p$-block of $\alt_n$. If $\gamma = \gamma'$ and $w>0$, then the character
$\chi_{\lambda}$ of $\sym_n$ such that $\lambda_{(p)} = \gamma$ and
$\lambda^{(p)} = ((w),\emptyset,\ldots,\emptyset)$ is not
$\varepsilon$-stable by~(\ref{eq:conjcorequotient}). Hence $\mathfrak b_{\gamma}$ is
again a single $p$-block of $\alt_n$ by Proposition~\ref{prop:restrictbloc}.
Finally, when $w = 0$, the block $\mathfrak{B}_\gamma$ contains a unique
character whose restriction to $\alt_n$ splits into two irreducible
characters of $\alt_n$, which give two $p$-blocks of $\alt_n$ with defect
zero.\medskip

\begin{Rem}
The set $\mathcal T$ of Lemma~\ref{lemme:TSn} is described in~\cite[Lemma
3.4]{BG} in terms of the bijection~(\ref{eq:corequotient}). It is the set
$\mathcal C$ of self-conjugate partitions $\lambda\in\Pi_n^1$ whose $p$-quotient's $(p+1)/2$-th part is the empty partition (or,
equivalently, whose diagonal hook lengths are prime to $p$).
\end{Rem}

\begin{lemma}
\label{lemme:CSn}
Let $p$ be an odd prime number. Let $\gamma$ be a symmetric $p$-core of
$n$ labeling a $p$-block of $\sym_n$ with positive $p$-weight. 
\begin{enumerate}[(i)]
\item If $\mathfrak b_{\gamma}$ has a $p$-basic set $b$, then
the non $\sigma$-stable characters of $b$ are labeled by the set 
$\mathcal C_{\gamma}$ of partitions of $\mathcal C$ with $p$-core
$\gamma$. Moreover, any $\lambda\in\mathcal C_{\gamma}$ labels at least one non
$\sigma$-stable character of $b$.
\item If $\mathfrak B_{\gamma}$ has an $\varepsilon$-stable  
$p$-basic set $B$ which
restricts to a $p$-basic set of $\mathfrak b_{\gamma}$, then 
the set of $\varepsilon$-stable characters of $B$ is labeled by
$\mathcal C_{\gamma}$.
\end{enumerate}
\end{lemma}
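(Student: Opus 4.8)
The plan is to treat the two parts in parallel, using the $\varepsilon$-stable/$\sigma$-stable dictionary established in Section~\ref{sec:indice2} together with the combinatorial description $\mathcal T = \{\chi_\lambda\mid\lambda\in\mathcal C\}$ from Lemma~\ref{lemme:TSn} and the surrounding remark. For part (i): suppose $b$ is a $p$-basic set of $\mathfrak b_\gamma$. A non $\sigma$-stable character $\psi$ of $b$ has $\Ind_H^G(\psi)=\chi$ an irreducible $\varepsilon$-stable character of $\mathfrak B_\gamma$ (Clifford theory), and I claim $\chi\in\mathcal T$, i.e. $\widehat\psi\neq\widehat{{}^\sigma\psi}$: if instead $\widehat\psi = \widehat{{}^\sigma\psi}$, then since $\{\widehat\alpha\mid\alpha\in b\}$ is a $\Z$-basis of $\Z\IBr_p(\mathfrak b_\gamma)$ by Definition~\ref{def:basicset}, applying $\sigma$ and using ${}^\sigma\widehat\alpha = \widehat{{}^\sigma\alpha}$ would force $\psi = {}^\sigma\psi$ (the only way a basis element can be $\sigma$-fixed on the $\widehat{\ }$-level while $\sigma$ permutes the basis), a contradiction — so $\chi\in\mathcal T$ by the characterization on page~\pageref{def:defT}. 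Hence $\chi = \chi_\lambda$ with $\lambda\in\mathcal C$, and $\lambda$ has $p$-core $\gamma$ because $\chi_\lambda\in\mathfrak B_\gamma$; thus the non $\sigma$-stable characters of $b$ are labeled by a subset of $\mathcal C_\gamma$. Conversely, to see every $\lambda\in\mathcal C_\gamma$ occurs, I would count: by Remark~\ref{rk:defbasic} and the description~(\ref{eq:brauerAn}) of $\IBr_p(\alt_n)$, the number of $\sigma$-stable characters of $b$ equals $|\mathcal M_{n}^p\cap\mathfrak B_\gamma|$ (each such Brauer character $\varphi_\mu^{\pm}$ forces a pair in $b$, so the non-$\sigma$-stable part of $b$ has cardinality matching $|\IBr_p(\mathfrak b_\gamma)| - 2|\{\text{split pairs}\}|$ appropriately) — more cleanly, one applies Proposition~\ref{prop:nombre}-type bookkeeping to the pair $(\mathfrak B_\gamma,\mathfrak b_\gamma)$ to get that the number of non $\sigma$-stable characters of $b$ equals the number of $\varepsilon$-stable characters of any $\varepsilon$-stable $p$-basic set of $\mathfrak B_\gamma$, and by~\cite[Lemma~3.4]{BG} (the Remark preceding this lemma) that number is $|\mathcal C_\gamma|$. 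Combined with the injective labeling just established, every $\lambda\in\mathcal C_\gamma$ is used, proving (i).

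For part (ii): let $B$ be an $\varepsilon$-stable $p$-basic set of $\mathfrak B_\gamma$ that restricts to a $p$-basic set of $\mathfrak b_\gamma$. By Remark~\ref{rk:ensembleTrestriction}, the $\varepsilon$-stable (equivalently $\sigma$-stable) characters of $B$ lie in $\mathcal T$, hence are of the form $\chi_\lambda$ with $\lambda\in\mathcal C$, and being in $\mathfrak B_\gamma$ forces $\lambda_{(p)}=\gamma$, so they are labeled by a subset of $\mathcal C_\gamma$. For the reverse inclusion, I would again argue by cardinality: the $\varepsilon$-stable characters of $B$ are exactly the $\chi\in B$ whose restriction splits, and under the hypothesis that $\Res(B)$ is a $p$-basic set of $\mathfrak b_\gamma$, each such splitting contributes a pair to $\Res(B)$ while non-$\varepsilon$-stable characters of $B$ contribute one each; matching $|\Res(B)| = |\IBr_p(\mathfrak b_\gamma)|$ against $|B| = |\IBr_p(\mathfrak B_\gamma)|$ via~(\ref{eq:brauerAn}) shows the number of $\varepsilon$-stable characters of $B$ equals the number of $\mu\in\mathcal M_n^p$ with $\mu_{(p)}=\gamma$, which by the Mullineux-fixed-point count equals $|\mathcal C_\gamma|$ (this last equality is exactly the content of the Remark before the lemma, i.e.~\cite[Lemma~3.4]{BG}, identifying $\mathcal C_\gamma$ with the Mullineux-fixed points in the block). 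So the injective labeling is in fact a bijection onto $\mathcal C_\gamma$.

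The main obstacle I anticipate is the two counting arguments — pinning down precisely that $|\{\sigma\text{-stable chars of }b\}| = |\mathcal M_n^p\cap\mathfrak B_\gamma|$ in (i), and the analogous identity $|\{\varepsilon\text{-stable chars of }B\}| = |\mathcal M_n^p\cap\mathfrak B_\gamma|$ in (ii) — since these rely on combining Definition~\ref{def:basicset}, the block decomposition~(\ref{eq:brauerAn}), and the combinatorial identity $|\mathcal C_\gamma| = |\mathcal M_n^p\cap\mathfrak B_\gamma|$ from~\cite[Lemma~3.4]{BG}. The rank argument (that a $\sigma$-permuted $\Z$-basis whose $\widehat{\ }$-images are $\sigma$-fixed must itself be $\sigma$-fixed on the relevant elements) should be handled carefully but is not deep. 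Everything else is a direct application of Clifford theory for Brauer characters (Proposition~\ref{prop:cliffpims}) and the descriptions already recorded in this section.
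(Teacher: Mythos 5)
There is a genuine gap, and it sits exactly at the point the paper handles with its only real argument: the claim that \emph{every} $\lambda\in\mathcal C_{\gamma}$ labels a character of $b$. Your plan is to deduce this by a cardinality count, asserting that the number of non $\sigma$-stable characters of $b$ is forced (equal to the number of $\varepsilon$-stable characters of an $\varepsilon$-stable basic set of $\mathfrak B_{\gamma}$, hence to $|\mathcal C_{\gamma}|$), and then to conclude surjectivity from an ``injective labeling''. Neither half survives: a basic set of $\mathfrak b_{\gamma}$ may contain \emph{both} $\rho_{\lambda}^{+}$ and $\rho_{\lambda}^{-}$ for the same $\lambda$ (so the labeling is not injective), and consequently the number of non $\sigma$-stable members of $b$ is not an invariant of the block --- it can be anything between $|\mathcal C_{\gamma}|$ and $2|\mathcal C_{\gamma}|$. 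The paper's own examples show this: for the principal $3$-block of $\alt_6$ one has $|\mathcal C_{\emptyset}|=1$, yet $b=\{\psi_2,\psi_4,\psi_5^{+},\psi_5^{-}\}$ (Example~\ref{ex:A6S6}) has two non $\sigma$-stable characters while $b'=\{\psi_2,\psi_4,\psi_5^{+},\psi_6\}$ (Example~\ref{ex:passtable}) has one. So no counting argument (and in particular no ``Proposition~\ref{prop:nombre}-type bookkeeping'', which anyway is proved only for $\varepsilon$-stable basic sets of $G$, not for arbitrary basic sets of $H$) can show that every $\lambda\in\mathcal C_{\gamma}$ occurs. The paper's proof is instead local and elementary: if neither $\rho_{\lambda}^{+}$ nor $\rho_{\lambda}^{-}$ lies in $b$, expand $\widehat\rho_{\lambda}^{+}$ as a $\Z$-combination of $\widehat\psi$, $\psi\in b$ (Remark~\ref{rk:defbasic}), and evaluate at the two split classes $t_{\overline\lambda_{\pm}}$, which are $p$-regular because $\lambda\in\mathcal C_{\gamma}$; every $\psi\in b$ takes the same value on both classes, while $\widehat\rho_{\lambda}^{+}$ does not by~(\ref{eq:valAn}), a contradiction. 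This evaluation-at-split-classes idea is absent from your proposal and is the heart of the lemma.

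Two further points. First, your preliminary ``rank argument'' (that $\widehat\psi={}^{\sigma}\widehat\psi$ for $\psi\in b$ forces $\psi={}^{\sigma}\psi$ because $\sigma$ permutes the basis) is unjustified: $\sigma$ stabilizes the lattice $\Z\IBr_p(\mathfrak b_{\gamma})$ but has no reason to permute the chosen basis $\{\widehat\psi\mid\psi\in b\}$, since $b$ need not be $\sigma$-stable; the inclusion of the labels in $\mathcal C_{\gamma}$ should simply be read off from Lemma~\ref{lemme:TSn}, as the paper does. Second, in part (ii) your reverse inclusion again runs through a count, ultimately resting on the identity between the number of Mullineux-fixed $p$-regular partitions in the block and $|\mathcal C_{\gamma}|$; this is not ``the content of the Remark before the lemma'' (which only identifies $\mathcal T$ with $\mathcal C$) and is nowhere established in the paper, so as written it is an appeal to an external fact. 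The paper avoids it entirely: given the inclusion, part (i) applied to $b=\Res_{\alt_n}^{\sym_n}(B)$ produces, for each $\lambda\in\mathcal C_{\gamma}$, a character $\rho_{\lambda}^{\pm}\in b$, and the character of $B$ lying above it is $\chi_{\lambda}$, which is therefore an $\varepsilon$-stable member of $B$; this gives (ii) with no counting at all.
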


\begin{proof}
    Let $b$ be a $p$-basic set of $\mathfrak b_{\gamma}$. The non
$\sigma$-stable characters of $b$ have to be labeled by partitions of
$\mathcal C_{\gamma}$ by Lemma~\ref{lemme:TSn}. Let $\lambda\in\mathcal
C_{\gamma}$. Assume that $b$ contains neither $\rho_{\lambda}^+$ nor
$\rho_{\lambda}^-$.
    Since $b$ is a $p$-basic set of $\mathfrak b_{\gamma}$, by Remark~\ref{rk:defbasic}
there are $\psi_1,\ldots,\psi_r\in b$ and $a_1,\ldots,a_r\in\Z$ such that
\begin{equation}
\label{eq:basican}
\widehat{\rho}_{\lambda}^+ = \sum_{j = 1}^r a_j\widehat{\psi}_j.
\end{equation}
By Lemma~\ref{lemme:TSn}, $t_{\overline \lambda_{+}}$ and $t_{\overline \lambda_{-}}$ are $p$-regular elements.
    On the other hand, for any symmetric partition $\mu\neq\lambda$,
$\rho_{\mu}^{+}(t_{\overline \lambda_{\pm}}) = \rho_{\mu}^{-}(t_{\overline
\lambda_{\pm}})$. Now, using that $\rho_{ \lambda}^{\pm}\notin b$
and evaluating equality~(\ref{eq:basican}) on $t_{\overline
\lambda_{\pm}}$, we obtain
$$
\widehat{\rho}_{\lambda}^+(t_{\overline{\lambda}_{+}}) = \sum_{j=1}^r
a_j\widehat{\psi}_j(t_{\overline \lambda_+}) = 
\sum_{j=1}^r a_j\widehat{\psi}_j(t_{\overline \lambda_-}) = 
\widehat{\rho}_{\lambda}^+(t_{\overline{\lambda}_{-}}),$$
which is a contradiction. Hence, either $\rho_{\lambda}^+$ or
$\rho_{\lambda}^-$ lies in $b$.

    Suppose now there is an $\varepsilon$-stable $p$-basic set $B$ of
$\mathfrak B_{\gamma}$ such that $\Res^{\sym_n}_{\alt_n}(B) = b$. Again by
Lemma~\ref{lemme:TSn}, the $\varepsilon$-stable characters of $B$ are in
$\mathcal T$, that is, are labeled by partitions of $\mathcal C_{\gamma}$.
But at least one character of $b$ is labeled by a partition of $\mathcal
C_{\gamma}$ and its induced character to $\sym_n$ is in $B$. The result
follows.
\end{proof}

\begin{Rem}
\label{rk:ensTbAn}
\begin{enumerate}[(i)]
\item Note that if $b$ is the restriction of an $\varepsilon$-stable $p$-basic
set of $\mathfrak B_{\gamma}$, then $b$ contains both $\rho_{\lambda}^+$
and $\rho_{\lambda}^-$ for all $\lambda\in \mathcal C_{\gamma}$. The
converse is not true, as we see in Remark~\ref{rk:defbasic} and in
Example~\ref{ex:passtable}.
\item Assume $\mathfrak b_{\gamma}$ has a unitriangular $p$-basic set.
Then for any $\widetilde{\varphi}\in\mathcal R_{\mathfrak b}$, there
exists a unique $\lambda\in\mathcal C_{\gamma}$ such that either
$E(\varphi) = \rho_{\lambda}^+$ or $E(\varphi) = \rho_{\lambda}^-$. In
particular, the set $\mathcal T_b$ of
Proposition~\ref{prop:unitriangulairerestriction} constructed
in~(\ref{eq:ensembleTb}) is labeled by $\mathcal C_{\gamma}$.
\end{enumerate}
\end{Rem}

\begin{exa}
    We return to Example~\ref{ex:S6A6}. We have $\gamma = \emptyset$,
$\mathfrak B_{0}$ has $3$-weight $2$, and the characters
$\chi_1,\ldots,\chi_5$ are labeled by the partitions $(6)$, $(1^6)$,
$(5,1)$, $(2,1^4)$ and $(3,2,1)$, respectively. The $3$-quotient of
$(3,2,1)$ is $((1),\emptyset,(1))$. In particular, $(3,2,1)$ lies in
$\mathcal T$, and is the only self-conjugate partition of $6$ of weight $2$ in $\mathcal
T$.
\end{exa}

\subsection{Counterexamples for alternating groups}
\label{subsec:contreexemples}

    Now, we will prove that, for $p = 3$, the alternating groups $\alt_{18}$
and $\alt_{19}$ have no unitriangular $3$-basic set.

\subsubsection{The case of $\alt_{18}$ and $p = 3$}\label{exA18}

    In this section, we consider the principal block $\mathfrak
b_{0}$ of $\alt_{18}$ for $p = 3$. Assume $\mathfrak b_{0}$
has a unitriangular $3$-basic set $(b,\leq,\Psi)$. Let $\mathfrak
B_{0}$ be the principal block of $\sym_{18}$. Note that this is
the unique $3$-block of $\sym_{18}$ that covers $\mathfrak b_{0}$.
Furthermore, $\mathfrak B_{0}$ has exactly three PIMs
$\Phi_{\mu^1}$, $\Phi_{\mu^2}$ and $\Phi_{\mu^3}$ fixed under
$\operatorname{sgn}$, where $\mu^1$, $\mu^2$ and $\mu^3$ are the
$3$-regular partitions of $\mathcal M_{18}^3$ given by
$$\mu^1 = (10,4^2),\quad \mu^2 = (9,4^2,1)\quad\text{and}\quad
\mu^3 = (7,5,2^2,1^2).$$
    Since $\mathfrak b_{0}$ has a unitriangular $p$-basic set, by
Lemma~\ref{lemme:CSn} and Remark~\ref{rk:ensTbAn}, the set $\mathcal T_b$
defined in~(\ref{eq:ensembleTb}) is the set of characters of $\sym_{18}$
labeled by $\mathcal C_{0} =
\{\lambda^1,\,\lambda^2,\,\lambda^3\}$, where
$$
\lambda^1 = (9,2,1^7),\quad \lambda^2 = (7,4,2^2,1^3)\quad\text{and}\quad
\lambda^3 = (6,5,2^3,1).
$$
Note that $(\lambda^1)^{(3)} = ((1^3),\emptyset,(3))$,
$(\lambda^2)^{(3)} = ((3),\emptyset,(1^3))$ and
$(\lambda^3)^{(3)} = ((2,1),\emptyset,(2,1))$.

    Now, using Chevie~\cite{GAP4}, we obtain that the part of the
decomposition matrix of $\mathfrak B_{0}$ corresponding to these
characters is

{\tikzstyle{every picture}+=[remember picture]
\renewcommand{\arraystretch}{1.2}
\renewcommand{\arraycolsep}{2pt}
\begin{equation}
\label{eq:matrice18}
D=\begin{array}{c|p{1pt}cccp{1pt}|}
\cline{2-2}
\cline{6-6}
&&1&1&0&\\
&&2&1&1&\\
&&2&0&1&\\
\cline{2-2}
\cline{6-6}
\end{array} \; ,
\end{equation}
}

\noindent where the lines are labeled (from top to bottom) by $\lambda^1$,
$\lambda^2$ and $\lambda^3$ and the columns (from left to right) by
$\mu^1$, $\mu^2$ and $\mu^3$.
    Note that $D$ is not unitriangularisable (because $D$ has only two $0$
entries). Thus, by Proposition~\ref{prop:unitriangulairerestriction},
$\mathfrak b_{0}$ has no unitriangular $3$-basic set.

\subsubsection{The case of $\alt_{19}$ and $p = 3$}\label{sfock}

    In this section, we show that, in the case of the alternating group
$\alt_{19}$, for $p = 3$, there cannot exist a unitriangular $p$-basic set. 

    To see that, we need the theory of Fock spaces. We briefly recall how
this theory appears in our context and we refer to \cite[Ch. 6]{GJ} for
details. Let $v$ be an indeterminate and define $\mathcal{F}_v$ to be the
$\mathbb{C} (v)$ vector space with basis given by all integer partitions:
$$\mathcal{F}_v :=\bigoplus_{n\in \mathbb{Z}_{\geq
0}}\bigoplus_{\lambda\in \Pi^1(n)} \mathbb{C} (v) \lambda.$$
The module
$\mathcal{F}_v$ is in fact an integrable module over the affine quantum
group of affine type $A$. The action of the divided powers of the Chevalley
operators $E_i$ and $F_i$ can be explicitly given via a combinatorial
formula which we do not recall here. We get the following well-known
proposition whose proof can be found for example in \cite[Prop 2.3]{LR}.
\begin{Prop}\label{fock}
Assume that there exist $k\in \mathbb{N}$,  $(i_1,\ldots,i_k)\in
(\mathbb{Z}/p\mathbb{Z})^k$ and $(a_1,\ldots,a_k)\in \mathbb{N}^k$ such
that  
  $$F_{i_1}^{(a_1)}\ldots F_{i_k}^{(a_k)}.\emptyset = \lambda + \sum_{\mu
    \neq \lambda} a_{\lambda,\mu}(v) \mu$$
with $a_{\mu,\lambda} \in v\mathbb{N}{v}$ for all $\mu \neq \lambda$, then
$\lambda$ is $p$-regular and we have, for all $\lambda\neq \mu$, that
$a_{\mu,\lambda} (1)= d_{\lambda,\mu}$. 
\end{Prop}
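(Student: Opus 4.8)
This is a classical fact; the proof I would give recognises the vector
$w:=F_{i_1}^{(a_1)}\cdots F_{i_k}^{(a_k)}.\emptyset$ as an element of the
Kashiwara--Lusztig canonical (lower global crystal) basis of the Fock space and then
appeals to Ariki's theorem. Let $M\subseteq\mathcal F_v$ be the submodule generated by
$\emptyset$, i.e.\ $M=U_v^-\!\cdot\emptyset$ where $U_v^-$ is the subalgebra generated by
the operators $F_i$; this is the irreducible highest-weight module with highest-weight
vector $\emptyset$. Recall that $\mathcal F_v$ carries a bar involution $x\mapsto\overline x$,
semilinear over $\mathbb C(v)$ with $\overline v=v^{-1}$, fixing $\emptyset$ and commuting
with every divided power $F_i^{(a)}$ (because the bar involution of the quantum group fixes
the $F_i^{(a)}$); hence $w$ is bar-invariant. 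Moreover, by hypothesis
$w=\lambda+\sum_{\mu\ne\lambda}a_{\mu,\lambda}(v)\,\mu$ with $a_{\mu,\lambda}(v)\in v\mathbb N[v]\subseteq v\mathbb Z[v]$,
so $w$ lies in the standard lattice $\mathcal L:=\bigoplus_\nu\mathbb Z[v]\,\nu$ and
$w\equiv\lambda\pmod{v\mathcal L}$.

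The canonical basis $\{G(\mu)\mid\mu\ \text{$p$-regular}\}$ of $M$ is unitriangular with
respect to the standard basis and the dominance order, with off-diagonal entries in
$v\mathbb Z[v]$; hence it is a $\mathbb Z[v]$-basis of $M\cap\mathcal L$, and modulo
$v\mathcal L$ it reduces to the set of classes of the $p$-regular partitions (the
Misra--Miwa realisation of the crystal of $M$). Since $w\in M\cap\mathcal L$ and
$w\equiv\lambda\pmod{v\mathcal L}$, the class of $\lambda$ lies in the $\mathbb Z$-span of
the classes of the $p$-regular partitions in $\mathcal L/v\mathcal L$; as the classes of
all partitions are $\mathbb Z$-independent there, $\lambda$ must be $p$-regular. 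This
gives the first assertion.

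Now $G(\lambda)$ is defined, and is the unique bar-invariant element of $M$ congruent to
$\lambda$ modulo $v\mathcal L$: if $y\in M\cap\mathcal L$ is bar-invariant with
$y\equiv0\pmod{v\mathcal L}$, writing $y=\sum_\mu c_\mu(v)G(\mu)$ one sees from the
triangularity that each $c_\mu\in v\mathbb Z[v]$, and then bar-invariance forces
$c_\mu(v^{-1})=c_\mu(v)$, whence $c_\mu=0$. Applying this to $y=w-G(\lambda)$ gives
$w=G(\lambda)$, so the coefficients $a_{\mu,\lambda}(v)$ are precisely the transition
coefficients from the standard to the canonical basis of $M$, i.e.\ the $v$-decomposition
numbers. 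By Ariki's theorem (see e.g.\ \cite[Ch.~6]{GJ}), specialising these at $v=1$
recovers the ordinary decomposition numbers --- those of the corresponding Hecke algebra at
a primitive $p$-th root of unity, which in the situations to which we apply this coincide
with the $p$-modular ones of $\mathfrak S_n$; thus, setting $v=1$ in the displayed
identity, the coefficient of $\mu$ is the decomposition number $d_{\lambda,\mu}$, as
claimed. This is exactly the argument of \cite[Prop.~2.3]{LR}.

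The genuinely hard ingredients are imported wholesale: the existence, uniqueness and
triangularity of the canonical basis of $M$, the Misra--Miwa description of its crystal
modulo $v$, and above all Ariki's theorem (together with the positivity/triangularity of
the canonical basis that lies behind it). Once these are granted, the proof is merely the
uniqueness characterisation of canonical-basis elements, and the only thing one checks by
hand is that $w$ is bar-invariant and integral with $w\equiv\lambda\pmod{v\mathcal L}$ ---
which is immediate from the hypothesis and the two recalled properties of the bar
involution. So the ``main obstacle'' is entirely external; there is no real computation to
do once Ariki's theorem is available.
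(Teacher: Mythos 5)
Your canonical-basis argument is correct and is exactly the argument of the source the paper itself points to for this statement (\cite[Prop.~2.3]{LR}): the vector $w=F_{i_1}^{(a_1)}\cdots F_{i_k}^{(a_k)}.\emptyset$ is bar-invariant, lies in the standard lattice and is congruent to $\lambda$ modulo $v\mathcal L$, hence $\lambda$ is $p$-regular and $w=G(\lambda)$ by the uniqueness characterisation of canonical basis elements; the paper offers no independent proof, so up to this point you and the paper are doing the same thing.

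There is, however, one genuine gap, at the final identification. Ariki's theorem identifies $a_{\mu,\lambda}(1)$ with the decomposition numbers of the Hecke algebra of $\sym_n$ \emph{over a field of characteristic zero} at a primitive $p$-th root of unity, not with the $p$-modular decomposition numbers of $\sym_n$, which is what $d_{\lambda,\mu}$ means in this paper and what the application to $\alt_{19}$ uses (in particular the vanishing $d_{\lambda,\mu^3}=0$). The two matrices differ in general by James's adjustment matrix $A$, which has non-negative integer entries, so Ariki only gives $a_{\mu,\lambda}(1)\leq d_{\lambda,\mu}$; equality of the two matrices is not known in this range (the relevant block has weight $6\geq p=3$) and fails in general, so your sentence ``which in the situations to which we apply this coincide with the $p$-modular ones of $\sym_n$'' is precisely the assertion that needs proof. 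The gap can be closed as follows: at $v=1$ the same product of $i$-induction functors, applied in characteristic $p$ to the trivial (projective) module of $\sym_0$, yields a projective $k\sym_n$-module whose ordinary character is the specialisation of the displayed identity, hence a non-negative combination $\sum_\nu c_\nu\,\Phi_{\varphi_\nu}$ of PIM characters of $k\sym_n$; writing each $\Phi_{\varphi_\nu}=\sum_\mu A_{\mu\nu}\,G(\mu)|_{v=1}$ with $A_{\mu\nu}\geq 0$, $A_{\nu\nu}=1$, and comparing with $w|_{v=1}=G(\lambda)|_{v=1}$, the linear independence of the $G(\mu)|_{v=1}$ gives $\delta_{\mu\lambda}=\sum_\nu A_{\mu\nu}c_\nu$, which forces $c_\nu=\delta_{\nu\lambda}$; hence $w|_{v=1}=\Phi_{\varphi_\lambda}$ and its coefficients are indeed the $p$-modular decomposition numbers, including the zeros. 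Without some such argument the proposal only proves the characteristic-zero (Hecke algebra) version of the proposition.
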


    Now, to each $p$-regular partition $\lambda$, by \cite[\S
3.5.11]{GJ}, one can attach a certain sequence of elements in
$\mathbb{Z}/p\mathbb{Z}$:
  $$\eta_p (\lambda):=\underbrace{i_1,\ldots,i_1}_{a_1\text{
     times}},\ldots,\underbrace{i_k,\ldots,i_k}_{a_k\text{ times}},$$
where $a_1+\dots +a_k = n$. This sequence is the analogue of the sequence
obtained using the {\it ladder method} (but this  is not identical.) From
this sequence, we define an element of the Fock space which is
the action of a certain divided power of the Chevalley operators $F_i$ on
the empty partition:
$$A(\lambda):=F_{i_1}^{(a_1)}\ldots F_{i_k}^{(a_k)}.\emptyset \in
\mathcal{F}_v.$$

    Let us now consider the case $n = 19$ and $p = 3$, and the $3$-regular
partition $\lambda=(10,4,4,1)$. First, one can check that $m(10,4,4,1) =
(10,4,4,1)$. Now, we have:
$$\eta_p (\lambda)=(0,2,1,1,0,2,2,1,0,0,0,1,2,2,0,1,1,2,0),$$
which allows the definition of
   $$A (10,4,4,1):=F_0 F_2 F_1^{(2)} F_0 F_2^{(2)} F_1 F_0^{(3)} F_1
     F_2^{(2)} F_0 F_1^{(2)} F_2 F_0 .\emptyset.$$
This element may be computed using the file ``arikidec'' in the directory
``contr''' in the package Chevie \cite{chevie} of gap3 (the function is
called ``aliste''). We get that: 
   
{\small{\small
$$\begin{array}{rcl} A(10,4,4,1)&=& (10,4,1,1)+ v (9, 5, 4, 1 )+  v ( 8, 5, 4, 1, 1 )+  v ( 10, 4, 3, 2 )+ v^2 ( 9, 5, 3, 2 )+ v ( 8, 5, 3, 3 )  +2v^2 ( 8, 5, 3, 2, 1 )\\
  & & +   2v^2 ( 8, 4, 3, 2, 2 )   +  (v^3 + v) ( 7, 5, 4, 3 )+ v^2 ( 7, 4, 4, 4 )+ (v^4 + 2v^2) ( 7, 5, 4, 2, 1 )+ (v^3 + v) ( 7, 4, 4, 2, 2 ) \\
  & & + v ( 7, 6, 5, 1 )+ v^2 ( 6, 6, 6, 1 )+ v^2 ( 7, 6, 4, 1, 1 )+ v^2 ( 7, 6, 3, 3 )+ 
  v^3 ( 7, 6, 3, 2, 1)+v^3 ( 6, 6, 3, 2, 2 )  \\
  & & + v^2 ( 7, 5, 5, 2 )+ v^3 ( 6, 5, 4, 4 )+(v^4 + 2v^2) ( 6, 5, 4, 2, 2 )+
   v ( 10, 3, 3, 2, 1 ) + v ( 9, 3, 3, 2, 2 )\\
   & &  +v^3 ( 8, 3, 3, 3, 2 )+ v^2 ( 10, 3, 3, 1, 1, 1 )+ 2v^2 ( 9, 3, 3, 1, 1, 1, 1 )+
   v^3 ( 8, 5, 3, 1, 1, 1 ) + 2v^3 ( 8, 4, 3, 1, 1, 1, 1 )\\
   & &+(v^4 + v^2) ( 7, 5, 3, 2, 1, 1 )+2v^2 ( 7, 4, 3, 2, 2, 1 )
   3v^3 ( 7, 4, 3, 2, 1, 1, 1 )+ 2v^3 ( 7, 3, 3, 3, 2, 1 )+  \\
   & & v^4 ( 7, 3, 3, 3, 1, 1, 1 ) + v^3 ( 5, 5, 5, 2, 2 )+ v^3 ( 5, 5, 4, 4, 1 )+
  (v^4 + v^2) ( 5, 5, 4, 3, 2 )+ v^3 ( 7, 5, 4, 1, 1, 1 )+\\
  & & 2v^3 ( 6, 5, 4, 1, 1, 1, 1 )+
  v^4 ( 5, 5, 5, 1, 1, 1, 1 ) 2v^3, ( 6, 5, 3, 2, 2, 1 )+ 2v^4, ( 6, 5, 3, 2, 1, 1, 1 )+
   \\
  & & ( 2v^4 + v^2) ( 5, 5, 3, 3, 2, 1 ) + (v^5 + v^3) ( 5, 5, 3, 3, 1, 1, 1 )+ v^3 ( 5, 4, 4, 4, 2 )+
  ( v^4 + v^2) ( 6, 4, 3, 2, 2, 1, 1 )+
\\
   & &    (2v^4 + v^2) ( 5, 4, 3, 3, 2, 1, 1 ) + v^3, ( 5, 5, 3, 2, 2, 2 )+
    v^3 ( 5, 4, 3, 2, 2, 2, 1 )+
   v^3 ( 4, 4, 4, 4, 2, 1 )+

\\
   & &    ( v^5 + v^3) ( 4, 4, 4, 3, 2, 1, 1 )+  v^4 ( 4, 4, 4, 2, 2, 2, 1 )+  v^2 ( 7, 3, 3, 3, 3 )+
   v^3 ( 5, 5, 3, 3, 3 )+
   v^3 ( 6, 3, 3, 3, 2, 1, 1 )+

\\
   &&      v^4 ( 5, 3, 3, 3, 2, 2, 1 )+ v^4 ( 4, 4, 3, 3, 3, 1, 1 )+   v^4 ( 4, 3, 3, 3, 3, 3 )+
   v^5 ( 4, 3, 3, 3, 3, 2, 1 )+
   v ( 10, 3, 2, 1, 1, 1, 1 )+
 \\ 
 &&     v^2 ( 8, 5, 2, 1, 1, 1, 1 )+ (v^4 + v^2) ( 8, 3, 3, 1, 1, 1, 1, 1 )+ v^2 ( 10, 2, 2, 2, 1, 1, 1 )+
   v^3 ( 9, 2, 2, 2, 2, 1, 1 ),

\\
    &&    
       v^3 ( 8, 2, 2, 2, 2, 1, 1, 1 )+
   v^2 ( 7, 5, 2, 2, 1, 1, 1 )+ v^4 ( 7, 4, 2, 2, 2, 1, 1 )+
   2v^3 ( 7, 3, 3, 2, 1, 1, 1, 1 )+ 
  \\
    && 
       v^4 ( 7, 3, 2, 2, 2, 1, 1, 1 )+
   v^2 ( 10, 3, 1, 1, 1, 1, 1, 1 )+
       v^3 ( 8, 5, 1, 1, 1, 1, 1, 1 )+
   v^3 ( 10, 2, 1, 1, 1, 1, 1, 1, 1 )+
     \\
    && 
     v^3 ( 9, 2, 1, 1, 1, 1, 1, 1, 1, 1 )+
   v^4 ( 8, 2, 2, 1, 1, 1, 1, 1, 1, 1 )+
   v^3 ( 7, 5, 1, 1, 1, 1, 1, 1, 1 )+
   v^4 ( 7, 4, 1, 1, 1, 1, 1, 1, 1, 1 )+
     \\
    && 
   2v^4 ( 7, 3, 3, 1, 1, 1, 1, 1, 1 )+
   v^5 ( 7, 3, 2, 1, 1, 1, 1, 1, 1, 1 )+
   v^2 ( 7, 4, 4, 1, 1, 1, 1 )+
   v^3 ( 5, 5, 4, 1, 1, 1, 1, 1 )+ \\
   &&2v^4 ( 5, 5, 3, 2, 1, 1, 1, 1 )+
   2v^4 ( 5, 4, 3, 2, 2, 1, 1, 1 )+
   v^4 ( 4, 4, 4, 4, 1, 1, 1 )+
   v^5 ( 4, 4, 4, 2, 2, 1, 1, 1 )+\\
  &&
     v^3 ( 6, 3, 3, 2, 2, 1, 1, 1 )+
  v^5 ( 5, 3, 3, 3, 2, 1, 1, 1 )+ 
    v^4 ( 6, 3, 3, 1, 1, 1, 1, 1, 1, 1 )+
   v^5 ( 5, 5, 3, 1, 1, 1, 1, 1, 1 )+
      \\
   && 
   v^5 ( 5, 4, 3, 1, 1, 1, 1, 1, 1, 1 )+
   v^4 ( 4, 4, 3, 2, 2, 1, 1, 1, 1 )+
   v^5 ( 4, 4, 3, 2, 1, 1, 1, 1, 1, 1 )+
   v^5 ( 4, 3, 3, 3, 2, 1, 1, 1, 1 )+ \\
   &&
  v^6 ( 4, 3, 3, 3, 1, 1, 1, 1, 1, 1 )
  \end{array}$$}}

    One can see that it satisfies the hypotheses of Proposition
\ref{fock}.  Now,  for $n=19$, we have three partitions in  $\mathcal
C_{(1)} $ which are $\mu^1:= (6,5,3,2,2,1)$, $\mu^2 = (7,4,3,2,1,1,1)$ and
$\mu^3 = (10,1,1,1,1,1,1,1,1)$.  We thus have $d_{\lambda, \mu^1} = 2$,
$d_{\lambda, \mu^2} = 3$ and  $d_{\lambda, \mu^3} = 0$. Thus, by
Proposition \ref{prop:unitriangulairerestriction}, there is no
unitriangular $p$-basic set for $\alt_{19}$ and $p = 3$. 

\section{Unitriangular basic sets for symmetric algebras}
\label{sec:symalgebras}

    This section concerns the existence and the construction of unitriangular
basic sets in the context of symmetric algebras (see the definition in
\cite{GP} - this thus covers the particular case of finite groups).
Given a unitriangular basic set and an involution on the algebra, we show
how one can construct a new unitriangular basic set which enjoys a nice
``stability'' property with respect to the involution.

\subsection{Setting}

    In this section and the following one, we will make two hypotheses.

\begin{Hyp}\label{hy}
Let $A$ be an integral domain, $L$ be a field, and let
$\theta:A\rightarrow L$ be a ring homomorphism. Let $\mathcal H$ be an
associative 
symmetric $A$-algebra.
\begin{enumerate}
\item We assume that $(\BB,\leq,\Psi)$ is a unitriangular basic
set for $( \mathcal{H},\theta)$ with respect to the bijective map $\Psi: \BB
\to \operatorname{Irr} (L \mathcal{H})$.
\item We assume that we have an algebra automorphism $\phi \in
\operatorname{Aut}( \mathcal{H})$ such that $\phi^2=\text{Id}_ \mathcal{H}$. 
\end{enumerate}
  \end{Hyp}
\medskip

\noindent From this, we will show how one can construct another
unitriangular basic set, different from $\BB$ in general, and which enjoys
nice properties with respect to the automorphism $\phi$. To do this, we
first remark the following.
 \begin{itemize}
  \item If $N$ is a simple $K \mathcal{H}$-module (resp. $L
  \mathcal{H}$-module), then we can twist this module by $\phi$ to obtain a
  new simple $KH$-module (resp. $L \mathcal{H}$-module) $N^{\phi}$. For
  $\lambda \in \Lambda$, we denote by $\lambda' $ the element of $\Lambda$ such that
  $(V^{\lambda})^{\phi} \simeq V^{ \lambda '}$.  For $\lambda \in \BB$, we
  denote by $m (\lambda)$ the element of $\BB $ such that $\Psi (\lambda)^{\phi}\simeq
  \Psi (m (\lambda))$.
  \item By the definition of decomposition maps, there is a compatibility
  with the action by automorphisms on modules, and we thus have, for all
  $\lambda \in \Lambda$ and $\mu \in \BB$: $$d_{\lambda, \Psi
  (\mu)}=d_{\lambda', \Psi (m(\mu))}.$$
 \end{itemize}

    We will need the following two elementary results coming from the
above hypotheses (the first one being an analogue of \cite[Prop 4.2]{BOX}
in our context).

  \begin{lemma}\label{box}
    For all $\lambda \in \BB$, we have $m (\lambda) ' \leq
\lambda $ and $\lambda' \leq m (\lambda) $.
  \end{lemma}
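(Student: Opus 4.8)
The statement to prove is Lemma~\ref{box}: for all $\lambda\in\BB$, one has $m(\lambda)'\leq\lambda$ and $\lambda'\leq m(\lambda)$. The two inequalities are essentially equivalent by symmetry (apply $\phi$ and swap roles), so I would prove the first one, $m(\lambda)'\leq\lambda$, and then note the second follows by the same argument with $\lambda$ replaced by $m(\lambda)$, since $m$ is an involution on $\BB$ (because $\phi^2=\mathrm{Id}$ forces $\Psi(m(m(\lambda)))\simeq\Psi(m(\lambda))^\phi\simeq(\Psi(\lambda)^\phi)^\phi\simeq\Psi(\lambda)$, hence $m(m(\lambda))=\lambda$) and similarly $(\lambda')'=\lambda$.

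The key idea is to play off the two unitriangularity conditions against the compatibility relation $d_{\lambda,\Psi(\mu)}=d_{\lambda',\Psi(m(\mu))}$. Start from Condition (1) of the basic set applied to $m(\lambda)\in\BB$: $d_{m(\lambda),\Psi(m(\lambda))}=1$. Now twist: by the compatibility relation (used with the substitution that turns $\Psi(m(\lambda))$ into $\Psi(\mu)$ with $\mu=\lambda$, i.e. reading the relation in the direction $d_{\mu_0,\Psi(m(\mu_1))}=d_{\mu_0',\Psi(m(m(\mu_1)))}=d_{\mu_0',\Psi(\mu_1)}$), we get $d_{m(\lambda)',\Psi(\lambda)}=d_{m(\lambda),\Psi(m(\lambda))}=1$. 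In particular $d_{m(\lambda)',\Psi(\lambda)}\neq 0$. Then Condition (2) of the basic set, applied with $M=\Psi(\lambda)$ (so $\Psi^{-1}(M)=\lambda$) and with the row index $m(\lambda)'\in\Lambda$, says precisely that $d_{m(\lambda)',\Psi(\lambda)}=0$ unless $m(\lambda)'\leq\lambda$. Since the decomposition number is nonzero, we conclude $m(\lambda)'\leq\lambda$, as desired.

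For the second inequality, apply what we just proved to the element $m(\lambda)$ in place of $\lambda$: this gives $m(m(\lambda))'\leq m(\lambda)$, i.e. $\lambda'\leq m(\lambda)$ using $m(m(\lambda))=\lambda$. That completes the proof. The only genuinely delicate point is getting the bookkeeping of the compatibility relation $d_{\lambda,\Psi(\mu)}=d_{\lambda',\Psi(m(\mu))}$ right — one must apply it with the correct choice of free variable and remember that $m$ and $'$ are involutions — but this is routine once one writes it out carefully; there is no real obstacle beyond this index-chasing, and no additional structural input about $\mathcal H$ is needed.
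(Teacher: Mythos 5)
Your proof is correct and follows essentially the same route as the paper: apply condition (1) to $m(\lambda)\in\BB$, use the compatibility relation $d_{\lambda,\Psi(\mu)}=d_{\lambda',\Psi(m(\mu))}$ together with $m^2=\mathrm{id}$ to get $d_{m(\lambda)',\Psi(\lambda)}=1$, conclude $m(\lambda)'\leq\lambda$ by condition (2), and obtain the second inequality by substituting $m(\lambda)$ for $\lambda$. Your explicit verification that $m$ is an involution is a point the paper simply asserts, but the argument is the same.
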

  \begin{proof}
  For $\lambda \in \BB$, we have that    $d_{m  (\lambda),
\Psi (m (\mu))}=1$ and, by the above property:
   $$d_{m  (\lambda), \Psi (m (\lambda))}=d_{m   (\lambda)',
\Psi (\lambda)}.$$
By the definition of unitriangular basic set, this means that $ m
(\lambda)' \leq \lambda $. Now, to conclude, note that $ m
(\lambda)$ is in $\BB$ and that $ m^2 $ is the identity. 
  \end{proof}
  
The following result follows from a direct application of Lemma \ref{box}. 
  \begin{lemma}\label{box2}
   Let $\lambda \in \BB$. 
  \begin{enumerate}\label{mullnou}
    \item If $\lambda=m (\lambda)$, then  we have $\lambda\geq \lambda'$.
    \item If $\lambda=\lambda'$, then  we have $m (\lambda)\geq \lambda$.
\end{enumerate}
  \end{lemma}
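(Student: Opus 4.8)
The plan is to obtain both statements as immediate specialisations of Lemma~\ref{box}, which provides, for every $\lambda\in\BB$, the two inequalities $m(\lambda)'\leq\lambda$ and $\lambda'\leq m(\lambda)$. Since $m$ is an involution on $\BB$ (as $m^2$ is the identity) and conjugation $\lambda\mapsto\lambda'$ is an involution on the indexing set, substituting a fixed-point hypothesis into one of these inequalities will collapse one of its sides and leave exactly the desired conclusion.

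For part (1), I would take the hypothesis $\lambda=m(\lambda)$ and feed it into the first inequality of Lemma~\ref{box}: the term $m(\lambda)'$ becomes $\lambda'$, so $m(\lambda)'\leq\lambda$ reads $\lambda'\leq\lambda$, that is, $\lambda\geq\lambda'$. For part (2), I would instead use the hypothesis $\lambda=\lambda'$ in the second inequality $\lambda'\leq m(\lambda)$ of Lemma~\ref{box}: the left-hand side becomes $\lambda$, yielding $\lambda\leq m(\lambda)$, i.e. $m(\lambda)\geq\lambda$.

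There is no real obstacle here; the only point requiring a moment of attention is to select the correct one of the two inequalities furnished by Lemma~\ref{box} in each case (the inequality involving $m(\lambda)'$ when $\lambda$ is $m$-fixed, and the inequality involving $\lambda'$ when $\lambda$ is conjugation-fixed), and to keep in mind that both $m$ and conjugation are involutions so that the substitution genuinely simplifies one side. The whole argument is thus a two-line consequence of the preceding lemma.
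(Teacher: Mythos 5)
Your argument is correct and matches the paper, which states that the lemma follows from a direct application of Lemma~\ref{box}: substituting $\lambda=m(\lambda)$ into $m(\lambda)'\leq\lambda$ gives part (1), and substituting $\lambda=\lambda'$ into $\lambda'\leq m(\lambda)$ gives part (2), exactly as you do.
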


\subsection{Constructing new unitriangular basic sets}
\label{subsec:newbasic}
  
    We keep the notation of the previous section. We first define a new total
order on the set $\Lambda$ by setting $\lambda \preceq \mu $ if:
\begin{itemize}\label{newor}
\item $\lambda=\mu$, 
\item or  $\lambda=\mu'$ and $\mu \leq \mu'$, 
\item or if 
  $$\operatorname{Max}_{\leq}(\lambda,
    \lambda')<\operatorname{Max}_{\leq}(\mu, \mu').$$
\end{itemize}
We now define a map:
$$\begin{array}{cccc}
\Theta:& \BB& \to& \Lambda\\
& \lambda & \mapsto & \operatorname{Max}_{\preceq } \{ \lambda,  m (\lambda)'\}.
 \end{array}
 $$
In other words, by Lemma \ref{box}, we have:
 $$\Theta (\lambda)=\left\{ \begin{array}{lr} 
 \lambda           & \text{ if } m (\lambda) \leq  \lambda,\\
 m (\lambda)' & \text{ if } m (\lambda) > \lambda.\\
 \end{array}\right.$$
 
\begin{Prop}
The map $\Theta$ is injective. 
\end{Prop}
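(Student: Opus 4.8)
The plan is to show that $\Theta$ is injective by analyzing the two possible values $\Theta$ can take and using Lemmas~\ref{box} and~\ref{box2} to rule out collisions. Suppose $\Theta(\lambda)=\Theta(\mu)$ for $\lambda,\mu\in\BB$; I want to conclude $\lambda=\mu$. By the explicit description of $\Theta$, for each element of $\BB$ we are in one of two cases: either $\Theta(\lambda)=\lambda$ (when $m(\lambda)\le\lambda$) or $\Theta(\lambda)=m(\lambda)'$ (when $m(\lambda)>\lambda$). So there are three cases to treat: both $\lambda$ and $\mu$ are of the first type; both are of the second type; and one of each (which splits into two symmetric subcases but is really one argument).

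First I would handle the case where $\Theta(\lambda)=\lambda$ and $\Theta(\mu)=\mu$: then $\lambda=\mu$ immediately. Next, the case $\Theta(\lambda)=m(\lambda)'$ and $\Theta(\mu)=m(\mu)'$: here $m(\lambda)'=m(\mu)'$. Since conjugation $\nu\mapsto\nu'$ (coming from the involutive automorphism $\phi$ on modules) is a bijection of $\Lambda$ — indeed $(V^\lambda)^\phi\simeq V^{\lambda'}$ and $\phi^2=\mathrm{Id}$ forces $\lambda''=\lambda$ — we get $m(\lambda)=m(\mu)$, and then applying $m$ (which is also an involution on $\BB$, since it is induced by $\phi$ acting on $\Irr(L\mathcal H)$ and $\phi^2=\mathrm{Id}$) gives $\lambda=\mu$. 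The only substantive case is the mixed one: say $\Theta(\lambda)=\lambda$ with $m(\lambda)\le\lambda$, and $\Theta(\mu)=m(\mu)'$ with $m(\mu)>\mu$, and $\lambda=m(\mu)'$. I would then use Lemma~\ref{box2}: from $m(\mu)>\mu$ we are \emph{not} in the case $\mu=m(\mu)$, but I need to extract a contradiction from $\lambda=m(\mu)'$ together with $m(\lambda)\le\lambda$. Substituting, $m(\lambda)=m(m(\mu)')$. Here is where I would want the compatibility relation $d_{\nu,\Psi(\rho)}=d_{\nu',\Psi(m(\rho))}$ and Lemma~\ref{box}: applying Lemma~\ref{box} to $\mu\in\BB$ gives $m(\mu)'\le\mu$ and $\mu'\le m(\mu)$; combined with $m(\mu)>\mu$ these should pin down the order relations tightly enough. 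In particular $\lambda=m(\mu)'\le\mu$, and then $m(\mu)>\mu\ge\lambda=m(\mu)'$, so $m(\mu)>m(\mu)'$; applying $'$ and using that $m(\mu)'\ge(m(\mu))'{}'=m(\mu)$ would be false unless... — I expect the clean contradiction to come by comparing $m(\lambda)\le\lambda=m(\mu)'$ with what Lemma~\ref{box} forces for $\lambda$, namely $m(\lambda)'\le\lambda$ and $\lambda'\le m(\lambda)$, yielding $m(\mu)=\lambda'\le m(\lambda)\le\lambda=m(\mu)'$, hence $m(\mu)\le m(\mu)'$, contradicting $m(\mu)>\mu\ge m(\mu)'$ once one checks $\mu\ge m(\mu)'$ (which is exactly Lemma~\ref{box} for $\mu$).

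The main obstacle I anticipate is bookkeeping the interaction of the two involutions $\lambda\mapsto\lambda'$ and $\lambda\mapsto m(\lambda)$ together with the single total order $\le$ — in particular being careful that $m$ restricted to $\BB$ is a genuine involution and that $'$ restricts appropriately, and then chaining the inequalities from Lemma~\ref{box} without sign errors. I would organize the mixed case as: from $\Theta(\lambda)=\lambda$ and $\Theta(\mu)=m(\mu)'=\lambda$, deduce $m(\mu)=\lambda'$; Lemma~\ref{box} applied to $\mu$ gives $m(\mu)'\le\mu$, i.e. $\lambda\le\mu$; Lemma~\ref{box} applied to $\lambda$ gives $\lambda'\le m(\lambda)$ and $m(\lambda)\le\lambda$ (this last from $\Theta(\lambda)=\lambda$, i.e. $m(\lambda)\le\lambda$), so $\lambda'\le\lambda$; but $\lambda'=m(\mu)'{}'=m(\mu)>\mu\ge\lambda$ (using $\mu\ge\lambda$ just shown and $m(\mu)>\mu$), contradicting $\lambda'\le\lambda$. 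That closes all cases, so $\Theta$ is injective. I would double-check each inequality direction against the definition of unitriangular basic set before committing the final write-up.
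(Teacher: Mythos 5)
Your proposal is correct and follows essentially the same route as the paper: the same three-way case split, with the mixed case ruled out by chaining the inequalities of Lemma~\ref{box} for both $\lambda$ and $\mu$ with the case assumptions $m(\lambda)\leq\lambda$ and $m(\mu)>\mu$ (your chain $\lambda'\leq m(\lambda)\leq\lambda$ versus $\lambda'=m(\mu)>\mu\geq\lambda$ is the paper's contradiction $m(\mu)\leq m(m(\mu)')\leq m(\mu)'$ versus $m(\mu)>m(\mu)'$, rewritten in terms of $\lambda=m(\mu)'$). The only cosmetic difference is that you spell out why the two ``same-type'' cases give $\lambda=\mu$ via the involutivity of $m$ and of conjugation, which the paper leaves implicit.
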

\begin{proof}
    Assume that $(\lambda,\mu)\in \BB^2$ satisfies $\Theta (\lambda) =
\Theta (\mu)$.  If $\Theta (\lambda) = \lambda$ and $\Theta (\mu) = \mu$,
or if $\Theta (\lambda) = m (\lambda)'$ and $\Theta (\mu) = m
(\mu)'$, we directly obtain that $\lambda = \mu$, so we may assume that we
have $\Theta (\mu) = m (\mu)'$  so that $\mu < m(\mu)$, and
$\Theta (\lambda) = \lambda$ so that $\lambda \geq m (\lambda)$. We
thus have $m (\mu)' = \lambda$. We show that this case is not
possible. 

    Indeed, by Lemma \ref{box}, we have $\mu \geq m(\mu)'$ so $m
(\mu) > m (\mu)'$. On the other hand, as  $\lambda \geq  m
(\lambda)$, we have $m (\mu)'\geq   m (m (\mu)') $. Now, by
Lemma \ref{box}, since $\lambda=m (\mu)'$ is in $\BB$, we have  $
m (m (\mu)') \geq m (\mu)$ and thus $m (\mu)\leq
m (\mu)'$, which is a contradiction.  We must thus have $\lambda=\mu$,
and $\Theta$ is injective.  
\end{proof}
From this, we can prove the main result of this section.

\begin{Th}
\label{main} 
Under the Hypothesis~\ref{hy},   set $\widetilde{\BB}:=\Theta (\BB)$ and 
  $$\widetilde{\Psi}: \widetilde{\BB} \to \operatorname{Irr} (L\mathcal{H})$$
such that, for all $\lambda \in \widetilde{\BB}$, we have $\widetilde{\Psi}(\lambda)
= \Psi (\Theta^{-1} (\lambda))$. Then $(\widetilde{\BB}, \preceq, \widetilde{\Psi})$ is
a unitriangular basic set for $(\mathcal{H},\theta)$. 
\end{Th}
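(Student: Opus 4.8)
The plan is to show the two defining conditions of a unitriangular basic set for the triplet $(\widetilde{\BB},\preceq,\widetilde{\Psi})$. First I would record the basic combinatorial facts about $\Theta$ and the order $\preceq$ that will be used repeatedly: by construction $\Theta(\lambda)\in\{\lambda,m(\lambda)'\}$, and in either case $\Theta(\lambda)\leq\operatorname{Max}_\leq(\lambda,m(\lambda)')$, so $\Theta(\lambda)\preceq\lambda$ in the new order (using that if $\Theta(\lambda)=m(\lambda)'\neq\lambda$ then $m(\lambda)'<\lambda$ by Lemma~\ref{box}, which is exactly the second clause in the definition of $\preceq$ applied with $\mu=\lambda$). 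Symmetrically, I would note $\lambda\preceq\Theta(\lambda)$ fails in general but the relevant inequality for condition~(1) is an equality on decomposition numbers, so I only need $\Theta$ injective (already proven) to make $\widetilde\Psi$ well-defined and bijective onto $\Irr(L\mathcal H)$.

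For condition~(1), I would compute $d_{\Theta(\lambda),\widetilde\Psi(\Theta(\lambda))}=d_{\Theta(\lambda),\Psi(\lambda)}$. If $\Theta(\lambda)=\lambda$ this is $d_{\lambda,\Psi(\lambda)}=1$ by hypothesis on $\BB$. If $\Theta(\lambda)=m(\lambda)'$, then by the automorphism-compatibility bullet, $d_{m(\lambda)',\Psi(\lambda)}=d_{m(\lambda),\Psi(m(\lambda))}=1$, again by hypothesis. So the diagonal entries are $1$.

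For condition~(2), I must show that for $\mu\in\BB$ and any $\nu\in\Lambda$, $d_{\nu,\widetilde\Psi(\Theta(\mu))}=d_{\nu,\Psi(\mu)}=0$ unless $\nu\preceq\Theta(\mu)$. Using the old unitriangularity, $d_{\nu,\Psi(\mu)}=0$ unless $\nu\leq\mu$. Using automorphism-compatibility, $d_{\nu,\Psi(\mu)}=d_{\nu',\Psi(m(\mu))}$, which by old unitriangularity is $0$ unless $\nu'\leq m(\mu)$, i.e.\ unless $\nu\leq m(\mu)'$. Hence whenever $d_{\nu,\Psi(\mu)}\neq 0$ we have both $\nu\leq\mu$ and $\nu\leq m(\mu)'$, so $\operatorname{Max}_\leq(\nu,\nu')\leq\operatorname{Max}_\leq(\mu,m(\mu)')$. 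If this is a strict inequality of maxima, the third clause of the definition of $\preceq$ gives $\nu\preceq\Theta(\mu)$ since $\operatorname{Max}_\leq(\nu,\nu')<\operatorname{Max}_\leq(\Theta(\mu),\Theta(\mu)')=\operatorname{Max}_\leq(\mu,m(\mu)')$ (the last equality because $\{\mu,m(\mu)'\}$ and $\{\Theta(\mu),\Theta(\mu)'\}$ have the same $\leq$-maximum by Lemma~\ref{box2} and the definition of $\Theta$). If instead $\operatorname{Max}_\leq(\nu,\nu')=\operatorname{Max}_\leq(\mu,m(\mu)')$, the main work is to check that $\nu\in\{\Theta(\mu),\Theta(\mu)'\}$, or else handle it via the first two clauses; here I would argue that $\nu\leq\mu$ and $\nu\leq m(\mu)'$ together with the maximum being attained forces $\nu$ to equal one of $\mu$, $m(\mu)'$, and then reconcile with $\Theta(\mu)$ using Lemma~\ref{box} once more.

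The main obstacle is the equality case in the final step: when $\operatorname{Max}_\leq(\nu,\nu')=\operatorname{Max}_\leq(\mu,m(\mu)')$, one must carefully exploit that $\nu\leq\mu$ and $\nu\leq m(\mu)'$ simultaneously, distinguish whether $\Theta(\mu)=\mu$ or $=m(\mu)'$, and verify that $\nu\preceq\Theta(\mu)$ falls under the ``$\nu=\Theta(\mu)$'' or ``$\nu=\Theta(\mu)'$ with $\Theta(\mu)\leq\Theta(\mu)'$'' clause — this requires Lemma~\ref{box2} to guarantee the orientation of $\Theta(\mu)$ versus $\Theta(\mu)'$ matches the second clause of $\preceq$. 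Everything else is a routine translation of the old unitriangularity through the $\phi$-twist identity $d_{\nu,\Psi(\mu)}=d_{\nu',\Psi(m(\mu))}$.
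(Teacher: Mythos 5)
Your overall strategy is the paper's: the diagonal entries are handled exactly as in the paper, via the compatibility $d_{\lambda,\Psi(\mu)}=d_{\lambda',\Psi(m(\mu))}$, and the off-diagonal vanishing is to be obtained by bounding the pair $\{\nu,\nu'\}$ and invoking the third clause of $\preceq$. Your condition~(1) is correct. The genuine gap is in condition~(2), at the step ``$d_{\nu,\Psi(\mu)}=d_{\nu',\Psi(m(\mu))}=0$ unless $\nu'\leq m(\mu)$, i.e.\ unless $\nu\leq m(\mu)'$'': this rewriting silently assumes that the total order $\leq$ of Hypothesis~\ref{hy} is reversed by $\lambda\mapsto\lambda'$, which is not part of the hypotheses (the bullets only define $\lambda'$ and $m$ and give Lemma~\ref{box}), and which fails for instance for the lexicographic order used in Section~\ref{sec:application}. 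The error then propagates: from the two correct facts $\nu\leq\mu$ and $\nu'\leq m(\mu)$ your inequality $\operatorname{Max}_{\leq}(\nu,\nu')\leq\operatorname{Max}_{\leq}(\mu,m(\mu)')$ does not follow (by Lemma~\ref{box} its right-hand side is just $\mu$, and nothing bounds $\nu'$ by $\mu$), and your identity $\operatorname{Max}_{\leq}(\Theta(\mu),\Theta(\mu)')=\operatorname{Max}_{\leq}(\mu,m(\mu)')$ is false whenever $m(\mu)>\mu$, in which case the left-hand side is $m(\mu)$. The usable identity is $\operatorname{Max}_{\leq}(\Theta(\mu),\Theta(\mu)')=\operatorname{Max}_{\leq}(\mu,m(\mu))$ (a consequence of Lemma~\ref{box}), and the paper never converts $\nu'\leq m(\mu)$ into a statement about $\nu$: it splits into the case $\Theta(\mu)=\mu$, where $m(\mu)\leq\mu$ and both $\nu$ and $\nu'$ are bounded by $\operatorname{Max}_{\leq}(\mu,\mu')$, and the case $\Theta(\mu)=m(\mu)'$, where $\mu<m(\mu)$ and both are bounded by $\operatorname{Max}_{\leq}(m(\mu),m(\mu)')$. (Your side remark deducing $\Theta(\lambda)\preceq\lambda$ from the second clause of $\preceq$ shows the same confusion: that clause would require $m(\lambda)'=\lambda'$, not $m(\lambda)'<\lambda$; in any case $\Theta(\lambda)$ is by definition the $\preceq$-maximum of $\{\lambda,m(\lambda)'\}$, so the inequality goes the other way.)

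Beyond this, the equality case $\operatorname{Max}_{\leq}(\nu,\nu')=\operatorname{Max}_{\leq}(\Theta(\mu),\Theta(\mu)')$, which you yourself single out as the main obstacle, is left as a plan rather than a proof, and the candidate set is misidentified: equality of pair-maxima forces $\{\nu,\nu'\}=\{\Theta(\mu),\Theta(\mu)'\}$, i.e.\ $\nu\in\{\mu,\mu'\}$ or $\nu\in\{m(\mu),m(\mu)'\}$ according to the case, not $\nu\in\{\mu,m(\mu)'\}$; one must then check $\nu\preceq\Theta(\mu)$ when $\nu=\Theta(\mu)'$, using the second clause of $\preceq$ together with Lemmas~\ref{box} and~\ref{box2} (the published proof is itself terse on exactly this point, but its inequalities are the correct ones). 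So, as written, the proposal does not establish the off-diagonal vanishing: the $m(\mu)$ versus $m(\mu)'$ confusion must be repaired, and the equality case actually argued, before the argument closes.
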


\begin{proof}
  Assume that $\lambda \in \BB$. 
 \begin{itemize}
 \item Assume that $\Theta (\lambda) = \textrm{Max}_{\preceq } (\lambda,
m (\lambda)')=\lambda$. This means that we have $$\textrm{Max}_{\leq}
(\lambda, \lambda')\geq \textrm{Max}_{\leq} ( m (\lambda), m
(\lambda)').$$ As $\lambda$ is in $\BB$, we already know that
$d_{\lambda,\Psi(\lambda)}=1$. In addition, we have
$d_{\mu,\Psi(\lambda)}=0$ unless $\mu \leq {\lambda}$. We also have
$d_{\nu,\Psi(m (\lambda))}=0$ unless $\nu \leq m (\lambda)$.

       Assume that  $d_{\mu,\widetilde{\Psi}(\lambda)} \neq 0$.  We then have
$\widetilde{\Psi} (\lambda)=\Psi (\lambda)$, so $d_{\mu,\Psi(\lambda)} \neq 0$.  We
need to show that  $\mu \preceq \lambda$. We  have $\mu \leq  {\lambda}$,
and thus $\mu \leq \textrm{Max}_{\leq} (\lambda, \lambda')$.  We also have
$d_{\mu',\Psi(m (\lambda))} \neq 0$, and thus $\mu' \leq  m
(\lambda)$. But we have $m (\lambda)\leq \textrm{Max}_{\leq}
(\lambda, \lambda')$. Thus the result follows in this case.

  \item Assume on the other hand that 
 $$\Theta (\lambda) = \textrm{Max}_{\preceq} (\lambda, m (\lambda)')=
   m(\lambda)'.$$
       This means that we have $\textrm{Max}_{\leq} (\lambda,
\lambda')\leq \textrm{Max}_{\leq}  (m (\lambda), m (\lambda)')$.
We already know  $d_{{m (\lambda)},\Psi({m (\lambda)})}=1$
because $m (\lambda)$ is in $\BB$, and thus that $d_{{m
(\lambda)'}{\lambda}}=1$. 
 
    Assume that $d_{\mu,\widetilde{\Psi}(\Theta (\lambda))} \neq 0$. We thus have
$d_{\mu,\Psi(\lambda)} \neq 0$, and then we need to show that $\mu \preceq
\Theta
(\lambda)$, or in other words that $\mu \preceq m (\lambda)$. We
must have $\mu \leq {\lambda}$, which implies that $\mu \leq
\textrm{Max}_{\leq} (\lambda, \lambda')\leq \textrm{Max}_{\leq}
(m(\lambda), m (\lambda)')$. On the other hand, we have
$d_{{\mu',\Psi(m (\lambda))}}\neq 0$, and thus $\mu'\leq m
(\lambda)$, which implies that $\mu' \leq \textrm{Max}_{\leq}
(m(\lambda), m (\lambda)')$. This means that $\mu \preceq m
(\lambda)$, whence the result.
 \end{itemize}
 This proves the claim.
  \end{proof}
Let us now give some consequences of the above theorem.  First, we get a
new classification of the set of simple modules for $L\mathcal{H}$:
  $$ \operatorname{Irr} (L \mathcal{H}):=\{\widetilde{\Psi}(\lambda)  \ |\ \lambda \in 
     \widetilde{\BB}\}.$$
For $\lambda \in   \widetilde{\BB}$, we denote by $\widetilde{m} (\lambda)$ the element of $\widetilde{\BB}$ such that $\widetilde{\Psi} (\lambda)^{\phi}\simeq \widetilde{\Psi} 
(\widetilde{m}(\lambda))$.
   
  \begin{itemize}
   \item  Assume that $M\in \operatorname{Irr} (L \mathcal{H})$ is such that
$M^{\phi} \simeq M$. Set $\lambda:=\Phi^{-1} (M)$. Then $m
(\lambda)=\lambda$, and thus $\Theta (\lambda)=\lambda$. 
   \item  Assume that $M\in \operatorname{Irr} (L \mathcal{H})$ is such that
$M^{\phi}$ is not isomorphic to  $M$. Set $\lambda:=\Phi^{-1} (M)$. Then
$\Theta (m (\lambda))=\Theta (\lambda)'$. 
   \end{itemize}
   As a conclusion, if $\lambda \in \widetilde{\BB}$, then we have 
   $$\widetilde{m} (\lambda)=\left\{ \begin{array}{lc} 
    \lambda & \text{if }  \widetilde{\Psi}(\lambda)^{\phi} \simeq \widetilde{\Psi}(\lambda)\  (\iff m(\lambda)=\lambda),\\
   \lambda' & \text{otherwise}.
  \end{array} \right.$$
  Note that, in the latter case, we have $\lambda'\neq \lambda$ by Lemma \ref{box2}, but that we have also $\lambda'\neq \lambda$  in the first case in general.
  
   Thus, the basic set $\widetilde{\BB}$ allows a parametrization
of the set of simple $L \mathcal{H}$-modules where the twist by the involution is
more convenient to understand on the parametrization. In terms of
$\BB$, we have:
   $$\widetilde{\BB}=\{ \lambda\ |\ \lambda \in \BB,\
     m (\lambda)< \lambda \} \cup \{ \lambda'\ |\ \lambda \in
     \BB,\ m (\lambda)< \lambda \} \cup \{ \lambda\ |\ \lambda
     \in \BB,\ m (\lambda)=\lambda \}.$$
We present a number of applications below. 

\section{Consequences for symmetric groups and their Hecke algebras}
\label{sec:application}

    The aim of this section is to apply the previous section to the case of
the Hecke algebra of type $A$, and to the case of the symmetric and
alternating groups.

\subsection{General case}\label{gene}

    We now apply our result to the case of the Hecke algebra of the
symmetric group. This will be a first application of Theorem \ref{main}. 

    Let $n$ be a positive integer. We write $\mathcal{H}$ for the Hecke
algebra of the symmetric group $\sym_n$ over a commutative ring $R$ with
unit, and let $q\in R$ be invertible. We have a presentation of
$\mathcal{H}$ by:
\begin{itemize}
\item generators: $T_s$,  where $s\in S:=\{s_1,\ldots,s_{n-1}\}$;
\item relations:  $T_{s_i} T_{s_j}=T_{s_j} T_{s_i}$ if $|i-j|>1$, $T_{s_i}
T_{s_{i+1}}T_{s_i}=T_{s_{i+1}} T_{s_i}T_{s_{i+1}} $ for all $i\in
\{1,\ldots,n-2\}$, and the relation  $(T_i-q)(T_i+1)=0$ for all
$i=1,\ldots,n-1$.
\end{itemize}
    We assume that we have a specialization $\theta: R \to L$, where $L$
is a field. We denote 
$$e:= \operatorname{min} (i
\in \mathbb{N}\ | \ 1+\theta(q)+\ldots +\theta (q)^{i-1}=0 ),$$
and assume that $e\in \mathbb{N}_{>1}$. In this case, we are in the
setting of the previous section. 
\begin{itemize}
 \item It is known that the $\mathcal H$-simple modules are naturally
 indexed by the set $\Lambda$ of all partitions of rank $n$, that is, non
 increasing sequences $\lambda=(\lambda_1,\ldots,\lambda_r)$ of integers
 of total sum $n$. \item $\BB$ is the set $\mathcal R_n^e$  of $e$-regular
 partitions, that is, the set of partitions
 $\lambda=(\lambda_1,\ldots,\lambda_r)$ of $n$  such that there exist no
 $i\in \mathbb{N}$ such that $\lambda_i=\ldots=\lambda_{i+e-1}\neq 0$. 
 \item $\leq$ is any refinement of the dominance order on partitions; for
example, take $\leq$ to be the lexicographic order on partitions. 
 \item $\phi$ is the algebra  automorphism sending $T_i$ to
$-T_i+(q-1)$ for all $i=1,\ldots,n-1$.
 \item For $\lambda \in \Lambda$, it is well-known that  $\lambda'$ is the
 usual conjugate partition of $\lambda$, that is, the partition obtained
 by reflecting the Young diagram of the partition $\lambda$ along its main
 diagonal.
 \item Finally, $m$ is known as the Mullineux involution, which we still denote
by $m$. It can be computed using different recursive
algorithms which we don't recall here \cite{BOX,K,Mu}.  
\end{itemize}

    Applying the main result of the previous section thus yields the
following result: 
 
 \begin{Prop}\label{bset}
 $( \mathcal{H},\theta)$ admits a unitriangular basic set $(\widetilde{\BB},\preceq, \widetilde{\Psi})$, where $\preceq$ 
  is defined from $\leq$ in (\ref{newor}), $\widetilde{\Psi}=\Psi \circ \Theta^{-1}$, and 
  $$\widetilde{\BB}=\{ \lambda\in \mathcal R_n^p \ |\ 
    m(\lambda)< \lambda \} \sqcup \{ \lambda'\ |\ \lambda\in \mathcal R_n^p,\ m
    (\lambda)< \lambda \} \sqcup \{ \lambda\in \mathcal R_n^p\ |\
    m(\lambda)=\lambda \}.$$
 \end{Prop}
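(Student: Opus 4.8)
The plan is to recognize the data listed just before the statement as an instance of Hypothesis~\ref{hy}, to invoke Theorem~\ref{main}, and then to unwind the set $\widetilde{\BB}=\Theta(\BB)$ using the description recorded immediately after that theorem.

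First I would verify Hypothesis~\ref{hy}(1). With $\Lambda$ the set of all partitions of $n$, $\BB=\mathcal R_n^e$, $\Psi\colon\mu\mapsto D^{\mu}$ and $\leq$ any linear refinement of the dominance order, the triple $(\BB,\leq,\Psi)$ is a unitriangular basic set for $(\mathcal H,\theta)$: this is the classical unitriangularity of the decomposition matrix of the type-$A$ Hecke algebra, i.e. $d_{\mu,\mu}=[S^{\mu}:D^{\mu}]=1$ for $\mu\in\mathcal R_n^e$, and $d_{\lambda,\mu}=[S^{\lambda}:D^{\mu}]\neq 0$ forces $\lambda\trianglelefteq\mu$, hence $\lambda\leq\mu$. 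For Hypothesis~\ref{hy}(2), the assignment $T_i\mapsto -T_i+(q-1)$ respects the braid and quadratic relations of $\mathcal H$ and squares to the identity (since $-(-T_i+(q-1))+(q-1)=T_i$), so it defines an involutive algebra automorphism $\phi$. Finally I would identify the two permutations attached to $\phi$ in the paragraph after Hypothesis~\ref{hy}: twisting a Specht (resp.\ simple) module by this sign automorphism replaces its label by the conjugate partition (resp.\ by its Mullineux image), so $\lambda\mapsto\lambda'$ is ordinary conjugation and the map $m$ of Hypothesis~\ref{hy} is the Mullineux involution on $\mathcal R_n^e$; both facts are classical and are exactly what the bullet list preceding the statement records (see \cite{BOX,K,Mu}).

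Granting this, Theorem~\ref{main} applies verbatim and shows that $(\widetilde{\BB},\preceq,\widetilde{\Psi})$ with $\widetilde{\BB}=\Theta(\mathcal R_n^e)$, $\widetilde{\Psi}=\Psi\circ\Theta^{-1}$ and $\preceq$ built from $\leq$ as in~(\ref{newor}) is a unitriangular basic set for $(\mathcal H,\theta)$. It then remains to rewrite $\Theta(\mathcal R_n^e)$ as the displayed union, which is the description of $\widetilde{\BB}$ in terms of $\BB$ given after Theorem~\ref{main}: splitting $\mathcal R_n^e$ according to whether $m(\lambda)=\lambda$, $m(\lambda)<\lambda$ or $m(\lambda)>\lambda$, one has $\Theta(\lambda)=\lambda$ in the first two cases (contributing $\{\lambda\in\mathcal R_n^e:m(\lambda)=\lambda\}$ and $\{\lambda\in\mathcal R_n^e:m(\lambda)<\lambda\}$) and $\Theta(\lambda)=m(\lambda)'$ in the third; since $m$ is an involution, $m(\lambda)$ then runs over $\{\mu\in\mathcal R_n^e:m(\mu)<\mu\}$, so the third case contributes $\{\mu':\mu\in\mathcal R_n^e,\ m(\mu)<\mu\}$. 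The three pieces are pairwise disjoint because $\Theta$ is injective (the Proposition before Theorem~\ref{main}) and the three cases partition $\mathcal R_n^e$; this justifies the symbol $\sqcup$, and the proof is complete.

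I do not expect a genuine obstacle: the statement is a direct specialization of Theorem~\ref{main}, and the only non-formal inputs — unitriangularity of the type-$A$ decomposition matrix and the description of the sign automorphism's action on the labels of Specht modules and of simples — are classical and already taken for granted in the bullet list. The one point deserving a line of care is that this last description holds for the Hecke algebra over an arbitrary specialization $\theta$, not only for $\sym_n$ in characteristic $p$; this is covered by the references cited there.
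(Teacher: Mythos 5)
Your proposal is correct and follows the same route as the paper: the bullet list before the statement is precisely the verification of Hypothesis~\ref{hy} (classical unitriangularity of the type-$A$ decomposition matrix, the involutive automorphism $T_i\mapsto -T_i+(q-1)$, conjugation and Mullineux as the induced maps on labels), and the paper then simply invokes Theorem~\ref{main} together with the description of $\widetilde{\BB}=\Theta(\BB)$ given at the end of Section~\ref{subsec:newbasic}. Your unwinding of $\Theta(\mathcal R_n^e)$ by the trichotomy $m(\lambda)=\lambda$, $m(\lambda)<\lambda$, $m(\lambda)>\lambda$, with disjointness from the injectivity of $\Theta$, is exactly the argument the paper leaves implicit.
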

 
    We thus obtain a new classification of the set of simple modules.
Comparing to the usual classification given by the $e$-regular partitions,
the main advantage of this classification is that the twist by the
automorphism is easy to read on it. 
 \begin{itemize}
  \item  Assume that $e=2$. In this case, we have, for all $2$-regular
partitions, $m(\lambda)=\lambda$, because the involution $m$ is the
identity. This happens for example when $q=1$ and $L$ is a field of
characteristic $2$. In this case, the Hecke algebra is nothing but the group algebra of the
symmetric group over this field.  We see in this case that
$\BB=\widetilde{\BB}$. However, the orders $\leq$ and
$\preceq$ are of course different.
   \item The most interesting case is the case where $e\neq 2$. Indeed,
the sign representation 
is labeled by the partition $(1,\ldots,1)$, and this label lies  
in $\widetilde{\BB}$ as in the semisimple case. However, if
we want to obtain an explicit description of $\widetilde{\BB}$,
we need to compute the Mullineux involution for each $e$-regular
partition, which is a long, recursive and non-trivial problem. It would
thus be desirable to describe the $e$-regular  partitions satisfying
$m(\lambda)\leq \lambda$ and $m(\lambda)= \lambda$ without computing
$m(\lambda)$. We hope to come back to this problem in future work. 
\end{itemize}
    The above result can in particular be applied to the case where $q=1$
and $R$ is a field of characteristic $p>2$, so that $e=p$. The algebra $H$
is then  nothing but the group algebra of the symmetric group over a field
of characteristic $p$. It is now natural to ask if the unitriangular basic
set satisfies the hypothesis of Theorem \ref{thm:basicGH}. It should not
be the case, otherwise we would always have a unitriangular basic set for
the alternating groups, and we have already seen that this is not the
case. The problem comes from the fact that our basic set is not stable
with respect to the sign representation. In fact, from Lemma
\ref{mullnou}, we deduce:
\begin{itemize}
\item  if $\lambda \neq m(\lambda)$, then both $\lambda$ and $\lambda'$
are in the basic set $\widetilde{B}$, and we do  have $\widetilde{\Psi}
(\lambda')=\varepsilon \otimes \widetilde{\Psi} (\lambda)$;
\item  if $\lambda = m(\lambda)$, then $\lambda'$ is not  in
$\widetilde{\BB}$.
\end{itemize}

\begin{exa}
    Take $p=3$ and $n=5$. Then the set of $3$-regular partitions of $n$ is:
$$\BB=\{ (2, 2, 1), (3, 1, 1), (3, 2 ), (4, 1), (5)\}.$$ 
Here we consider the lexicographic order on partitions. For each of these
partitions, we compute the image under the Mullineux involution to find
our new basic set $\widetilde{\BB}$ (computed with respect to the
lexicographic order).  We have $m(2,2,1) = (4,1)>(2,2,1)$,
so that $(4,1) \in \widetilde{\BB}$ and $(2,1,1,1)=(4,1)' \in \widetilde{\BB}$. 
    We have $m(3,1,1) = (3,1,1) \in \widetilde{\BB}$. And
$m(3,2)=(5)>(3,2)$, so that $(5) \in \widetilde{\BB}$ and $(1,1,1,1,1)=(5)' \in
\widetilde{\BB}$. We get:
  $$\widetilde{\BB}=\{ (5), (1,1,1, 1, 1), (3, 1,1), (4, 1), (2,1,1,1)\}.$$
    We can see that, in this particular case, the unitriangular basic set is
stable with respect to the sign and $\varepsilon$-equivariant. 
\end{exa}

 
\begin{exa}
    Take $p=3$ and $n=8$. Again using the lexicographic order, we get:
  $$\widetilde{\BB}=\{ (4, 2, 1, 1 ), ( 2, 2, 2, 1, 1 ), ( 2, 2, 1, 1, 1, 1 ), 
  (3, 1, 1, 1, 1, 1 ), ( 3, 2, 1, 1, 1 ), ( 2, 1, 1, 1, 1, 1, 1 ), 
  ( 1, 1, 1, 1, 1, 1, 1, 1 ),$$
  $$ (5, 2, 1 ), ( 5, 3 ), ( 6, 1, 1 ),( 6, 2 ), 
  ( 7, 1 ), ( 8 ) \}.$$
    Note that the conjugate of $(4, 2, 1, 1 )$ is not in the basic set.
Thus, the unitriangular basic set isn't stable with respect to 
tensoring by the sign character.
\end{exa}
 
 \begin{Rem}\label{Rorder}
    In this section, we have considered the lexicographic order for our
examples, but we can in fact choose any total order which is compatible
with the dominance order. For example, we can take the following one. For
$(\lambda,\mu) \in \Lambda^2$
 $$\lambda \leq' \mu \iff \lambda' \geq \mu',$$
where $\geq$ is the lexicographic order. Quite remarkably, this order
gives a different basic set in general. For example, the partition
$\lambda=(6,2,2,1,1 )$ is in the basic set associated with the
lexicographic order when $p=3$, because this is a $3$-regular partition and
we have $m(\lambda)=(5,3,2,2)$. So we have $m(\lambda )\leq \lambda$.
However we see that $m(\lambda )\geq' \lambda$ so that $(5,3,2,2)$ (and
its conjugate) are in the unitriangular basic set associated with $\leq'$,
while $(6,2,2,1,1)$ is not. 
 \end{Rem}

    Of course, all of the above results make sense if we restrict ourselves to
blocks of the symmetric group. Let $\gamma$ be a $p$-core and assume that
we have constructed our unitriangular basic set
$(\widetilde{\BB}_{\gamma}, \preceq, \widetilde{\Psi})$ with respect to an
order associated to the block $\mathfrak{B}_{\gamma}$ of the symmetric
group. Then we have :
  $$\widetilde{\BB}_{\gamma}=\widetilde{\BB}_{\gamma}^1
  \sqcup\widetilde{\BB}_{\gamma}^2,$$
  where 
  $$\widetilde{\BB}_{\gamma}^1 =\{ \lambda\in \mathcal R_n^p \ |\ 
    m(\lambda)< \lambda \} \sqcup \{ \lambda'\ |\ \lambda\in \mathcal R_n^p,\ m
    (\lambda)< \lambda \} $$
   $$ \mbox{and} \quad \widetilde{\BB}_{\gamma}^2 =\{ \lambda\in \mathcal R_n^p\ |\
    m(\lambda)=\lambda \}.$$
    In the final two subsections, we will discuss two favorable cases
    which allow the construction of a unitriangular basic set respecting the
    hypotheses  of Theorem \ref{thm:basicGH}:
\begin{itemize}
\item Subsection \ref{subsec:consAn} concerns a case where
$\widetilde{\BB}_{\gamma}^2=\emptyset$. 
\item In Subsection \ref{subsec:last}, we study a case where there exists a bijection
$$\rho:\widetilde{\BB}_{\gamma}^2 \to \mathcal{G}_{\gamma},$$ where
$\mathcal{G}_{\gamma}$ is the set of self-conjugate partitions whose
diagonal $p$-hooks have length not divisible by $p$, satisfying the following two
properties:
\begin{itemize}
\item For all $\mu \in \widetilde{\BB}_{\gamma}^2$, we have
$d_{\rho(\mu),\mu}=1$.
\item For all $\mu \in \widetilde{\BB}_{\gamma}^2$  and for all $\lambda
\in \widetilde{\BB}_{\gamma}$ such that $\rho (\mu)\prec \lambda \prec
\mu$, we have $d_{\rho (\mu),\lambda}=0$.
\end{itemize}
    Then, in this case, by  \cite{Be}, it follows from the form of the
decomposition matrix that $(\widetilde{\BB}_{\gamma}',\preceq,
\widetilde{\Psi}')$ is a unitriangular $p$-basic set for
$\mathfrak{B}_{\gamma}$, where 
$$\widetilde{\BB}_{\gamma}'=\widetilde{\BB}_{\gamma}^1 \sqcup \rho(
\widetilde{\BB}_{\gamma}^2),$$
and, for all $\lambda \in \widetilde{\BB}_{\gamma}'$, we have:
 $$\widetilde{\Psi}'(\lambda)=
 \left\{\begin{array}{ll}
 \widetilde{\Psi} (\lambda ) & \text{if } \lambda\in \widetilde{\BB}_{\gamma}^1,\\
  \widetilde{\Psi} (\rho^{-1}(\lambda )) & \text{if } \lambda\in \rho(\widetilde{\BB}_{\gamma}^2).\\
 \end{array}\right.$$
From this, one can deduce a unitriangular $p$-basic set for
$\mathfrak{b}_{\gamma}$ by Theorem \ref{thm:basicGH}.
\end{itemize}

Of course, the above unitriangular basic set $\widetilde{\BB}$ satisfies
the assumptions of Theorem \ref{thm:basicGH} if no self-Mullineux $e$-regular partition exists. This is exactly what happens for the
blocks considered in the next subsection. There is another way to obtain
the desired unitriangular basic set which is explained in \cite{Be}.

\subsection{Blocks with odd weight}
\label{subsec:consAn}

\begin{Th}
\label{thm:poidsimpair} Let $p$ be an odd prime number. If $\mathfrak b_{\gamma}$ is
a $p$-block of $\alt_n$ with an odd $p$-weight, then $\mathfrak b_{\gamma}$ has a
unitriangular $p$-basic set. 
\end{Th}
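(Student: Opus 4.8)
The plan is to deduce the statement from Theorem~\ref{thm:basicGH} applied to the pair $(\sym_n,\alt_n)$, using the construction of \S\ref{subsec:newbasic}. Let $w$ be the $p$-weight of $\mathfrak b_\gamma$, which is odd by hypothesis (so in particular $w\geq 1$ and we are never in a defect-zero situation), and let $\mathfrak B$ be the union of the $p$-blocks of $\sym_n$ covering $\mathfrak b_\gamma$; then Hypothesis~\ref{hyp:cadre} holds with $G=\sym_n$, $H=\alt_n$, $x=(1\,2)$ and $\varepsilon=\operatorname{sgn}$, and by Theorem~\ref{thm:basicGH} it suffices to exhibit an $\varepsilon$-stable unitriangular $p$-basic set of $\mathfrak B$ whose labelling bijection is $\varepsilon$-equivariant: its restriction to $\alt_n$ is then a unitriangular $p$-basic set of $\mathfrak b_\gamma$. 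If $\gamma\neq\gamma'$ this is immediate and uses nothing about the parity of $w$: then $\mathfrak B=\mathfrak B_\gamma\sqcup\mathfrak B_{\gamma'}$, and one takes the classical unitriangular $p$-basic set $\{\chi_\mu\mid\mu\in\mathcal R_n^p,\ \mu_{(p)}=\gamma\}$ of $\mathfrak B_\gamma$ (ordered by a refinement of the dominance order, $\chi_\mu\mapsto\varphi_\mu$; see~\cite{JK}) together with its image under $\operatorname{sgn}$, which labels $\mathfrak B_{\gamma'}$ via $\operatorname{sgn}\otimes\chi_\mu\mapsto\operatorname{sgn}\otimes\varphi_\mu=\varphi_{m(\mu)}$; since the decomposition matrix of $\mathfrak B$ is block-diagonal and decomposition numbers are $\operatorname{sgn}$-stable, this is an $\varepsilon$-stable unitriangular $p$-basic set with $\varepsilon$-equivariant bijection.

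So assume $\gamma=\gamma'$, hence $\mathfrak B=\mathfrak B_\gamma$. Apply Theorem~\ref{main} to $\sym_n$ with $\phi$ the automorphism realizing $-\otimes\operatorname{sgn}$ (cf.~\S\ref{gene}) and restrict to $\mathfrak B_\gamma$: by Proposition~\ref{bset} one obtains a unitriangular $p$-basic set $(\widetilde{\BB}_\gamma,\preceq,\widetilde{\Psi})$ of $\mathfrak B_\gamma$ with $\widetilde{\BB}_\gamma=\widetilde{\BB}_\gamma^1\sqcup\widetilde{\BB}_\gamma^2$, where $\widetilde{\BB}_\gamma^2=\{\lambda\in\mathcal R_n^p\mid m(\lambda)=\lambda,\ \lambda_{(p)}=\gamma\}$ is the set of Mullineux-fixed $p$-regular partitions in the block. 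The subset $\widetilde{\BB}_\gamma^1$ is closed under conjugation of partitions, hence $\varepsilon$-stable, and on it $\widetilde{\Psi}(\lambda')=\operatorname{sgn}\otimes\widetilde{\Psi}(\lambda)$, so $\widetilde{\Psi}$ is $\varepsilon$-equivariant there. Therefore $(\widetilde{\BB}_\gamma,\preceq,\widetilde{\Psi})$ meets the hypotheses of Theorem~\ref{thm:basicGH} as soon as $\widetilde{\BB}_\gamma^2=\emptyset$, and the whole proof reduces to the assertion that \emph{a $p$-block of $\sym_n$ of odd $p$-weight contains no Mullineux-fixed $p$-regular partition}; granting this, $\Res_{\alt_n}^{\sym_n}(\widetilde{\BB}_\gamma)$ is the desired unitriangular $p$-basic set of $\mathfrak b_\gamma$.

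This last, purely combinatorial, assertion is the step I expect to be the main obstacle. I would prove it by comparing Mullineux-fixed points with self-conjugate partitions: by a result of Bessenrodt and Olsson (equivalently, via the bijection $\rho$ of \S\ref{subsec:last}), the Mullineux-fixed $p$-regular partitions lying in $\mathfrak B_\gamma$ are equinumerous with the set $\mathcal C_\gamma$ of self-conjugate partitions having $p$-core $\gamma$ all of whose diagonal hook lengths are coprime to $p$ (cf.~Lemma~\ref{lemme:CSn}). For $\nu\in\mathcal C_\gamma$, the Remark following Lemma~\ref{lemme:TSn} shows that the middle component $\nu^{(p+1)/2}$ of the $p$-quotient of $\nu$ is empty; since $|\nu^j|=|\nu^{p+1-j}|$ by~\eqref{eq:conjcorequotient}, this forces $w=\sum_{j=1}^{p}|\nu^j|=2\sum_{j=1}^{(p-1)/2}|\nu^j|$ to be even. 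As $w$ is odd, $\mathcal C_\gamma=\emptyset$, so $\widetilde{\BB}_\gamma^2=\emptyset$, and the theorem follows by Theorem~\ref{thm:basicGH} as explained above. (Alternatively, the same parity can be read directly off the Mullineux symbol of a fixed point, where $m(\lambda)=\lambda$ imposes rigid constraints on the pairs of entries $(a_i,r_i)$.)
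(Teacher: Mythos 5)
Your proposal is correct and follows essentially the same route as the paper: reduce to the case $\gamma=\gamma'$, take the basic set $(\widetilde{\BB}_\gamma,\preceq,\widetilde{\Psi})$ of Proposition~\ref{bset}, show $\widetilde{\BB}_\gamma^2=\emptyset$ via the parity argument on the $p$-quotients of partitions in $\mathcal C_\gamma$ (empty middle component plus the conjugation symmetry forces even weight), and conclude with Theorem~\ref{thm:basicGH}. The differences are only presentational: the paper phrases the parity step as $\mathfrak B_\gamma\cap\mathcal T=\emptyset$ and leaves implicit the blockwise equinumerosity of Mullineux-fixed partitions with $\mathcal C_\gamma$ (Bessenrodt--Olsson, cf.~\cite{BG}), which you state explicitly, and it dismisses the $\gamma\neq\gamma'$ case in one line where you spell it out (there, note only that the conjugate block must carry the reversed dominance order for unitriangularity).
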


\begin{proof}
    Let $\mathfrak B_{\gamma}$ be the $p$-block of $\sym_n$ covering
$\mathfrak b_{\gamma}$. We only have to consider the case where $\mathfrak
B_{\gamma}$ is $\varepsilon$-stable. First, we remark that 
$$\mathfrak B_{\gamma}\cap \mathcal T = \emptyset.$$
    Indeed, any character $\chi_{\lambda}\in\mathfrak B_{\gamma}\cap
\mathcal T$ satisfies $\lambda_{(p)} = \gamma$ and
$\lambda^{(p)}\in\mathcal C_{\gamma}$. In particular, if $w$ is the
$p$-weight of $\mathfrak B_{\gamma}$, then
$$w = \sum_{i=1}^{p}|\lambda^{i}| = 2\sum_{i=1}^{(p-1)/2}|\lambda^{i}|,$$
which contradicts the hypothesis that $w$ is odd.
 If then $(\widetilde{\BB}_{\gamma},
\preceq,\widetilde{\Psi})$ is a unitriangular basic set constructed from the
unitriangular $p$-basic set indexed by the $p$-regular partitions (using for example the lexicographic order),
  then
$\widetilde{\BB}_{\gamma} = \widetilde{\BB}_{\gamma}^1$, whence, by the above discussion, this 
unitriangular $p$-basic set  restricts to a unitriangular basic set of $\mathfrak b_{\gamma}$, as required.
\end{proof}

\begin{Rem}
The condition of Theorem~\ref{thm:poidsimpair} is not necessary. Indeed, we
see in Example~\ref{ex:S6A6} that the principal $3$-block of $\alt_6$, which has $3$-weight $2$, has a unitriangular $3$-basic set.
\end{Rem}

\subsection{The case of $\sym_{23}$ and $p=3$}
\label{subsec:last}
    We now consider the case $n=23$ and $p=3$, and the $p$-block associated to
the $3$-core $(3,1,1)$. We also consider the order on partitions given in
Remark \ref{Rorder} (this is thus not the lexicographic order). 

    The unitriangular basic set of the block of the symmetric group with
core $(3,1,1)$ in Proposition \ref{bset} contains  $65$ characters:
\begin{itemize}
\item $31$ characters labeled by the $3$-regular partitions
$\lambda$ such that $\lambda > m(\lambda)$,
\item $31$ characters labeled by  the conjugates of the above partitions,
\item $3$ characters labeled by the partitions which are stable with
respect to the Mullineux involution; these are $(12,6,5)$, $(10, 4, 4, 3,
1, 1 )$ and $( 9, 6, 3, 3, 1, 1 )$.
\end{itemize}
    We denote by $\widetilde{\BB}^1_{(3,1,1)}$ the partitions labeling the
characters of the first two sets of characters above, and by
$\widetilde{\BB}^2_{(3,1,1)}$ the partitions labeling the  three
characters of the last set, so that 
$$\widetilde{\BB}^2_{(3,1,1)}=\{(12,6,5),(10, 4, 4, 3, 1, 1 ),( 9, 6, 3,
3, 1, 1 )\}.$$
We now focus on the elements of $\operatorname{IPr}_p(G)$ labeled by these last three partitions.  
As in section \ref{sfock}, we consider the following element of the Fock
space $\mathcal{F}_v$:
 $$A(9,6,3,3,1,1)=F_2 F_1^{(2)} F_0^2 F_2^{(3)} F_1 F_0^2 F_2 F_1^{(2)}
 F_2 F_0^{(2)} F_1 F_2^{(2)} F_0 F_1 F_2 F_0 .\emptyset,$$
which gives the following element of the Fock space:
{\small
$$\begin{array}{rcl}  A(9,6,3,3,1,1)&=&
(9,6,3,3,1)+v (8,7,3,3,1,1)+v (9,5,4,3,1,1)+v^2 (8,5,5,3,1,1) +v^2 (7,7,4,3,1,1))\\
&+&v (9, 4, 4, 3, 2, 1 )+ (v^3 + v)  (8, 4, 4, 3, 2, 2 ) + v^2 (9, 4, 3, 3, 3, 1)+v^2 (8, 4, 3, 3, 3, 2 )+ v^3 (7, 4, 4, 3, 3, 2)\\
 &+&v^4 + v^2)  (8, 4, 4, 3, 1, 1, 1, 1 )+ v^2 (6, 6, 5, 3, 2, 1)+(v^4 + v^2) (6, 6, 4, 3, 2, 2 ) +v^3 (6, 6, 3, 3, 3, 2 )\\
  &+&v^4 (6, 5, 4, 3, 3, 2 )+v^3 (6, 6, 5, 3, 1, 1, 1)+( v^5 + v^3)  (6, 6, 4, 3, 1, 1, 1, 1 )+ v^3 (6, 4, 4, 3, 3, 2, 1)\\
 &+&v (9, 6, 2, 2, 2, 2 ) +v^2 (8, 7, 2, 2, 2, 2 )+v^2 (9, 4, 4, 2, 2, 2 )+v^3 (9, 4, 2, 2, 2, 2, 2 )+v^3 (8, 4, 2, 2, 2, 2, 2, 1)\\
  &+& v^3 (6, 6, 5, 2, 2, 2 )+v^4 (6, 6, 2, 2, 2, 2, 2, 1)+ v^4 (6, 4, 4, 3, 2, 2, 2 )+ v^5 (6, 4, 4, 2, 2, 2, 2, 1 ) \\
  &+&v^3 ( 8, 7, 2, 2, 1, 1, 1, 1 ) +v^2 ( 9, 5, 2, 2, 1, 1, 1, 1, 1 )+ v^3 (7, 7, 2, 2, 1, 1, 1, 1, 1)+v^3 (9, 4, 4, 2, 1, 1, 1, 1 )+\\ 
  &+& v^3 (8, 4, 3, 3, 1, 1, 1, 1, 1 )+v^4 (7, 4, 4, 3, 1, 1, 1, 1, 1 ) +v^4  (9, 4, 2, 2, 2, 1, 1, 1, 1 )+ v^4 ( 8, 4, 2, 2, 2, 2, 1, 1, 1 )\\
&+&v^4 (6, 6, 3, 3, 1, 1, 1, 1, 1 )+v^4 (6, 5, 5, 2, 1, 1, 1, 1, 1)+ v^5 (6, 5, 4, 3, 1, 1, 1, 1, 1 )+v^5 (6, 6, 2, 2, 2, 2, 1, 1, 1 ) \\
  &+&  v^6 (6, 4, 4, 2, 2, 2, 1, 1, 1 )+v^2 (9, 4, 4, 3, 1, 1, 1)+ v^3 (6, 5, 5, 3, 3, 1 )+v^4 (6, 4, 4, 3, 3, 1, 1, 1)\\
  &+&v^3 (7,6,5,3,1,1)+v^2 (9, 6, 2, 2, 1, 1, 1, 1 )+v^3( 9, 4, 3, 2, 1, 1, 1, 1, 1 )+v^3( 9, 4, 3, 2, 1, 1, 1, 1, 1 )\\
  &+&v^2 (9, 6, 2, 2, 1, 1, 1, 1 )+v^4 (6, 6, 5, 2, 1, 1, 1, 1 )+v^5( 6,
  4, 4, 3, 2, 1, 1, 1, 1 )+v^3 (7,6,5,3,1,1).
\end{array}$$}

    The assumptions of Proposition \ref{fock} are satisfied. This means in
particular that the decomposition number $d_{(6,5,5,3,3,1),(9,6,3,3,1,1)}$
is $1$. Note that  $(6,5,5,3,3,1)$ is in $\mathcal{G}_{(3,1,1)}$. 

    Now let us consider
 $$A(10,4,4,3,1,1)=F_1 F_2 f_1^3 F_0 F_3^{(4)} F_1 F_0^{(2)}  F_1^{(2)} F_2^{(2)} F_0^{(2)}  F_1^{(2)} F_2 F_0 .\emptyset,$$
which gives the following element of the  Fock space:
{\small
$$\begin{array}{lll}  A(10,4,4,3,1,1)&=&(10,4,4,3,1,1)+v(9,5,4,3,1,1)+v^2(9,4,4,4,1,1)+v^3 (9,4,4,3,2,1)+v^4 (9,4,4,3,1,1,1) \\
&+&v(7,6,5,3,1,1)+v^2 (6,6,6,3,1,1)+v^3 (6,6,5,4,1,1)+v^4(6,6,5,3,2,1)+v^5 (6,6,5,3,1,1,1)\\
&+&v(7,4,4,3,3,2)+v^2 (6,5,4,3,3,2)+v^3(6,4,4,4,3,2)+v^4 (6,4,4,3,3,3)+v^5 (6,4,4,3,3,2,1)\\
&+&v(10,4,2,2,1,1,1,1,1)+v^2(9,5,2,2,1,1,1,1,1)+v^3(9,4,3,2,1,1,1,1,1)+v^4(9,4,2,2,2,1,1,1,1)\\
&+&v^5 (9,4,2,2,1,1,1,1,1,1)+v^2(7,4,4,3,1,1,1,1,1)+v^3 (6,5,4,3,1,1,1,1,1)+v^5(6,4,4,3,2,1,1,1,1)\\
&+&v^5 (6,4,4,3,2,1,1,1,1)+v^6 (6,4,4,3,1,1,1,1,1)+v^5(6,4,4,3,2,1,1,1,1).
\end{array}$$}
    Again, the assumptions of Proposition \ref{fock} are satisfied. This
means in particular that the decomposition number $d_{(9,4,3,2,1,1,1,1,1),
(10,4,4,3,1,1)} $ is $1$ and $(9,4,3,2,1,1,1,1,1)$ is in
$\mathcal{G}_{(3,1,1)}$. 

    Now, let us consider the partition $(12,6,5)$ and the partition
$(12,1,1,1,1,1,1,1,1,1,1,1)$ which is in  $\mathcal{G}_{(3,1,1)}$. Note
that the regularization of this last partition is exactly given by
$(12,6,5)$, so that the associated decomposition number
$d_{(12,1,1,1,1,1,1,1,1,1,1,1),  (12,6,5)  }$ is also $1$.
 

    Now, we will use the same argument as the one used in
\cite{Be}, and already explained in \S \ref{gene}. Let $\mu$ be one of the
above three $3$-regular partitions  fixed by the Mullineux involution.
For each of them, we have found an element $\rho (\mu):=\nu \in
C_{(3,1,1)}$ such that $d_{\nu,\widetilde{\Psi} (\mu) }=1$.  Assume that,
for all  $\lambda$ in  $\widetilde{\BB}$ such that $\nu \prec \lambda
\prec \mu$, we have $d_{\nu,\widetilde{\Psi}(\lambda)}=0$. Here, we have
$\rho (12,6,5) = (12,1,1,1,1,1,1,1,1,1,1,1)$, $\rho (9,6,3,3,1,1) =
(6,5,5,3,3,1)$ and $\rho (10,4,4,3,1,1) = (9,4,3,2,1,1,1,1,1)$. Then, by
\cite{Be}, we have that 
  $$(\widetilde{\BB}_1 \sqcup\{ (9,4,3,2,1,1,1,1,1),(12,1,1,1,1,1,1,1,1,1,1,1),(6,5,5,3,3,1)\}, \preceq, \widetilde{\Psi}')$$
gives a unitriangular basic set for the block which satisfies the
assumption of Theorem \ref{thm:basicGH}. Indeed, in the three cases, the
decomposition number $d_{\nu,\widetilde{\Psi}(\lambda)}$ is $d_{\nu,\Psi (
\lambda)}$ if $\lambda$ is $p$-regular and $\lambda \geq m(\lambda)$, and
$d_{\nu,\Psi (m(\lambda'))}=d_{\nu,\Psi (\lambda')}$ otherwise. It is zero
if $\lambda$ does not dominate $\nu$ or $\lambda'$ does not dominate
$\nu$. This property is always satisfied, which can be seen using a direct
computation, except in the case where $\nu=(6,5,5,3,3,1)$, $\mu=
(9,6,3,3,1,1)$ and $\lambda= ( 10, 4, 4, 3, 1, 1) $. But we know the
decomposition number  $d_{\nu,\Psi ( \lambda)}$ (see above) in this case,
and we see that it is zero.

    It thus gives a unitriangular basic set for the associated block of
the group $\alt_{23}$. Furthermore, by \cite{CR}, the blocks of the
symmetric group with the same weights are all derived equivalent. The
above result  together with the example studied in \S \ref{exA18} show
that the set of conditions of Theorem \ref{thm:basicGH}  is not an
invariant under this equivalence.

\bigskip
{\bf Acknowledgements.} The first and third authors acknowledge the
support of the ANR grant GeRepMod ANR-16-CE40-0010-01. The third author is also supported by  ANR project JCJC ANR-18-CE40-0001. 
 The authors
sincerely thank Gunter Malle for his precise reading of the paper and his
helpful comments.


\noindent {\bf Addresses}\\

\noindent \textsc{Olivier Brunat},
Institut Math\'ematique de Jussieu - Paris Rive Gauche,
\'{E}quipe : Groupes, repr\'esentations et g\'eom\'etrie,
Case 7012, 75205 Paris Cedex 13,
FRANCE.\\
\emph{olivier.brunat@imj-prg.fr}

\noindent \textsc{Jean-Baptiste Gramain}, Institute of Mathematics, University of Aberdeen, King's College, Fraser Noble
Building, Aberdeen AB24 3UE, UK\\
\emph{jbgramain@abdn.ac.uk}

\noindent \textsc{Nicolas Jacon}, Universit\'e de Reims Champagne-Ardenne, UFR Sciences exactes et naturelles, Laboratoire de Math\'ematiques FRE 2011
Moulin de la Housse BP 1039, 51100 Reims, FRANCE\\  \emph{nicolas.jacon@univ-reims.fr}

\end{document}